\documentclass[12pt,reqno]{amsart}
\usepackage[margin=1in]{geometry}

\newcommand{\EquationLinkColor}{blue}
\newcommand{\CitationLinkColor}{blue}
\newcommand{\URLLinkColor}{blue}

\usepackage[T1]{fontenc}
\usepackage[utf8]{inputenc}
\usepackage{lmodern}
\usepackage{microtype}
\usepackage{xcolor}
\usepackage{booktabs}
\usepackage{float}
\usepackage{amsmath}
\usepackage{amssymb}
\usepackage{amsfonts}
\usepackage{amsthm}
\usepackage{amscd}
\usepackage{mathtools}
\usepackage{hyperref}
\hypersetup{
  colorlinks=true,
  linkcolor=\EquationLinkColor,
  citecolor=\CitationLinkColor,
  urlcolor=\URLLinkColor,
  pdftitle={All-Genus Large-Degree Asymptotics for Gromov-Witten Invariants of the Projective Plane},
  pdfauthor={Shuai Guo; Gang Tian; Tianhang Xu},
  pdfsubject={Large-degree asymptotics of Gromov-Witten invariants of the projective plane},
  pdfkeywords={Gromov-Witten invariants, projective plane, asymptotic expansions, semisimple CohFT, singularity analysis}
}

\allowdisplaybreaks
\theoremstyle{plain}
\newtheorem{theorem}{Theorem}[section]
\newtheorem{proposition}[theorem]{Proposition}
\newtheorem{lemma}[theorem]{Lemma}
\newtheorem{corollary}[theorem]{Corollary}

\theoremstyle{definition}
\newtheorem{definition}[theorem]{Definition}

\theoremstyle{remark}

\newtheorem{remark}[theorem]{Remark}

\DeclareMathOperator{\Aut}{Aut}
\DeclareMathOperator{\Cont}{Cont}
\DeclareMathOperator{\diag}{diag}
\DeclareMathOperator{\ev}{ev}
\DeclareMathOperator{\id}{id}
\DeclareMathOperator{\Li}{Li}
\DeclareMathOperator{\Tr}{Tr}
\DeclareMathOperator{\vvec}{vec}

\newcommand{\CC}{\mathbb{C}}
\newcommand{\E}{\mathcal{E}}
\newcommand{\HH}{\mathcal{H}}
\newcommand{\iu}{\mathbf{i}}
\newcommand{\N}{\mathbb{Z}_{\geq0}}
\newcommand{\one}{\mathbf{1}}
\renewcommand{\P}[1]{\mathbb{P}^{#1}}
\newcommand{\Q}{\mathbb{Q}}
\newcommand{\R}{\mathbb{R}}
\newcommand{\Z}{\mathbb{Z}}
\newcommand{\Mbar}{\overline{\mathcal M}}
\newcommand{\dd}{\mathop{}\!\mathrm{d}}
\newcommand{\abs}[1]{\left\lvert#1\right\rvert}
\newcommand{\lang}[1]{\left\langle#1\right\rangle}

\begin{document}

\title[All-Genus Asymptotics for the Projective Plane]{All-Genus Large-Degree Asymptotics for Gromov--Witten Invariants of the Projective Plane}

\author[S. Guo]{Shuai Guo}
\email{guoshuai@math.pku.edu.cn}
\address{School of Mathematical Sciences, Peking University, Beijing, 100871, China}

\author[G. Tian]{Gang Tian}
\email{gtian@math.pku.edu.cn}
\address{School of Mathematical Sciences, Peking University, Beijing, 100871, China}

\author[T. Xu]{Tianhang Xu}
\email{xth@mail.tsinghua.edu.cn}
\address{Department of Mathematical Sciences, Tsinghua University, Beijing, 100084, China}

\subjclass[2020]{14N35}
\keywords{Gromov--Witten invariants, projective plane, asymptotic expansions, semisimple CohFT, singularity analysis}

\begin{abstract}
This is the first part of a series of papers on the large-degree asymptotics of Gromov--Witten invariants. In this paper, we prove complete large-degree asymptotic expansions, at every fixed genus, for the primary Gromov--Witten invariants of the complex projective plane. The proof uses singularity analysis to transfer the local expansions of generating functions at their dominant singularities to asymptotic expansions of Gromov--Witten invariants. The genus-zero asymptotic expansion is obtained from an analysis of the Witten--Dijkgraaf--Verlinde--Verlinde (WDVV) equation. The higher-genus cases are obtained from the Givental--Teleman reconstruction theorem for semisimple cohomological field theories and from the graph-sum formula for the $R$-matrix action. 
\end{abstract}

\maketitle

\section*{Introduction}\label{p2:sec:introduction}

The Gromov--Witten (GW) invariants arise from Gromov's compactness theory for pseudo-holomorphic curves \cite{Pseudo_holomorphic_curves_in_symplectic_manifolds} and from the mathematical formulation of quantum cohomology. Early symplectic treatments of genus-zero Gromov--Witten theory were given by Ruan--Tian \cite{A_mathematical_theory_of_quantum_cohomology} and McDuff--Salamon \cite{J-holomorphic_curves_and_quantum_cohomology}. Ruan--Tian subsequently developed the higher-genus theory in the semipositive setting \cite{Higher_genus_symplectic_invariants_and_sigma_model_coupled_with_gravity}. Several approaches to constructing Gromov--Witten invariants for general symplectic manifolds were developed by Li--Tian \cite{Virtual_moduli_cycles_and_Gromov-Witten_invariants_of_general_symplectic_manifolds}, Fukaya--Ono \cite{Arnold_conjecture_and_Gromov-Witten_invariant}, and Siebert \cite{Gromov-Witten_invariants_of_general_symplectic_manifolds}. In algebraic geometry, the virtual fundamental class is constructed by Li--Tian \cite{Virtual_moduli_cycles_and_Gromov-Witten_invariants_of_algebraic_varieties} and Behrend--Fantechi \cite{The_intrinsic_normal_cone}. Gromov--Witten theory connects intersection theory on moduli spaces, quantum cohomology, enumerative geometry, and mirror symmetry \cite{Mirror_symmetry,Mirror_symmetry_and_algebraic_geometry}.

Let $X$ be a smooth complex projective variety and $\beta\in H_2(X,\Z)$. A stable map of type $(g,n,\beta)$ is a tuple $(C,p_1,\ldots,p_n,f)$ in which $C$ is a connected nodal projective curve of arithmetic genus $g$, the $p_i$ are distinct smooth marked points, $f:C\to X$ satisfies $f_*[C]=\beta$, and the automorphism group of the tuple is finite. For projective $X$, the stable maps form a proper Deligne--Mumford stack $\Mbar_{g,n}(X,\beta)$. Its perfect obstruction theory gives a virtual fundamental class of complex dimension
\begin{equation}\label{p2:eq:general-virtual-dimension}
 (3-\dim X)(g-1)+\int_\beta c_1(T_X)+n.
\end{equation}
Define the evaluation maps $\ev_i(C,p_1,\ldots,p_n,f)=f(p_i)$. For $v_i\in H^{2q_i}(X,\Q)$, the primary Gromov--Witten invariant is defined as 
\begin{equation}\label{p2:eq:general-primary-gw}
 \lang{v_1,\ldots,v_n}_{g,n,\beta}
 :=
 \int_{[\Mbar_{g,n}(X,\beta)]^{\mathrm{vir}}}
 \prod_{i=1}^n\ev_i^*(v_i).
\end{equation}
It can be nonzero only when $\beta$ is effective and
\begin{equation}\label{p2:eq:general-dimension-constraint}
 \sum_{i=1}^nq_i
 =
 (3-\dim X)(g-1)+\int_\beta c_1(T_X)+n.
\end{equation}
The Gromov--Witten invariants above are primary, i.e., they contain no $\psi$-classes on the moduli space of stable maps.

We now specialize to $X=\P{2}$. Let $H\in H^2(\P{2},\Z)$ be the hyperplane class and let $[\ell]\in H_2(\P{2},\Z)$ be the line class. For $\beta=d[\ell]$, the virtual dimension in \eqref{p2:eq:general-virtual-dimension} is $g-1+3d+n$. For $g\geq0$ and $d\geq1$, define
\begin{equation}\label{p2:eq:invariant}
  N_{g,d}
  :=
  \lang{H^2,\ldots,H^2}_{g,3d-1+g,d[\ell]}.
\end{equation}
By the enumerative interpretation proved in \cite[Corollary~5.2]{Intersection_theory_on_M_14_and_elliptic_Gromov-Witten_invariants}, the GW invariant $N_{g,d}$ equals the number of irreducible plane curves of geometric genus $g$ and degree $d$ through $3d-1+g$ generic points. In the statement of that corollary, ``arithmetic genus'' should read ``geometric genus'': the proof uses the geometric genus, whereas an irreducible degree-$d$ plane curve has fixed arithmetic genus $(d-1)(d-2)/2$. Our purpose is to determine the asymptotic behavior of $N_{g,d}$ as $d\to\infty$ for every fixed genus $g$.

For genus-zero GW invariants $N_{0,d}$, the Witten--Dijkgraaf--Verlinde--Verlinde (WDVV) equation yields Kontsevich's recursion
\begin{equation}\label{p2:eq:kontsevich-recursion}
\begin{aligned}
 N_{0,d}
 =
 \sum_{\substack{d_1+d_2=d\\d_1,d_2\geq1}}
 N_{0,d_1}N_{0,d_2}
 \left[
 d_1^2d_2^2\binom{3d-4}{3d_1-2}
 -
 d_1^3d_2\binom{3d-4}{3d_1-1}
 \right],
 \qquad d\geq2,
\end{aligned}
\end{equation}
with $N_{0,1}=1$. Thus all $N_{0,d}$ can be recursively computed. For example,  
\[
 N_{0,1}=1,\quad N_{0,2}=1,\quad N_{0,3}=12,\quad
 N_{0,4}=620,\quad N_{0,5}=87304,\quad
 N_{0,6}=26312976,
\]
and so on. Higher genus GW invariants $N_{g,d}$ also satisfy recursive formulas, but we study their generating functions rather than their recursions directly. Our approach studies singularities of the fixed-genus generating functions and reconstructs the higher-genus generating functions from genus-zero data.

For the large-degree asymptotics of $\P{2}$, Di Francesco and Itzykson conjectured the fixed-genus leading growth of the normalized GW invariants in \cite[Footnote~2]{Quantum_intersection_rings} and claimed the genus-zero leading term in \cite[Proposition~3]{Quantum_intersection_rings}. Zinger later identified the gaps in their original genus-zero argument \cite[Section~1]{On_asymptotic_behavior_of_GW_invariants} and surveyed its resolution \cite[Sections~2.1 and~3]{Some_conjectures_on_the_asymptotic_behavior_of_Gromov-Witten_invariants}. Tian and Wei \cite[Theorems~1.1 and~1.2]{Asymptotic_of_enumerative_invariants_in_CP2} proved complete expansions in genera zero and one by analyzing the dominant singularity of the genus-zero generating function. Guzzetti \cite{The_singularity_of_Kontsevichs_solution_for_QHCP2} obtained related analytic information on the quantum-cohomological solution near the discriminant. We extend the results of Tian and Wei by proving a complete large-degree asymptotic expansion for each fixed genus $g\geq 2$.

\subsection{Main theorem}

We now state our main theorem. The first two cases are proved by Tian and Wei in \cite[Theorem~1.1]{Asymptotic_of_enumerative_invariants_in_CP2} and \cite[Theorem~1.2]{Asymptotic_of_enumerative_invariants_in_CP2}. Our present paper proves the corresponding asymptotic expansions for every fixed genus $g\geq 2$. Thus the theorem below completes the fixed-genus large-degree asymptotic picture for $\P{2}$. 

Throughout the paper, the imaginary unit is denoted by $\iu=\sqrt{-1}$.

\begin{theorem}\label{p2:thm:main}
There is a real number $x_0$ with the following properties.
\begin{enumerate}
\item There are constants $a_{0,k}$, $k\geq0$, with $a_{0,0}>0$, such that for every $N\geq1$,
\begin{equation}\label{p2:eq:g0-main}
  \frac{N_{0,d}}{(3d-1)!}
  =
  e^{-dx_0}d^{-7/2}
  \left(
    \sum_{k=0}^{N-1}a_{0,k}d^{-k}
    +
    O(d^{-N})
  \right).
\end{equation}
\item There are constants $a_{1,k}$, $k\geq1$, such that for every $N\geq1$,
\begin{equation}\label{p2:eq:g1-main}
  \frac{N_{1,d}}{(3d)!}
  =
  e^{-dx_0}d^{-1}
  \left(
    \frac1{48}
    +
    \sum_{k=1}^{N-1}a_{1,k}d^{-k+1/2}
    +
    O(d^{-N+1/2})
  \right).
\end{equation}
\item Fix $g\geq2$. There are constants $a_{g,k}$, $k\geq0$, such that for every $N\geq1$,
\begin{equation}\label{p2:eq:hg-main}
  \frac{N_{g,d}}{(3d-1+g)!}
  =
  e^{-dx_0}d^{(5g-7)/2}
  \left(
    \sum_{k=0}^{N-1}a_{g,k}d^{-k/2}
    +
    O(d^{-N/2})
  \right).
\end{equation}
Its leading coefficient is
\begin{equation}\label{p2:eq:hg-leading}
  a_{g,0}
  =
  \frac{\lang{\tau_2^{3g-3}}_g}
  {(3g-3)!\,
   (-15\iu a_{5})^{g-1}\,
   \Gamma\bigl((5g-5)/2\bigr)}
  >0,
\end{equation}
where
\begin{equation}\label{p2:eq:psi-integral}
  \lang{\tau_2^{3g-3}}_g
  :=
  \int_{\Mbar_{g,3g-3}}
  \psi_1^2\cdots\psi_{3g-3}^2
\end{equation}
and $a_5\in\iu\R_{>0}$ is the constant defined in \eqref{p2:eq:a5}; here $\Gamma$ denotes Euler's gamma function. Moreover,
\begin{equation}\label{p2:eq:hg-vanishing}
 a_{g,k}=0
 \quad\text{whenever}\quad
 \frac{5g-7-k}{2}\in\Z_{\leq-1}.
\end{equation}
\end{enumerate}
Consequently,
\begin{equation}\label{p2:eq:root-limit}
  \lim_{d\to\infty}
  \sqrt[d]{\frac{N_{g,d}}{(3d-1+g)!}}
  =
  e^{-x_0}
\end{equation}
for every fixed $g\geq0$.
\end{theorem}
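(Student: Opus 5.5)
We work with the fixed-genus generating functions in the divisor variable $t$ (the coefficient of $H$). Up to classical terms, set
\[
F_g(t)=\sum_{d\ge1}\frac{N_{g,d}}{(3d-1+g)!}\,e^{dt},
\]
so that after normalizing by the factorial, $N_{g,d}/(3d-1+g)!$ is exactly the $d$-th Taylor coefficient in $e^{t}$. The asymptotics then come from singularity analysis (Flajolet--Odlyzko transfer theorems) in the variable $z=e^{t}$. The real number $x_0$ is defined by requiring $z_0=e^{-x_0}$ to be the radius of convergence. The proof has three tasks:
\begin{enumerate}
\item show that $z_0$ is the \emph{unique} dominant singularity on $|z|=z_0$;
\item show that each $F_g$ continues analytically to a $\Delta$-domain at $z_0$;
\item compute the local Puiseux-type expansion of $F_g$ in $\sqrt{z_0-z}$ at $z_0$.
\end{enumerate}

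\textbf{Genus zero.} We will follow Tian--Wei and use the WDVV equation for $\P{2}$ in the form of the ODE $f'''=6+2f''f'''-... $ satisfied by $f=F_0$ (Kontsevich's recursion in differential form). We will show three things:
\begin{enumerate}
\item the singularity at $z_0$ is a square-root type branch point of the second derivative, where the discriminant of the semisimple quantum product degenerates (two canonical coordinates collide);
\item there are no other singularities on the circle, using positivity of the coefficients and aperiodicity;
\item $F_0$ extends to a $\Delta$-domain by analytic continuation of the ODE.
\end{enumerate}
The local expansion of $F_0$ is $\sum c_j(z_0-z)^{j/2}$, with leading singular exponent $5/2$. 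This exponent yields $d^{-7/2}$ in \eqref{p2:eq:g0-main}, and the constant $a_5$ of \eqref{p2:eq:a5} appears as the coefficient of this term. Integer powers contribute nothing asymptotically, so only half-integer powers survive, which gives expansions in $d^{-k}$. Genus one follows from the Dubrovin--Zhang/Getzler formula, which expresses $F_1$ through $\log\det$ of derivatives of canonical coordinates. This produces a $\log(z_0-z)$ term, giving the $\frac1{48}d^{-1}$ leading term.

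\textbf{Higher genus.} We invoke the Givental--Teleman reconstruction: the CohFT is $R\cdot T\cdot\omega$, and $F_g$ is a finite graph sum over stable graphs. Vertices carry $\psi$-integrals on $\Mbar_{g_v,n_v}$. Edges and legs carry the $R$-matrix series in the normalized canonical frame, $\Psi$, and the dilaton shift. All of these are algebraic (or polynomial) in $u_i$, $\Psi$, $R_k$, and $\Delta_i^{-1/2}$. The key structural point is that near $z_0$ one of the $\Delta_i^{-1}$ (or $u_i-u_j$) blows up like $(z_0-z)^{-1/2}$. We will show that each $R_k$ has a pole of order at most $k$ in $\sqrt{z_0-z}$ there; this follows from the flatness equation $[R_{k+1},dU]=(d+\Psi^{-1}d\Psi)R_k$ together with the square-root local structure from genus zero.

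Counting orders of singularity in the graph sum, the most singular contribution for fixed $g$ comes from the graph with a single genus-$g$ vertex and $3g-3$ insertions of the degenerate idempotent's $T$-shift. This gives $F_g\sim C_g\,(z_0-z)^{-(5g-5)/2}$ with
\[
C_g=\frac{\lang{\tau_2^{3g-3}}_g}{(3g-3)!}\,(-15\iu a_5)^{1-g}.
\]
Transfer then gives the factor $d^{(5g-7)/2}/\Gamma((5g-5)/2)$ in \eqref{p2:eq:hg-leading}. The vanishing statement \eqref{p2:eq:hg-vanishing} reflects that when the exponent $-(5g-5)/2+k/2$ is a nonnegative integer, the term is analytic and $1/\Gamma$ vanishes. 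The $\Delta$-analyticity of $F_g$ is inherited from genus zero, since the graph sum is a polynomial in finitely many genus-zero quantities. Positivity of $a_{g,0}$ follows from $\lang{\tau_2^{3g-3}}_g>0$ and $-15\iu a_5>0$. Finally, \eqref{p2:eq:root-limit} follows directly from the three expansions.

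\textbf{Main obstacle.} The hardest step will be the uniform bound on the order of the pole of $R_k$ at $z_0$, and with it the order-counting showing that no other graph matches or exceeds the leading singularity. This is delicate because the canonical frame itself degenerates at $z_0$. We would first try to work in a frame adapted to the colliding pair of idempotents, where the local model is the $A_2$ (Airy) singularity. There $R$ is known explicitly and has coefficients of exact pole order $k$.
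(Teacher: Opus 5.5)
The genus-zero part of your plan matches the paper: the WDVV ODE, positivity with Pringsheim, and strict inequality on the boundary circle. The gap is in the higher-genus argument. The local picture at $x_0$ on which you base it is wrong, so the step you flag as the main obstacle cannot be carried out as proposed.

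\textbf{The local picture at $x_0$.} The canonical coordinates do \emph{not} collide at $x_0$. On the slice $S$ the discriminant satisfies $\mathcal D'=2(2F_0''+27)\mathcal D/\mathcal Y$. Since $\mathcal Y=27+2F_0'-3F_0''=-\frac{45}{4}a_5\zeta^{1/2}+O(\zeta)$, the logarithmic derivative is integrable, so $\mathcal D(x_0)\neq0$.

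The square-root singularity comes instead from the vanishing of the cyclic determinant $\mathcal Y$, the leading coefficient of the scalar WDVV equation, while $\mathcal X(x_0)>0$. By $\mathcal D=\det(\eta)\,\mathcal U\,\mathcal Y^2$, one $\eta(\partial_{u_i},\partial_{u_i})$ tends to $0$ like $\zeta$; in Givental's normalization, $\Delta_i=1/\eta(\partial_{u_i},\partial_{u_i})\sim\zeta^{-1}$. This is a fold of the canonical-to-flat change of coordinates, not an eigenvalue collision.

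\textbf{Why your bound on $R_k$ is insufficient.} The decisive consequence of $\mathcal D(x_0)\neq0$ concerns the linear system for $R_{n+1}$ given by $[R_{n+1},\E]=(n\cdot\id+\mu)R_n$. Its determinant $-(n+1)^3\mathcal D^2$ is nonzero at $x_0$, so every $R_n$ is \emph{bounded} in the flat basis.

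The whole singularity therefore sits in the TFT. With $\mathbf v=3\cdot\one+H$, one has $\mathbf p*\mathbf q=\frac{15a_5}{8}\zeta^{-1/2}\eta(\mathbf v,\mathbf p)\eta(\mathbf v,\mathbf q)\mathbf v+O(1)$, so each trivalent vertex costs exactly $\zeta^{-1/2}$. This gives the bound $O(\zeta^{-(5g-5-\abs{E(\Gamma)})/2})$ per graph and isolates the one-vertex graph with $3g-3$ insertions of $\mathsf T_2=R_1\one$.

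Your bound ``$R_k$ has a pole of order at most $k$'' cannot do this. Combined with the vertex blow-up, it only bounds the one-vertex graph by $O(\zeta^{-4(g-1)})$. If instead, as in your Airy model, the TFT stayed bounded with $R_k\sim\zeta^{-k/2}$, then in every term of every graph the $R$-indices add up to exactly $3g-3$. All graphs would then contribute at the same order, and $\lang{\tau_2^{3g-3}}_g$ would not be the leading coefficient.

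\textbf{Analytic continuation of $F_g$.} The graph sum is not polynomial in genus-zero data. It is rational in $F_0,F_0',F_0''$, with denominators powers of $\mathcal Y$ and $\mathcal D$, and the radius of convergence of $F_g$ is not known a priori. The $\mathfrak D$-continuation therefore needs $\mathcal D\neq0$ on the whole half-strip $\{\Re z\leq x_0,\ 0\leq\Im z\leq2\pi\}$. The paper gets this by integrating the same propagation identity along paths avoiding zeros of $\mathcal Y$, which your collision premise actually contradicts. It also needs an argument at $q=e^z=0$, where $\mathcal D=-3^9q^2+O(q^3)$.

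\textbf{The constant $C_g$.} You assert $C_g$ rather than derive it. Deriving it requires the nonvanishing singular component $\eta(\mathbf v,R_1\one)|_{x_0}=3/\mathcal X(x_0)=-32/(225a_5^2)$, read off from the explicit $R_1|_S$. Then $\eta((R_1\one)^{*(3g-3)},\Delta^{*g})=(-15a_5)^{1-g}\zeta^{-(5g-5)/2}+O(\zeta^{-(5g-6)/2})$ in $\zeta=z-x_0$. The factor $\iu^{-(5g-5)}$ from the transfer turns $(-15a_5)^{1-g}$ into $(-15\iu a_5)^{1-g}$. Note also that the radius of convergence in $e^t$ is $e^{x_0}$, not $e^{-x_0}$.

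\textbf{Genus one.} Your Dubrovin--Zhang route is a legitimate alternative to the paper's explicit graph-sum formula $F_1'=\frac18+\frac{F_0'''-27}{8\mathcal Y}$. But there too the $\frac1{48}$ comes from the fold, not a collision: $\det(\partial t_\alpha/\partial u_i)^2=\mathcal Y^2/\mathcal D$ vanishes like $\zeta$ while the $u_i$ stay distinct.
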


The constant $x_0$ is the common abscissa of convergence of the series
\begin{equation}\label{p2:eq:Fg-definition}
  F_g(z) := \sum_{d\geq1}
  \frac{N_{g,d}}{(3d-1+g)!}e^{dz}.
\end{equation}
Numerically, $x_0\approx 1.9804$ and $a_{5}\approx \iu\cdot 5.7057$.

Our proof has three steps. First, the WDVV equation reduces to an analytic differential equation satisfied by the genus-zero generating function. Its solution reaches a singularity at a finite real value $x_0$, near which it has a locally convergent square-root expansion. Second, the Givental--Teleman reconstruction theorem for semisimple homogeneous cohomological field theories \cite{Gromov-Witten_invariants_and_quantization_of_quadratic_hamiltonians,The_structure_of_2D_semi-simple_field_theories} expresses the higher-genus potentials as stable-graph sums via the $R$-matrix action, which can be used to compute the Laurent--Puiseux expansions at the dominant singularity $x_0$ of higher-genus generating functions. Third, local singularity analysis transfers the Laurent--Puiseux expansions near $x_0$ to asymptotics of the GW invariants.

Parts of this strategy may extend to other Fano varieties, including $\P{3}$, and del Pezzo surfaces such as $\P{1}\times\P{1}$ and the blow-ups of $\P{2}$ at $1\le n \le 8$ points in general position. These cases will be investigated in subsequent papers in this series.

\subsection{Plan of the paper}

This paper is organized as follows. Section~\ref{p2:sec:framework} introduces the necessary background and tools, including the Givental--Teleman reconstruction and the transfer theorem in singularity analysis. Section~\ref{p2:sec:g0} establishes the genus-zero result. Sections~\ref{p2:sec:g1} and~\ref{p2:sec:hg} reconstruct the remaining genera and determine explicitly the leading terms of their asymptotic expansions.

\section{Reconstruction and coefficient transfer}
\label{p2:sec:framework}

This section introduces the tools used in our asymptotic analysis of GW invariants in every genus. The genus-zero part is governed by the Frobenius manifold and the WDVV equations. The Gromov--Witten classes form a cohomological field theory (CohFT) in the sense of \cite{Gromov-Witten_classes_quantum_cohomology_and_enumerative_geometry}. On the semisimple locus, the higher-genus theory is recovered from its genus-zero data by the Givental--Teleman reconstruction theorem for homogeneous semisimple CohFTs \cite{Gromov-Witten_invariants_and_quantization_of_quadratic_hamiltonians,The_structure_of_2D_semi-simple_field_theories}, and the resulting $R$-matrix action is expressed as a stable-graph sum \cite[Definitions~2.2 and~2.13]{Relations_on_Mgn_via_3-spin_structures}. Finally, we record the coefficient-transfer theorem from singularity analysis.

\subsection{Frobenius manifolds and the WDVV equations}

\begin{definition}[{\cite[Definition~1.2]{Geometry_of_2D_topological_field_theories}}]\label{p2:def:frobenius}
A complex analytic manifold $M$ is a Frobenius manifold if it is equipped with the following data.
\begin{enumerate}
\item It carries a flat, nondegenerate, symmetric bilinear form $\eta$ on the tangent bundle, called the Frobenius metric.
\item Every tangent space is equipped with a commutative, associative multiplication $*$, called the Frobenius multiplication, which depends analytically on the point, has unit $e$, and satisfies
\[
 \eta(u*v,w)=\eta(u,v*w).
\]
If the resulting algebra is semisimple on a dense open set, then $M$ is called semisimple.
\item The unit $e$ is flat for the Levi--Civita connection: $\nabla e=0$.
\item If $c(u,v,w)=\eta(u*v,w)$, then $\nabla_zc(u,v,w)$ is symmetric in $u,v,w,z$.
\item There are an Euler vector field $E$ and a number $\delta$, called the conformal dimension, such that
\[
 \nabla\nabla E=0,\qquad
 \mathcal L_E\eta=(2-\delta)\eta,\qquad
 \mathcal L_E(*)=*.
\]
\end{enumerate}
\end{definition}

In this paper, we restrict attention to Frobenius manifolds for which $\nabla E$ is diagonalizable. Choose local flat coordinates $t_0,\ldots,t_r$ for which $e=\partial_{t_0}$. They may be chosen so that
\begin{equation}\label{p2:eq:general-euler-field}
 E
 =
 \sum_{\alpha=0}^{r}
 \bigl((1-q_\alpha)t_\alpha+\rho_\alpha\bigr)
 \partial_{t_\alpha},
 \qquad
 q_0=0,
\end{equation}
where $\rho_\alpha\neq0$ only when $q_\alpha=1$. Locally, the Frobenius structure is determined by a prepotential $F(t_0,\ldots,t_r)$. Denote
\[
 F_{\alpha_1\cdots\alpha_k}
 :=
 \partial_{t_{\alpha_1}}\cdots
 \partial_{t_{\alpha_k}}F,
 \qquad
 (\eta^{\alpha\beta}):=(\eta_{\alpha\beta})^{-1}.
\]
The Witten--Dijkgraaf--Verlinde--Verlinde (WDVV) equations are third-order partial differential equations satisfied by the prepotential $F(t_0,\ldots,t_r)$:
\begin{equation}\label{p2:eq:wdvv}
  \sum_{\lambda,\nu}
  F_{\alpha\beta\lambda}\eta^{\lambda\nu}
  F_{\nu\gamma\delta}
  =
  \sum_{\lambda,\nu}
  F_{\alpha\gamma\lambda}\eta^{\lambda\nu}
  F_{\nu\beta\delta},
\end{equation}
where $\eta_{\alpha\beta}=F_{0\alpha\beta}$ is constant and nondegenerate. The WDVV equations are equivalent to the associativity of Frobenius multiplication 
\begin{equation}\label{p2:eq:frobenius-product}
  \partial_{t_\alpha}*\partial_{t_\beta}
  =
  \sum_{\gamma,\nu}
  F_{\alpha\beta\gamma}\eta^{\gamma\nu}
  \partial_{t_\nu}.
\end{equation}
The Euler vector field gives the quasi-homogeneity relation
\begin{equation}\label{p2:eq:prepotential-homogeneity}
 E(F)=(3-\delta)F+\text{a polynomial of degree at most two}.
\end{equation}
Conversely, the prepotential $F$ satisfying \eqref{p2:eq:wdvv}, the constant metric $\eta_{\alpha\beta}=F_{0\alpha\beta}$, the unit $e=\partial_{t_0}$, and \eqref{p2:eq:prepotential-homogeneity} determine the local Frobenius structure.

For a smooth projective variety $X$ with $H^{\mathrm{odd}}(X,\CC)=0$, take $\HH=H^*(X,\CC)$ with the Poincar\'e pairing $\eta(u,v) := \int_X u\cup v$. Choose a homogeneous basis
\[
 T_0=\one,T_1,\ldots,T_r,
 \qquad
 T_i\in H^{2q_i}(X,\CC),
\]
where $\one$ denotes the multiplicative identity. Suppose $T_1,\ldots,T_m$ are divisor classes. Let $Q^\beta$ denote the Novikov monomial of $\beta$, and understand every curve-class sum in the Novikov completion. If $t=\sum_it_iT_i$, then the genus-$g$ Gromov--Witten potential is
\begin{equation}\label{p2:eq:general-gw-potential}
 F_g^X(t)
 :=
 \sum_{n\geq0}\sum_\beta
 \frac1{n!}\lang{t,\ldots,t}_{g,n,\beta}Q^\beta.
\end{equation}
The degree-zero, fundamental-class, and divisor axioms give
\begin{equation}\label{p2:eq:genus-zero-divisor-form}
\begin{aligned}
 F_0^X(t)
 ={}&
 \frac16\int_Xt^3 +
 \sum_{\beta\neq0}
 Q^\beta
 \exp\left(\int_\beta\sum_{i=1}^{m}t_iT_i\right)
 \sum_{n\geq0}\frac1{n!}
 \left\langle
 \sum_{i=m+1}^{r}t_iT_i,\ldots,
 \sum_{i=m+1}^{r}t_iT_i
 \right\rangle_{0,n,\beta}.
\end{aligned}
\end{equation}
The WDVV equation for $F_0^X$ is precisely associativity of the big quantum product. Its unit is $\one$, its flat metric is the Poincar\'e pairing, and its Euler vector field is
\begin{equation}\label{p2:eq:projective-euler-field}
 E
 =
 \sum_{i=0}^{r}(1-q_i)t_i\partial_{t_i}
 +
 c_1(T_X),
\end{equation}
where a cohomology class is identified with the corresponding constant flat vector field. Thus the quantum cohomology of $X$, as a formal germ or on an analytic convergence domain, is a Frobenius manifold of conformal dimension $\dim X$.

Let $\E=E*$ denote multiplication by the Euler vector field, both as an endomorphism and as its matrix in the flat basis $\{\partial_{t_0},\ldots,\partial_{t_r}\}$. Let
\begin{equation}\label{p2:eq:hodge-grading}
 \mu
 :=
 \left(1-\frac{\delta}{2}\right)\id-\nabla E
\end{equation}
be the Hodge grading operator. In the flat basis, $\mu=\diag(q_0-\delta/2,\ldots,q_r-\delta/2)$. Differentiating \eqref{p2:eq:prepotential-homogeneity} gives the $(\alpha,\beta)$-entry of the matrix $\E$ in the flat basis: 
\begin{equation}\label{p2:eq:euler-entries}
 \E^\alpha_{ \beta}
 =
 \sum_\gamma
 \eta^{\alpha\gamma}
 (q_\beta+q_\gamma+1-\delta)F_{\beta\gamma}
 +
 \rho^\alpha_{ \beta},
\end{equation}
where $F_{\beta\gamma}$ is a second derivative of $F$ and $\rho^\alpha_{ \beta}$ is the constant term produced by quasi-homogeneity. The WDVV equations give
\begin{equation}\label{p2:eq:flat-euler-derivative}
 \partial_{t_\alpha}\E
 =
 (\partial_{t_\alpha}*)+
 [\partial_{t_\alpha}*,\mu].
\end{equation}
Therefore, 
\begin{equation}\label{p2:eq:euler-wdvv}
  E^{*k}(\E)=\E^k+[\E^k,\mu],
  \qquad k\geq0.
\end{equation}
Here $E^{*k}(\E)$ denotes the directional derivative of the matrix-valued function $\E$ along the Frobenius power $E^{*k}$, whereas $\E^k$ denotes the ordinary matrix power.

Define the cyclic transition matrix by
\begin{equation}\label{p2:eq:Y-definition}
  (\one,E,E^{*2},\ldots,E^{*r})
  =
  (\partial_{t_0},\ldots,\partial_{t_r})Y,
  \qquad
  \mathcal Y:=\det Y.
\end{equation}
The $(\alpha,k)$-entry of $Y$ is $(\E^k)^\alpha_{ 0}$. On $\mathcal Y\neq0$, the Euler powers form a basis and
\begin{equation}\label{p2:eq:euler-wdvv-flat-system}
 \partial_{t_\gamma}\E^\alpha_{ \beta}
 =
 \sum_{k=0}^{r}
 \bigl((Y^{\mathsf T})^{-1}\bigr)_{\gamma k}
 \bigl(\E^k+[\E^k,\mu]\bigr)^\alpha_{ \beta}.
\end{equation}
Thus the WDVV equation and homogeneity give a closed first-order rational system for the flat-basis entries of $\E$, with $\mathcal Y$ as its only explicit denominator.

\subsection{Cohomological field theories}

A Frobenius manifold records only genus-zero data. To organize the higher-genus part, we recall the notion of a cohomological field theory (CohFT).

\begin{definition}\label{p2:def:cohft}
Let $\HH$ be a finite-dimensional complex vector space with a nondegenerate symmetric bilinear form $\eta$ and a distinguished vector $\one$. A cohomological field theory with unit is a collection
\[
 \Omega_{g,n}
 \in
 H^*(\Mbar_{g,n})\otimes(\HH^*)^{\otimes n},
 \qquad
 2g-2+n>0,
\]
satisfying the following axioms.
\begin{enumerate}
\item $\Omega_{g,n}$ is invariant under the simultaneous action of $S_n$ on the marked points and tensor factors.
\item For the gluing maps
\[
 q:\Mbar_{g-1,n+2}\longrightarrow\Mbar_{g,n},
\]
\[
 r:\Mbar_{g_1,n_1+1}\times\Mbar_{g_2,n_2+1}
 \longrightarrow\Mbar_{g,n},
\]
the pullbacks of $\Omega$ are obtained by contracting the two new factors with $\eta^{-1}=\sum_\alpha e_\alpha\otimes e^\alpha$, where $\{e_\alpha\}$ is any basis and $\{e^\alpha\}$ is its $\eta$-dual. Explicitly, for $v_1,\ldots,v_n\in\HH$,
\[
 q^*\Omega_{g,n}(v_1,\ldots,v_n)
 =
 \sum_\alpha
 \Omega_{g-1,n+2}(v_1,\ldots,v_n,e_\alpha,e^\alpha).
\]
For a stable splitting $I\sqcup J=\{1,\ldots,n\}$, $g_1+g_2=g$, $n_1=|I|$, and $n_2=|J|$, one has
\[
\begin{aligned}
 r^*\Omega_{g,n}(v_1,\ldots,v_n)
 ={}&
 \sum_\alpha
 \Omega_{g_1,n_1+1}(v_I,e_\alpha) \otimes
 \Omega_{g_2,n_2+1}(v_J,e^\alpha),
\end{aligned}
\]
where $v_I$ and $v_J$ are written in the induced order.
\item If $p:\Mbar_{g,n+1}\to\Mbar_{g,n}$ forgets the last marking, then
\[
 \Omega_{g,n+1}(v_1,\ldots,v_n,\one)
 =
 p^*\Omega_{g,n}(v_1,\ldots,v_n),
\]
and
\[
 \Omega_{0,3}(v_1,v_2,\one)=\eta(v_1,v_2).
\]
\end{enumerate}
\end{definition}

The three-point tensor defines a Frobenius algebra by
\begin{equation}\label{p2:eq:cohft-product}
 \eta(v_1*v_2,v_3)=\Omega_{0,3}(v_1,v_2,v_3).
\end{equation}
For $t\in\HH$, the shifted CohFT is
\begin{equation}\label{p2:eq:shifted-cohft}
 \Omega^t_{g,n}(v_1,\ldots,v_n)
 :=
 \sum_{m\geq0}\frac1{m!}
 (p_m)_*
 \Omega_{g,n+m}(v_1,\ldots,v_n,t,\ldots,t),
\end{equation}
where $p_m$ forgets the last $m$ markings and stabilizes. The sum is understood in the $t$-adic and Novikov completion, or analytically on a domain where the shifted potential converges. The three-point part of the shifted family $\Omega^t$ is the Frobenius-manifold product. $\Omega^t$ is semisimple if the Frobenius algebra is semisimple. The CohFT is homogeneous if it is compatible with an Euler vector field in the sense of \cite{The_structure_of_2D_semi-simple_field_theories}.

A topological field theory (TFT) is a CohFT taking values in $H^0(\Mbar_{g,n})$. Its values are determined by the Frobenius algebra. If $\{e_\alpha\}$ is any basis and $\{e^\alpha\}$ is its 
$\eta$-dual, define the handle element
\[
 \Delta:=\sum_\alpha e_\alpha*e^\alpha.
\]
Then
\begin{equation}\label{p2:eq:TFT}
  \omega_{g,n}(v_1,\ldots,v_n)
  =
  \eta(v_1*\cdots*v_n,\Delta^{*g}).
\end{equation}
Indeed, consider the pull-back to a maximally nodal stable curve. Repeated separating gluings reduce every genus-zero component to $\eta(v_1*\cdots*v_m,\one)$, and each nonseparating node inserts $\sum_\alpha e_\alpha\otimes e^\alpha$, hence multiplies by $\Delta$.

For a smooth projective variety $X$, let $\pi:\Mbar_{g,n}(X,\beta)\to\Mbar_{g,n}$ be the stabilization and let $\ev=(\ev_1,\ldots,\ev_n)$ be the product of evaluation maps. Write $\operatorname{PD}$ for Poincar\'e duality. The Gromov--Witten class is
\begin{equation}\label{p2:eq:gw-cohft-class}
\begin{aligned}
 I_{g,n,\beta}(v_1,\ldots,v_n)
 &:=
 \operatorname{PD}\!\left[
 \pi_*\left(
 \ev^*(v_1\otimes\cdots\otimes v_n)
 \cap[\Mbar_{g,n}(X,\beta)]^{\mathrm{vir}}
 \right)
 \right],\\
 \Omega_{g,n}
 &:=
 \sum_\beta Q^\beta I_{g,n,\beta}.
\end{aligned}
\end{equation}
These classes form a CohFT\@. Its shifted classes recover the primary potentials:
\begin{align}
 F_g^X(t)
 &=
 \int_{\Mbar_{g,0}}\Omega^t_{g,0},
 \qquad g\geq2,
 \label{p2:eq:potential-as-shifted-cohft}\\
 \partial_{t_\alpha}F_1^X(t)
 &=
 \int_{\Mbar_{1,1}}
 \Omega^t_{1,1}(\partial_{t_\alpha}).
 \label{p2:eq:F1-as-shifted-cohft}
\end{align}

For the Gromov--Witten CohFT, the dimension and divisor axioms give homogeneity with respect to the Euler vector field \eqref{p2:eq:projective-euler-field}. For each fixed $t$, put
\[
 \omega_{g,n}
 :=
 \bigl[\Omega^t_{g,n}\bigr]_{H^0(\Mbar_{g,n})}
\]
for the topological part of the shifted theory. We suppress its dependence on $t$: this is the TFT associated with the Frobenius algebra at $t$, not the $t$-shift of a fixed TFT\@. Teleman's theorem \cite{The_structure_of_2D_semi-simple_field_theories} gives, at every semisimple point $t$,
\begin{equation}\label{p2:eq:teleman-applied}
 \Omega^t=R_t\mathbin{.}\omega,
\end{equation}
where $R_t$ is the unique $R$-matrix compatible with homogeneity, characterized by the recursion \eqref{p2:eq:R-recursion} below. The notation $R_t\mathbin{.}\omega$ denotes the unit-preserving Givental action, including the translation defined in \eqref{p2:eq:translation}. Next, we recall the $R$-matrix action on a CohFT in the form of a stable-graph sum.

\subsection{The \texorpdfstring{$R$}{R}-matrix graph sum}

We use Givental's $R$-matrix action
\cite{Gromov-Witten_invariants_and_quantization_of_quadratic_hamiltonians} in its unit-preserving CohFT form \cite[Definition~2.13]{Relations_on_Mgn_via_3-spin_structures}. An $R$-matrix is a series
\[
 R(\xi)=R_0+R_1\xi+R_2\xi^2+\cdots,
 \qquad R_0=\id,
\]
in $\operatorname{End}(\HH)[[\xi]]$, satisfying
\begin{equation}\label{p2:eq:R-symplectic}
 R(\xi)R^*(-\xi)=\id.
\end{equation}
The adjoint is defined by $\eta(Av,w)=\eta(v,A^*w)$. Consequently, 
\begin{equation}\label{p2:eq:R-inverse}
 R^{-1}(\xi)
 =
 R^*(-\xi)
 =
 \sum_{m\geq0}(-1)^mR_m^*\xi^m
\end{equation}
and
\begin{equation}\label{p2:eq:R-convolution}
 \sum_{a+b=m}(-1)^bR_aR_b^*
 =
 \sum_{a+b=m}(-1)^aR_a^*R_b
 =
 \delta_{m0}\id.
\end{equation}
In particular,
\[
 R_1^*=R_1,\qquad
 R_2^*=R_1^2-R_2.
\]
A superscript $*$ on an endomorphism denotes this adjoint; expressions such as $E^{*k}$ and $\Delta^{*m}$ denote Frobenius powers.

The unit-preserving translation is
\begin{equation}\label{p2:eq:translation}
  \mathsf T(\psi)
  :=
  \psi\bigl(\one-R^{-1}(\psi)\one\bigr)
  =
  \sum_{m\geq2}\mathsf T_m\psi^m,
  \qquad
  \mathsf T_m:=(-1)^mR_{m-1}^*\one.
\end{equation}
Thus $\mathsf T_2=R_1\one$ and $\mathsf T_3=(R_2-R_1^2)\one$. The translation acts on a CohFT by
\begin{equation}\label{p2:eq:translated-cohft}
\begin{aligned}
 (\mathsf T\Omega)_{g,n}(v_1,\ldots,v_n)
 ={}&
 \sum_{m\geq0}
 \sum_{i_1,\ldots,i_m\geq2}
 \frac1{m!}(p_m)_*\\
 &\left(
 \prod_{j=1}^{m}\psi_{n+j}^{i_j}\,
 \Omega_{g,n+m}
  (v_1,\ldots,v_n,\mathsf T_{i_1},\ldots,\mathsf T_{i_m})
 \right).
\end{aligned}
\end{equation}
Cohomological degree makes this sum finite.

A stable graph $\Gamma$ of type $(g,n)$ consists of a finite connected graph with $n$ labeled legs, together with an assignment $g_v\in\mathbb Z_{\geq0}$ to every vertex $v$, such that
\[
 2g_v-2+n_v>0,
 \qquad
 g=h^1(\Gamma)+\sum_v g_v.
\]
Here $n_v$ counts both legs and half-edges incident to $v$. Write
\[
 \Mbar_\Gamma=\prod_v\Mbar_{g_v,n_v},
 \qquad
 \operatorname{gl}_\Gamma:\Mbar_\Gamma\longrightarrow\Mbar_{g,n}
\]
for the gluing morphism. The graph contribution $\Cont_\Gamma(v_1,\ldots,v_n)\in H^*(\Mbar_{g,n})$ is constructed as follows:
\begin{enumerate}
\item put $(\mathsf T\Omega)_{g_v,n_v}$ at every vertex;
\item put $R^{-1}(\psi_l)v_l$ at the $l$-th leg;
\item at an edge with half-edge cotangent classes $\psi',\psi''$, put
\begin{equation}\label{p2:eq:edge}
  \frac{
    \eta^{-1}
    -R^{-1}(\psi')\eta^{-1}R^{-1}(\psi'')^{\mathsf T}
  }{\psi'+\psi''};
\end{equation}
\item contract all factors and push the resulting class forward by $\operatorname{gl}_\Gamma$.
\end{enumerate}
Here ${}^{\mathsf T}$ denotes ordinary matrix transpose in the fixed flat basis. An automorphism of $\Gamma$ preserves the vertex genera, the edges, and every labeled leg. The numerator in \eqref{p2:eq:edge} vanishes at $\psi''=-\psi'$ by \eqref{p2:eq:R-symplectic}, so the quotient is a formal power series. In a basis $\{e_k\}$ with dual basis $\{e^k\}$, it is
\begin{equation}\label{p2:eq:edge-expanded}
 \sum_{\alpha,\beta\geq0}(\psi')^\alpha(\psi'')^\beta
 \sum_{p=0}^{\beta}\sum_k
 (-1)^{p+\alpha}
 R^*_{\alpha+\beta+1-p}e_k\otimes R_p^*e^k.
\end{equation}
Let $G_{g,n}$ denote the set of isomorphism classes of stable graphs of type $(g,n)$. The $R$-matrix action is the graph sum
\begin{equation}\label{p2:eq:R-action-graph-sum}
 (R\mathbin{.}\Omega)_{g,n}(v_1,\ldots,v_n)
 :=
 \sum_{\Gamma\in G_{g,n}}
 \frac1{\abs{\Aut\Gamma}}
 \Cont_\Gamma(v_1,\ldots,v_n).
\end{equation}

Teleman's classification theorem \cite[Theorem~1, Proposition~8.12, and Theorem~8.15]{The_structure_of_2D_semi-simple_field_theories} says that on the semisimple locus a homogeneous CohFT is reconstructed from its topological part by the unique $R$-matrix compatible with homogeneity, with its action on the CohFT understood in the unit-preserving sense of \eqref{p2:eq:translation} and \eqref{p2:eq:R-action-graph-sum}. The $R$-matrix satisfies
\begin{equation}\label{p2:eq:R-recursion}
  [R_{n+1},\E]=(n\cdot\id+\mu)R_n,
  \qquad n\geq0.
\end{equation}
This reconstructs higher-genus CohFT classes from the genus-zero Frobenius-manifold data.

Applying \eqref{p2:eq:R-action-graph-sum} to the shifted topological part and using \eqref{p2:eq:potential-as-shifted-cohft}--\eqref{p2:eq:F1-as-shifted-cohft} gives
\begin{align}
 \partial_{t_\alpha}F_1^X(t)
 &=
 \sum_{\Gamma\in G_{1,1}}
 \frac1{\abs{\Aut\Gamma}}
 \int_{\Mbar_{1,1}}\Cont_\Gamma(\partial_{t_\alpha}),
 \label{p2:eq:graph-F1}\\
 F_g^X(t)
 &=
 \sum_{\Gamma\in G_{g,0}}
 \frac1{\abs{\Aut\Gamma}}
 \int_{\Mbar_{g,0}}\Cont_\Gamma,
 \qquad g\geq2.
 \label{p2:eq:graph-Fg}
\end{align}

\subsection{The polynomial structure and the discriminant}

We next rewrite the abstract graph sum as rational expressions in flat coordinates. This yields the polynomial structure of the higher-genus potentials.

Let $\mathcal D$ be the discriminant of the characteristic polynomial of $\E=E*$. Near a semisimple point there are canonical coordinates $u_0,\ldots,u_r$ for which
\[
 \partial_{u_i}*\partial_{u_j}
 =
 \delta_{ij}\partial_{u_i},
 \qquad
 E=\sum_i u_i\partial_{u_i},
 \qquad
 \one=\sum_i\partial_{u_i}.
\]
The canonical basis is orthogonal for $\eta$, and the $u_i$ are the eigenvalues of $\E$.  We call a semisimple point \emph{Euler-tame} if the $u_i$ are pairwise distinct, equivalently $\mathcal D\neq0$. This is stronger than semisimplicity of the Frobenius algebra.

\begin{proposition}\label{p2:prop:reconstruction}
For a homogeneous semisimple CohFT, the following quantities are, on the locus $\mathcal Y\mathcal D\neq0$, rational expressions in the flat-basis entries of $\E$: the flat-basis entries of $R_n$, the first-order derivatives of the genus-one primary potential, and the primary potentials $F_g$ for $g\geq2$. The denominator of each such expression is a product of powers of $\mathcal Y$ and $\mathcal D$.
\end{proposition}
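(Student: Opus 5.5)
We prove the three claims in order: first the $R_n$, then $\partial F_1$, then $F_g$ for $g\ge2$. The common tool is the recursion \eqref{p2:eq:R-recursion}, solved in a basis where $\E$ is diagonal with distinct eigenvalues. The flat-basis entries of $\E$ enter through the closed system \eqref{p2:eq:euler-wdvv-flat-system}, whose only denominator is $\mathcal Y$.

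\emph{Step 1: the $R$-matrix.} Work on $\mathcal D\neq0$, where $\E$ has pairwise distinct eigenvalues $u_i$. Write $\mathrm{ad}_\E(X)=[X,\E]$. This operator is semisimple on $\operatorname{End}(\HH)$, with kernel the commutant $\CC[\E]$ of $\E$; here we use that $\E$ is regular because $\mathcal D\neq0$. Solving \eqref{p2:eq:R-recursion} inductively, the off-diagonal part of $R_{n+1}$ (the part in the image of $\mathrm{ad}_\E$) is $\mathrm{ad}_\E^{-1}\bigl((n+\mu)R_n\bigr)$. This requires $(n+\mu)R_n$ to lie in the image, which is the solvability condition coming from the previous step. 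The diagonal part of $R_{n+1}$ is then fixed by the next equation, $[R_{n+2},\E]=(n+1+\mu)R_{n+1}$: its projection to $\CC[\E]$ must vanish. This is Teleman's uniqueness, expressed as in the standard Dubrovin--Givental treatment, where the diagonal part is determined by one more order together with the flatness/homogeneity identity \eqref{p2:eq:euler-wdvv}.

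To stay inside rational functions of the flat entries without diagonalizing, express everything through the projection onto $\CC[\E]$ and the inverse of $\mathrm{ad}_\E$ on its image. The projection can be written using the idempotents $P_i=\prod_{j\neq i}(\E-u_j)/(u_i-u_j)$. These are not individually rational, but any expression symmetric under permuting the $u_i$ is a polynomial in the coefficients of the characteristic polynomial divided by powers of the Vandermonde square, that is, by $\mathcal D$.

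So the plan is: (a) show by induction that $R_n$ is a symmetric function of the eigen-data, hence a rational function of the flat entries of $\E$; (b) use the closed system \eqref{p2:eq:euler-wdvv-flat-system} and \eqref{p2:eq:euler-wdvv} to rewrite the derivatives appearing in the diagonal condition. The recursive step for the diagonal part uses \eqref{p2:eq:euler-wdvv-flat-system}, and this is where powers of $\mathcal Y$ enter, through $(Y^{\mathsf T})^{-1}$. I expect this diagonal-part determination to be the main obstacle. One must check that the standard formula, which classically involves the rotation coefficients $\Psi$ from flat to normalized canonical frame and square roots of $\eta(\partial_{u_i},\partial_{u_i})$, can be reorganized so that the square roots cancel. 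They should cancel because $R_n$ in the flat basis is conjugated back by $\Psi$ and $\Psi^{-1}$ at the same index.

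\emph{Step 2: genus one.} The graphs in $G_{1,1}$ are the single genus-one vertex and the genus-zero vertex with a loop. By \eqref{p2:eq:graph-F1}, only $\psi$-degree at most one contributes. Each contribution is therefore a contraction of $R_1$ with TFT values \eqref{p2:eq:TFT}, which are polynomial in the structure constants (handle element $\Delta$, $\eta^{-1}$), together with the translation term $\mathsf T_2=R_1\one$. The structure constants $F_{\alpha\beta\gamma}$ are themselves rational in the flat entries of $\E$ with denominator a power of $\mathcal Y$, by \eqref{p2:eq:flat-euler-derivative}: $\partial_{t_\alpha}*$ is recovered from $\partial_{t_\alpha}\E$ and $\mu$, and \eqref{p2:eq:euler-wdvv-flat-system} gives $\partial_{t_\alpha}\E$ rationally with denominator $\mathcal Y$.

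\emph{Step 3: genus $g\ge2$.} In \eqref{p2:eq:graph-Fg} the graph set $G_{g,0}$ is finite. Dimension constraints on $\Mbar_{g,0}$ bound the $\psi$-powers, so only $R_1,\dots,R_{3g-3}$ and finitely many translation terms $\mathsf T_m$ appear. Vertex values are integrals of $\psi$-monomials against TFT tensors, with rational-number coefficients times polynomials in the structure constants. Edge factors \eqref{p2:eq:edge-expanded} are polynomial in the $R_m$ and $\eta^{-1}$. Combining with Steps 1–2 gives rational expressions whose denominators are products of powers of $\mathcal Y$ and $\mathcal D$.
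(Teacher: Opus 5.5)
There is a genuine gap in Step~1, at exactly the point you flag as the main obstacle. It comes from a misreading of \eqref{p2:eq:R-recursion}: that recursion contains no derivatives, so the diagonal part of $R_{n+1}$ is fixed by linear algebra alone. Neither \eqref{p2:eq:euler-wdvv-flat-system} nor \eqref{p2:eq:euler-wdvv}, nor rotation coefficients, nor square roots of $\eta(\partial_{u_i},\partial_{u_i})$ play any role, and $\mathcal Y$ never enters $R_n$. What is missing is the actual solution of the solvability condition you wrote down. Put $R_{n+1}=O+D$, where $O=\mathrm{ad}_\E^{-1}\bigl((n+\mu)R_n\bigr)=\sum_{i\neq j}P_i(n+\mu)R_nP_j/(u_j-u_i)$ and $D=\sum_id_iP_i\in\CC[\E]$. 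The condition that $(n+1+\mu)R_{n+1}$ have no component in $\CC[\E]$ reads $\sum_iP_i\bigl((n+1)D+\mu O+\mu D\bigr)P_i=0$. Because $\mu$ is $\eta$-skew and the idempotents are $\eta$-orthogonal, $P_i\mu P_i=\widetilde\mu_{ii}P_i=0$. Hence $P_i\mu DP_i=d_iP_i\mu P_i=0$, and $D=-\frac{1}{n+1}\sum_iP_i\mu OP_i$. This fact is indispensable. It is what places $\mu=(0+\mu)R_0$ in the image of $\mathrm{ad}_\E$ at the base step, and without it the coefficient of $d_i$ would be $n+1+\widetilde\mu_{ii}$, which could vanish. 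With this explicit formula your symmetric-function argument goes through. It shows that $R_n$, and hence $R_n^*$ (as $\eta$ is constant), has only powers of $\mathcal D$ in its denominator. The paper reaches the same conclusion without diagonalizing. It adjoins the trace conditions $\Tr\bigl(\E^k((n+1)\cdot\id+\mu)R_{n+1}\bigr)=0$, $0\leq k\leq r$, to the commutator equation, obtaining the square system \eqref{p2:eq:An-system}. Its determinant \eqref{p2:eq:R-system-determinant} is $\pm(n+1)^{r+1}\mathcal D^2$, so Cramer's rule gives the flat-basis entries directly.

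There is a second, smaller flaw in Step~2. In general $\partial_{t_\alpha}*$ cannot be recovered from $\partial_{t_\alpha}\E$ through \eqref{p2:eq:flat-euler-derivative}. In the flat basis the map $X\mapsto X+[X,\mu]$ multiplies the $(\beta,\gamma)$ entry by $1+q_\gamma-q_\beta$, which vanishes whenever $q_\beta=q_\gamma+1$. For $\P{2}$ the lost entries happen to be constant, but already for $\P{3}$ the lost entry $(\partial_{t_1}*)^{2}_{\ 1}$ equals $F_{111}$, which is not. Use \eqref{p2:eq:flat-from-euler} instead: $(\partial_{t_\alpha}*)=\sum_k((Y^{\mathsf T})^{-1})_{\alpha k}\E^k$ is rational in the entries of $\E$, with only powers of $\mathcal Y$ in the denominator. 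This is where $\mathcal Y$ actually enters the proposition, through the TFT tensors, and it is also how \eqref{p2:eq:euler-wdvv-flat-system} is obtained in the first place. With these two repairs, your Steps~2 and~3 agree with the paper's argument.
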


\begin{proof}
The transition matrix \eqref{p2:eq:Y-definition} gives 
\begin{equation}\label{p2:eq:flat-from-euler}
 \partial_{t_\alpha}
 =
 \sum_{k=0}^{r}
 ((Y^{\mathsf T})^{-1})_{\alpha k}E^{*k}.
\end{equation}
Hence
\[
 (\partial_{t_\alpha}*)
 =
 \sum_{k=0}^{r}
 ((Y^{\mathsf T})^{-1})_{\alpha k}\E^k,
\]
so every flat multiplication matrix is rational in flat-basis entries of $\E$, with only powers of $\mathcal Y$ in the denominator.

Write $\widetilde R_n$ and $\widetilde\mu$ for the matrices of $R_n$ and $\mu$ in the canonical basis $\{\partial_{u_0},\ldots,\partial_{u_r}\}$. In the canonical basis, the recursion
\eqref{p2:eq:R-recursion} reads
\[
 [\widetilde R_{n+1},U]
  =
  (n\cdot\id+\widetilde\mu)\widetilde R_n,
 \qquad
 U=\diag(u_0,\ldots,u_r).
\]
Since $\widetilde\mu$ is skew-symmetric with respect to $\eta$, its diagonal entries vanish. Then the off-diagonal entries of $\widetilde R_{n+1}$ are determined. The diagonal part of the next recursion determines its diagonal entries. Thus the homogeneous recursion uniquely determines all $R_n$ from $R_0=\id$.

Denote by $\vvec$ the operator that stacks the columns of a matrix. Denote by $I$ the identity matrix of size $r+1$. Define
\begin{align}
 \mathcal A_n
 &:=
 \E^{\mathsf T}\otimes I-I\otimes\E  +
 \sum_{k=0}^{r}
 \vvec(\E^k)
 \vvec\!\left(
  ((n+1)\cdot\id+\mu)^{\mathsf T}(\E^k)^{\mathsf T}
 \right)^{\mathsf T}.
 \label{p2:eq:An}
\end{align}
The recursion of the $R$-matrix can be written as
\begin{equation}\label{p2:eq:An-system}
 \mathcal A_n\vvec(R_{n+1})
 =
 \vvec((n\cdot\id+\mu)R_n).
\end{equation}
This encodes
\[
 \Tr\bigl(\E^k((n+1)\cdot\id+\mu)R_{n+1}\bigr)=0,
 \qquad 0\leq k\leq r.
\]
In the canonical basis and then the elementary-matrix basis,
$\mathcal A_n$ is block lower triangular.  Its off-diagonal block has
diagonal entries $u_j-u_i$, while its diagonal block is
$(n+1)VV^{\mathsf T}$, where
$V=(u_i^k)_{0\leq i,k\leq r}$ is the Vandermonde matrix.  Therefore, 
\begin{equation}\label{p2:eq:R-system-determinant}
  \det\mathcal A_n
  =
  (-1)^{r(r+1)/2}(n+1)^{r+1}\mathcal D^2.
\end{equation}
Note that $\E^*=\E$, $\mu^*=-\mu$. Induction in \eqref{p2:eq:An-system} implies that every $R_n$ and its adjoint $R_n^*$ are in the localization by $\mathcal D$.

The TFT tensors are polynomial in the flat multiplication matrices. Then by \eqref{p2:eq:graph-F1}--\eqref{p2:eq:graph-Fg} and
\eqref{p2:eq:flat-from-euler}, the proposition is proved.
\end{proof}

\begin{remark}
  There is a relation between the two denominators $\mathcal{Y}$ and $\mathcal{D}$. Define
  \[
    \mathcal U:=\det(\Delta*).
  \]
  At a semisimple point, write $\eta(\partial_{u_i},\partial_{u_j})=\delta_{ij}\eta_{ii}$. Then
  \[
    \Delta=\sum_i\eta_{ii}^{-1}\partial_{u_i},
    \qquad
    \mathcal U=\prod_i\eta_{ii}^{-1}.
  \]
  If $C$ is the canonical-to-flat change-of-basis matrix, then $Y=CV$, while
  \[
    C^{\mathsf T}(\eta_{\alpha\beta})C
    =
    \diag(\eta_{00},\ldots,\eta_{rr}).
  \]
  It follows that
  \begin{equation}\label{p2:eq:Y-D-U-relation}
    \mathcal D
    =
    \det(\eta_{\alpha\beta})\,\mathcal U\,\mathcal Y^2.
  \end{equation}
\end{remark}

The next proposition implies that $\mathcal U$ detects semisimplicity, while $\mathcal D=0$ only detects a collision of eigenvalues of $E*$.
\begin{proposition}\label{p2:prop:handle-semisimple}
Let $A$ be a finite-dimensional commutative Frobenius algebra over $\CC$. Then $A$ is semisimple if and only if multiplication by the handle element $\Delta$ is invertible.
\end{proposition}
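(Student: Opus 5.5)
The plan is to prove both directions by decomposing $A$ into local algebras and computing $\Delta$ blockwise. First I will check that $\Delta=\sum_\alpha e_\alpha*e^\alpha$ does not depend on the chosen basis: it is the image of the copairing $\eta^{-1}\in A\otimes A$ under multiplication. Since $A$ is finite-dimensional and commutative over $\CC$, it splits as $A=\prod_{i}A_i$ into local Artinian algebras $A_i$ with residue field $\CC$, via the primitive idempotents $\epsilon_i$. Frobenius invariance gives $\eta(\epsilon_i x,\epsilon_j y)=\eta(x,\epsilon_i\epsilon_j y)=0$ for $i\neq j$, so the decomposition is $\eta$-orthogonal. Each $(A_i,\eta|_{A_i})$ is therefore itself a Frobenius algebra, and $\Delta=\sum_i\Delta_i$ with $\Delta_i\in A_i$ the handle element of $A_i$. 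Multiplication by $\Delta$ is invertible if and only if each $\Delta_i$ is a unit in $A_i$. In a local ring this means $\Delta_i\notin\mathfrak m_i$.

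\emph{Semisimple implies invertible.} If $A$ is semisimple, every $A_i\cong\CC$. The basis $\epsilon_i$ has dual basis $\epsilon_i/\eta(\epsilon_i,\epsilon_i)$, so $\Delta=\sum_i\eta_{ii}^{-1}\epsilon_i$, which is invertible. This agrees with the earlier Remark giving $\mathcal U=\prod_i\eta_{ii}^{-1}$.

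\emph{Invertible implies semisimple.} It suffices to show that for a local Frobenius algebra $B$ with $\dim B\geq2$ and maximal ideal $\mathfrak m\neq0$, the handle element lies in $\mathfrak m$. The key identity is $\eta(\Delta,x)=\Tr(L_x)$, where $L_x$ is multiplication by $x$. Indeed,
\[
\eta(\Delta,x)=\sum_\alpha\eta(e_\alpha*e^\alpha,x)=\sum_\alpha\eta(e^\alpha,x*e_\alpha)=\Tr(L_x).
\]
Every $x\in\mathfrak m$ is nilpotent, so $\Tr(L_x)=0$ and hence $\Delta\in\mathfrak m^\perp$. Nondegeneracy gives $\dim\mathfrak m^\perp=1$. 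The socle of $B$ lies in $\mathfrak m^\perp$: if $s\mathfrak m=0$ and $x\in\mathfrak m$, then $\eta(s,x)=\eta(sx,1)=0$. The socle is nonzero, so $\mathfrak m^\perp=\mathrm{soc}(B)$. When $\mathfrak m\neq0$, the socle is contained in $\mathfrak m$, since a unit cannot annihilate $\mathfrak m$. Therefore $\Delta\in\mathfrak m$ is not a unit, which proves the converse.

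The only delicate point is this last step: pinning $\Delta$ into the one-dimensional $\mathfrak m^\perp$ via the trace identity, and identifying $\mathfrak m^\perp$ with the socle. As a cross-check, $\Tr(L_1)=\dim B=\eta(\Delta,1)$, and for $\dim B\ge2$ this is consistent with $\Delta$ being a nonzero multiple of the socle generator. No analytic input from earlier in the paper is needed beyond the definition of $\Delta$.
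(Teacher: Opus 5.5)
Your proof is correct, but it takes a different route from the paper's.

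The paper's argument is two lines. It uses the same trace identity in bilinear form, $\Tr(m_{x*y})=\eta(x*y,\Delta)=\eta(x,m_\Delta y)$. Nondegeneracy of $\eta$ then shows that the trace pairing $(x,y)\mapsto\Tr(m_{x*y})$ is nondegenerate exactly when $m_\Delta$ is invertible. Finally it cites, as a black box, the standard criterion that a finite-dimensional commutative $\CC$-algebra is semisimple if and only if its trace pairing is nondegenerate.

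You instead split $A$ into $\eta$-orthogonal local Frobenius blocks and prove what is needed by hand. In the semisimple case you compute $\Delta=\sum_i\eta_{ii}^{-1}\epsilon_i$ explicitly. In a non-reduced block you trap $\Delta$ in $\mathfrak m^\perp=\mathrm{soc}(B)\subseteq\mathfrak m$. In effect, this reproves the relevant half of the trace-form criterion in the Frobenius setting, so your version is self-contained. It also gives extra information: the explicit formula, which matches $\mathcal U=\prod_i\eta_{ii}^{-1}$ in the preceding Remark, and the fact that in a non-semisimple block $\Delta$ is a nonzero element spanning the socle (nonzero because $\eta(\Delta,1)=\dim B$). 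The cost is length; the paper's version is shorter because it leans on the cited criterion.

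A small streamlining of your last step: since $\mathfrak m$ is an ideal, invariance gives $\eta(sx,y)=\eta(s,xy)$ for $x\in\mathfrak m$. Hence $\mathfrak m^\perp=\operatorname{Ann}(\mathfrak m)$ follows directly, without the dimension count or the nonvanishing of the socle.
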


\begin{proof}
If $m_x$ denotes multiplication by $x$, a computation of the trace in dual bases gives
\[
 \Tr(m_{x*y})
 =
 \eta(x*y,\Delta)
 =
 \eta(x,m_\Delta y).
\]
Since $\eta$ is nondegenerate, the trace pairing $(x,y)\mapsto\Tr(m_{x*y})$ is nondegenerate exactly when $m_\Delta$ is invertible. A finite-dimensional commutative algebra over $\CC$ is semisimple exactly when its trace pairing is nondegenerate.
\end{proof}

\begin{proposition}\label{p2:prop:D-propagation}
For each flat coordinate $t_\alpha$, there is a polynomial $p_\alpha(\E)$ in the flat-basis entries of $\E$ such that
\begin{equation}\label{p2:eq:D-propagation}
  \mathcal Y\,\partial_{t_\alpha}\mathcal D
  =
  p_\alpha(\E)\mathcal D.
\end{equation}
\end{proposition}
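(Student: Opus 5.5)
We need to show $\mathcal Y\,\partial_{t_\alpha}\mathcal D$ is a polynomial multiple of $\mathcal D$. We work first on the open locus where $\mathcal Y\mathcal D\neq0$, using canonical coordinates, and then extend the resulting identity by polynomiality.

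\textbf{Step 1: reduce to derivatives of the eigenvalues.} Write
\[
\mathcal D=\prod_{i<j}(u_i-u_j)^2 .
\]
Then
\[
\partial_{t_\alpha}\mathcal D=\mathcal D\sum_{i\neq j}\frac{\partial_{t_\alpha}u_i-\partial_{t_\alpha}u_j}{u_i-u_j}.
\]
Since $du_i(\partial_{u_j})=\delta_{ij}$ and the $\partial_{u_i}$ are the idempotents, $\partial_{t_\alpha}u_i$ is the $i$-th idempotent coordinate of $\partial_{t_\alpha}$. Equivalently, $\partial_{t_\alpha}u_i$ is the eigenvalue of $(\partial_{t_\alpha}*)$ on $\partial_{u_i}$. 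One can also see this from \eqref{p2:eq:flat-euler-derivative}: the commutator $[\partial_{t_\alpha}*,\mu]$ does not change the spectrum to first order, because both $\E$ and $\partial_{t_\alpha}*$ are diagonal in the canonical basis, so the diagonal part of that commutator vanishes.

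\textbf{Step 2: write these eigenvalues as a polynomial in $\E$.} Using \eqref{p2:eq:flat-from-euler},
\[
\partial_{t_\alpha}u_i=\sum_k c_{\alpha k}u_i^k,\qquad c_{\alpha k}=((Y^{\mathsf T})^{-1})_{\alpha k}.
\]
So $\partial_{t_\alpha}u_i=P_\alpha(u_i)$ for the single polynomial $P_\alpha(x)=\sum_k c_{\alpha k}x^k$, whose coefficients lie in $\mathcal Y^{-1}\CC[\E]$: they are cofactors of $Y$ divided by $\mathcal Y$, and the entries of $Y$ are polynomial in the entries of $\E$. Each difference quotient $(P_\alpha(u_i)-P_\alpha(u_j))/(u_i-u_j)$ is then a polynomial in $u_i,u_j$ and the coefficients $c_{\alpha k}$, with no division by $u_i-u_j$. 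Hence
\[
\mathcal Y\,\partial_{t_\alpha}\mathcal D=\mathcal D\cdot S_\alpha,
\]
where
\[
S_\alpha=\sum_{i\neq j}\sum_k \mathcal Y c_{\alpha k}\,\frac{u_i^k-u_j^k}{u_i-u_j}.
\]

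\textbf{Step 3: show $S_\alpha$ is a polynomial in the flat-basis entries of $\E$.} The factors $\mathcal Y c_{\alpha k}$ are cofactors of $Y$, hence polynomial in the entries of $\E$. Each remaining sum $\sum_{i\neq j}h_{k-1}(u_i,u_j)$, with $h_{k-1}$ the complete homogeneous symmetric polynomial, is symmetric in the $u_i$. By the fundamental theorem of symmetric polynomials it is a polynomial in the coefficients of the characteristic polynomial of $\E$, and so a polynomial in the entries of $\E$. This gives the polynomial $p_\alpha(\E)$.

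\textbf{Step 4: extend the identity.} Both sides of \eqref{p2:eq:D-propagation} are analytic, and by \eqref{p2:eq:euler-wdvv-flat-system} the entries of $\E$ are analytic wherever $\mathcal Y\neq0$. The identity holds on the dense open set where also $\mathcal D\neq0$, so it extends by continuity. Alternatively, it can be phrased as an algebraic identity in the ring generated by the entries of $\E$ and $\mathcal Y^{-1}$, which needs no continuity argument.

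\textbf{Main obstacle.} The delicate point is Step 1: identifying $\partial_{t_\alpha}u_i$ with the eigenvalue of $\partial_{t_\alpha}*$ without presupposing semisimplicity beyond $\mathcal D\neq0$. If needed, one argues directly with matrices. By \eqref{p2:eq:flat-euler-derivative}, differentiating $\E$ gives $(\partial_{t_\alpha}*)+[\partial_{t_\alpha}*,\mu]$. Then $\partial\det(x-\E)$ equals $-\Tr(\operatorname{adj}(x-\E)\,\partial\E)$, and the commutator term has zero trace against $\operatorname{adj}(x-\E)$ because $\operatorname{adj}(x-\E)$ commutes with $\partial_{t_\alpha}*$, both being polynomials in commuting multiplication operators. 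This gives the coefficient derivatives directly, and $\partial\mathcal D$ then follows from the discriminant as a polynomial in those coefficients.
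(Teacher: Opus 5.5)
Your proposal is correct and follows essentially the paper's own proof. Both arguments use $\operatorname{adj}(Y^{\mathsf T})$ to write $\mathcal Y\,\partial_{t_\alpha}$ as a polynomial combination of the Euler powers $E^{*k}=\sum_i u_i^k\partial_{u_i}$. Both then observe that the resulting logarithmic derivative of $\mathcal D=\prod_{i<j}(u_i-u_j)^2$ is a symmetric polynomial in the $u_i$, hence a polynomial in the entries of $\E$, and extend the identity from $\mathcal D\neq0$ by analyticity.

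The step you flag as delicate is in fact immediate: on $\mathcal D\neq0$ canonical coordinates exist, and $\partial_{t_\alpha}u_i=du_i(\partial_{t_\alpha})$. So your perturbation and adjugate-trace remarks are correct but not needed.
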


\begin{proof}
Equation \eqref{p2:eq:Y-definition} is equivalent to
\[
 Y^{\mathsf T}
 \begin{pmatrix}
 \partial_{t_0}\\ \vdots\\ \partial_{t_r}
 \end{pmatrix}
 =
 \begin{pmatrix}
  \one\\ E\\ \vdots\\ E^{*r}
 \end{pmatrix}.
\]
Multiplication by $\operatorname{adj}(Y^{\mathsf T})$ expresses every $\mathcal Y\partial_{t_\alpha}$ as a polynomial linear combination of $\one,E,\ldots,E^{*r}$. In canonical coordinates we have $E^{*k}= \sum_{i=0}^{r} u_i^k \partial_{u_i}$ and $\mathcal{D}=\prod_{i<j} (u_i-u_j)^2$. On the Euler-tame locus, by straightforward computation,
\begin{equation}\label{p2:eq:D-euler-power}
 E^{*k}(\log\mathcal D)
 =
 2\sum_{i<j}\frac{u_i^k-u_j^k}{u_i-u_j}.
\end{equation}
The right-hand side is a symmetric polynomial in the $u_i$, hence a polynomial in the coefficients of $\det(\lambda\id-\E)$, and therefore in the entries of $\E$. This proves \eqref{p2:eq:D-propagation} on $\mathcal D\neq0$; analyticity extends the identity across $\mathcal D=0$.
\end{proof}

\begin{remark}
Let $t:[s_0,s_1]\to\{\mathcal Y\neq0\}$ be a piecewise $C^1$ path. More generally, one may allow $\mathcal Y(t(s_1))=0$, provided that $\mathcal Y(t(s))\neq0$ for $s_0\leq s<s_1$ and that
\[
 a(s):=
 \sum_{\alpha=0}^{r}
 \frac{p_\alpha(\E(t(s)))}{\mathcal Y(t(s))}
 \dot t_\alpha(s)
\]
belongs to $L^1([s_0,s_1])$. Equation \eqref{p2:eq:D-propagation} and the chain rule give
\[
 \frac{\dd}{\dd s}\mathcal D(t(s))
 =a(s)\mathcal D(t(s))
\]
for $s_0\leq s<s_1$. Hence, by integration and continuity at the endpoint,
\begin{equation}\label{p2:eq:D-path}
\begin{aligned}
 \mathcal D(t(s_1))
 ={}&
 \mathcal D(t(s_0))
 \exp\left(
 \int_{s_0}^{s_1}
 \sum_{\alpha=0}^{r}
 \frac{p_\alpha(\E(t(s)))}{\mathcal Y(t(s))}
 \dot t_\alpha(s)\,\dd s
 \right).
\end{aligned}
\end{equation}
Consequently, a nonzero discriminant cannot reach zero along such a path.
\end{remark}

\subsection{A transfer theorem in singularity analysis}

The reconstruction produces local Laurent--Puiseux expansions, and the last step is to transfer them to asymptotics of GW invariants. We use generating functions in the exponential variable, which are natural for GW potentials. For the coefficient-transfer principle, see \cite[Chapter~VI]{Analytic_Combinatorics}; for the local continuation formula for the polylogarithm used below, see \cite[Eq.~(25.12.12)]{NIST_DLMF}.

For $\rho>1$ and $0<\theta<\pi/2$, consider the domain 
\begin{equation}\label{p2:eq:Delta-domain}
 \mathfrak D(\rho,\theta)
 =
 \left\{
 u\in\CC:
 |u|<\rho,\quad
 u\neq1,\quad
 |\arg(u-1)|>\theta
 \right\}.
\end{equation}
We say a function has a $\mathfrak D$-continuation at $u=1$ if it is holomorphic on some domain of the form $\mathfrak D(\rho,\theta)$, where the argument in \eqref{p2:eq:Delta-domain} is the principal argument in $(-\pi,\pi]$.

\begin{theorem}\label{p2:thm:polylog-expansion}
For the polylogarithm function
\[
 \Li_s(u)=\sum_{d\geq1}\frac{u^d}{d^s},
 \qquad |u|<1,
\]
put $u=e^\mu$, where $0<|\mu|<2\pi$ and $\mu\notin\R_{\geq0}$. With $-\pi<\arg(-\mu)\leq\pi$, and with $(-\mu)^\alpha$ and $\log(-\mu)$ computed on this principal branch, one has
\begin{align}
 \Li_s(e^\mu)
 &=
 \Gamma(1-s)(-\mu)^{s-1}
 +
 \sum_{m\geq0}\zeta(s-m)\frac{\mu^m}{m!},
 &&
 s\notin\Z_{\geq1},
 \label{p2:eq:polylog-noninteger}\\
 \Li_r(e^\mu)
 &=
 \frac{\mu^{r-1}}{(r-1)!}
 \bigl(H_{r-1}-\log(-\mu)\bigr)
 +
 \sum_{\substack{m\geq0\\m\neq r-1}}
 \zeta(r-m)\frac{\mu^m}{m!},
 &&
 r\in\Z_{\geq1},
 \label{p2:eq:polylog-integer}
\end{align}
where $\zeta(s)$ is the Riemann zeta function, $H_0=0$, and $H_m=\sum_{j=1}^m j^{-1}$.
\end{theorem}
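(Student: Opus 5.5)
We will obtain both expansions from the Mellin-transform representation of the polylogarithm together with a residue computation, and then continue in $s$ to treat the integer case. For $\operatorname{Re}\mu<0$ and $\operatorname{Re}s>1$ we start from
\[
 \Li_s(e^\mu)=\frac{1}{\Gamma(s)}\int_0^\infty\frac{x^{s-1}}{e^{x-\mu}-1}\,\dd x,
\]
which we obtain by expanding the geometric series. An equivalent and more convenient route is the Mellin–Barnes representation in the variable $\mu$: write
\[
 \Li_s(e^{\mu})=\sum_{d\geq1}d^{-s}e^{d\mu},
\]
and use the Cahen–Mellin integral $e^{-y}=\frac{1}{2\pi\iu}\int_{(c)}\Gamma(w)y^{-w}\dd w$ with $y=d(-\mu)$, $c>\max(1,1-\operatorname{Re}s)$. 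Exchanging sum and integral gives
\[
 \Li_s(e^\mu)=\frac{1}{2\pi\iu}\int_{(c)}\Gamma(w)\zeta(s+w)(-\mu)^{-w}\dd w.
\]
This holds for $\operatorname{Re}(-\mu)>0$, where $(-\mu)^{-w}$ is taken on the principal branch.

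We then shift the contour to the left, past $\operatorname{Re}w=-M$. The integrand has two kinds of poles. At $w=-m$, $m\geq0$, the poles come from $\Gamma$ and have residues $\frac{(-1)^m}{m!}\zeta(s-m)(-\mu)^m=\zeta(s-m)\mu^m/m!$. At $w=1-s$ there is a pole from $\zeta$ with residue $\Gamma(1-s)(-\mu)^{s-1}$. The remaining integral is $O(|\mu|^{M})$ as $\mu\to0$. To justify the shift we need Stirling decay of $\Gamma(w)$ on vertical lines, combined with the polynomial growth of $\zeta$ in vertical strips (from the functional equation). This gives exponential decay like $e^{-(\pi/2-|\arg(-\mu)|)|\operatorname{Im}w|}$ times polynomial factors, so the shift is legitimate for $|\arg(-\mu)|<\pi/2$. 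Letting $M\to\infty$, the residue sum converges for $|\mu|<2\pi$. This convergence is the main technical point. Writing $\zeta(s-m)$ for large $m$ through the functional equation gives $\zeta(s-m)\sim 2(2\pi)^{s-m-1}\Gamma(m+1-s)\sin(\cdots)$, so $\zeta(s-m)\mu^m/m!$ behaves like $(|\mu|/2\pi)^m m^{-s}$. Hence the series converges absolutely and locally uniformly in $|\mu|<2\pi$, and the remainder tends to zero. This proves \eqref{p2:eq:polylog-noninteger} on the half-plane $\operatorname{Re}\mu<0$, $|\mu|<2\pi$.

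Next we extend to the full slit disc $0<|\mu|<2\pi$, $\mu\notin\R_{\geq0}$. The left side $\Li_s(e^\mu)$ continues analytically along paths in this slit disc, as long as $e^\mu$ avoids $[1,\infty)$. Since $|\mu|<2\pi$, $e^\mu=1$ only at $\mu=0$, and $e^\mu\in(1,\infty)$ only for $\mu\in(0,2\pi)$ or off-real points with $\operatorname{Im}\mu\in 2\pi\Z\setminus\{0\}$, which lie outside the disc. So the slit disc maps into the domain of the principal branch of $\Li_s$. The right side is holomorphic there with the principal branch of $(-\mu)^{s-1}$. By the identity theorem the two sides agree on the whole slit disc.

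Finally, we obtain \eqref{p2:eq:polylog-integer} by letting $s\to r\in\Z_{\geq1}$, using continuity of $\Li_s$ in $s$. In \eqref{p2:eq:polylog-noninteger}, only two terms are singular as $s\to r$: $\Gamma(1-s)(-\mu)^{s-1}$ and the $m=r-1$ term $\zeta(s-r+1)\mu^{r-1}/(r-1)!$. Put $s=r+\epsilon$. Then
\[
 \Gamma(1-r-\epsilon)=\frac{(-1)^{r}}{(r-1)!}\Bigl(\frac1\epsilon-\psi(r)+O(\epsilon)\Bigr)
 \quad\text{and}\quad
 \zeta(1+\epsilon)=\frac1\epsilon+\gamma+O(\epsilon).
\]
We also have $(-\mu)^{r-1+\epsilon}=(-1)^{r-1}\mu^{r-1}(1+\epsilon\log(-\mu)+\cdots)$. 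In the sum, the poles cancel. Using $-\psi(r)=\gamma-H_{r-1}$, the finite part is
\[
 \frac{\mu^{r-1}}{(r-1)!}\bigl(H_{r-1}-\gamma-\log(-\mu)+\gamma\bigr)=\frac{\mu^{r-1}}{(r-1)!}\bigl(H_{r-1}-\log(-\mu)\bigr),
\]
which is the stated formula. The limit can be taken termwise because the series bound above is uniform for $s$ near $r$.
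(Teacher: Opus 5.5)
Your proof is correct in outline, but for the non-integer formula it takes a different route from the paper, and one step needs a real estimate.

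\textbf{Comparison with the paper.} The paper does not prove \eqref{p2:eq:polylog-noninteger}. It quotes \cite[Eq.~(25.12.12)]{NIST_DLMF} and only translates $u=e^\mu$ and the branch of $(-\mu)^{s-1}$. For \eqref{p2:eq:polylog-integer} it takes $s\to r$ and cancels the pole of $\Gamma(1-s)$ against the pole of $\zeta(s-r+1)$. That is exactly your last step; you also carry out the Laurent expansions, using $\psi(r)=-\gamma+H_{r-1}$, which the paper leaves implicit.

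Your Mellin--Barnes derivation makes the theorem self-contained and shows where each hypothesis comes from:
\begin{itemize}
\item the residue at $w=1-s$ produces the singular term with the principal branch of $(-\mu)^{s-1}$;
\item the residues at $w=-m$ produce the Taylor part;
\item the radius $2\pi$ comes from the $(2\pi)^{-m}$ decay of $\zeta(s-m)/m!$, reflecting the singularities of $\Li_s(e^\mu)$ at $\mu=\pm2\pi\iu$;
\item the identity theorem carries the formula from $\operatorname{Re}\mu<0$ to the slit disc, on which $e^\mu$ stays in $\CC\setminus[1,\infty)$.
\end{itemize}
The price is the analytic input (Stirling and the functional equation of $\zeta$) that the citation avoids.

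\textbf{The step to fix.} Convergence of $\sum_m\zeta(s-m)\mu^m/m!$ does not by itself show that the shifted integral $I_M(\mu)$ tends to $0$. A function whose asymptotic expansion in a sector is identically zero need not vanish; $e^{1/\mu}$ on $\operatorname{Re}\mu<0$ is an example. You must estimate $I_M$ directly on the line $\operatorname{Re}w=-M-\tfrac12$, with $w=-M-\tfrac12+\iu t$, as follows.
\begin{itemize}
\item Write $\Gamma(w)=\pi/\bigl(\sin(\pi w)\Gamma(1-w)\bigr)$, so $|\sin(\pi w)|=\cosh(\pi t)$.
\item Apply the functional equation to $\zeta(s+w)$. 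This gives the factor $(2\pi)^{\operatorname{Re}s-M-1/2}\,|\sin(\pi(s+w)/2)|\,|\Gamma(M+\tfrac32-s-\iu t)|$ times a bounded zeta value.
\item Use Stirling for the ratio $\Gamma(M+\tfrac32-s-\iu t)/\Gamma(M+\tfrac32-\iu t)$.
\end{itemize}
Together these give
\[
|\Gamma(w)\zeta(s+w)|\le C(2\pi)^{-M}(M+|t|)^{|\operatorname{Re}s|}e^{-\pi|t|/2}.
\]
Hence $|I_M(\mu)|\le C'M^{|\operatorname{Re}s|}(|\mu|/2\pi)^M\to0$ whenever $|\mu|<2\pi$ and $|\arg(-\mu)|<\pi/2$. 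With this estimate in place, the analytic continuation and the $s\to r$ limit go through as you wrote them.
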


\begin{proof}
Formula \eqref{p2:eq:polylog-noninteger} is \cite[Eq.~(25.12.12)]{NIST_DLMF}, after substituting $u=e^\mu$ and using the branch conventions fixed above. Formula \eqref{p2:eq:polylog-integer} follows by taking the limit $s\to r\in\mathbb Z_{\geq1}$ in that identity and cancelling the pole of $\Gamma(1-s)$ against the pole of the zeta term with $m=r-1$.
\end{proof}

The polylogarithm formula and the Laurent--Puiseux expansions use different argument conventions in the following. For the latter, cut the $\zeta$-plane along $\R_{\geq0}$. If $\zeta=re^{\iu\vartheta}$, with $r>0$ and $0<\vartheta<2\pi$, set
\[
 \log \zeta:=\log r+\iu\vartheta,
 \qquad
 \zeta^\alpha:=\exp\bigl(\alpha\log \zeta\bigr).
\]
When $\mu=\zeta=\log u$, with the local logarithm normalized by $\log 1=0$, the two conventions are related by
\begin{equation}\label{p2:eq:branch-conversion}
\begin{aligned}
 \arg(-\mu)=\vartheta-\pi,\qquad
 \log \zeta =\log (-\mu)+\pi\iu,\qquad
 \zeta^\alpha=e^{\pi\iu\alpha}(-\mu)^\alpha.
\end{aligned}
\end{equation}
These identities extend to the two sides of $\R_{\ge 0}$ by the corresponding boundary values.

\begin{theorem}\label{p2:thm:big-oh-transfer}
Let $F(u)=\sum_{d\geq0}c_du^d$ be analytic at $u=0$ and have a $\mathfrak D$-continuation at $u=1$. Let $\alpha\in\R$, and take all estimates uniformly as $u\to1$ in that domain. Then
\[
 F(u)=O\bigl((1-u)^\alpha\bigr)
 \quad\Longrightarrow\quad
 c_d=O\bigl(d^{-\alpha-1}\bigr).
\]
The corresponding implication with $o$ in place of $O$ also holds.
\end{theorem}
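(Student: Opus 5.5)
This is the standard $O$-transfer theorem of Flajolet--Odlyzko (\cite[Chapter~VI]{Analytic_Combinatorics}). I would prove it by Cauchy's coefficient formula along a Hankel-type contour adapted to the domain $\mathfrak D(\rho,\theta)$. Write
\[
 c_d=\frac1{2\pi\iu}\oint_\gamma F(u)u^{-d-1}\dd u .
\]
First deform the small circle $|u|=r<1$ into a contour $\gamma$ inside $\mathfrak D(\rho,\theta)$. Fix $1<R<\rho$ and an angle $\phi$ with $\theta<\phi<\pi/2$. The contour $\gamma$ consists of four pieces:
\begin{enumerate}
\item $\gamma_1$, the circle $|u-1|=1/d$ traversed on the arc $|\arg(u-1)|\ge\phi$;
\item $\gamma_2$ and $\gamma_3$, the two rays $u=1+se^{\pm\iu\phi}$ for $1/d\le s\le s_*$, where $s_*$ is chosen so that the ray meets $|u|=R$;
\item $\gamma_4$, the large arc of $|u|=R$ joining the ends of these rays.
\end{enumerate}
Analyticity on $\mathfrak D(\rho,\theta)$ and Cauchy's theorem justify the deformation, for $d$ large enough that $1/d$ is small.

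The estimates go piece by piece, using the hypothesis $|F(u)|\le K|1-u|^\alpha$ for $u\in\mathfrak D(\rho,\theta)$ near $1$. On $\gamma_4$, $F$ is bounded by compactness, because $\gamma_4$ stays away from $u=1$. Hence that contribution is $O(R^{-d})$, which is exponentially small.

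On $\gamma_1$ we have $|F|\le K d^{-\alpha}$ and the length is $O(1/d)$. Also $|u|^{-d-1}\le(1-1/d)^{-d-1}=O(1)$. So this piece contributes $O(d^{-\alpha-1})$.

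On the rays, write $s=t/d$. Since $\phi<\pi/2$, $|1+se^{\iu\phi}|\ge 1+s\cos\phi$, which gives $|u|^{-d}\le e^{-c t}$ for some $c>0$ uniformly while $s\le s_*$ (use $\log(1+x)\ge x/2$ for small $x$ and a compactness bound otherwise). The ray integral is then bounded by
\[
 \frac{K}{d^{\alpha+1}}\int_1^\infty t^{\alpha}e^{-ct}\,\dd t ,
\]
which is $O(d^{-\alpha-1})$ for every real $\alpha$. The lower cutoff $t\ge1$ handles negative $\alpha$. Part of a ray may lie where only boundedness of $F$ is known; that part is handled like $\gamma_4$ and is exponentially small. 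Summing the four pieces gives the $O$ statement.

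For the $o$ version, given $\varepsilon>0$ choose $\delta$ such that $|F(u)|\le\varepsilon|1-u|^\alpha$ for $|u-1|<\delta$. Split each ray at $s=\delta$. The inner part gives $\varepsilon\,C\,d^{-\alpha-1}$ by the same computation. The outer part, together with $\gamma_4$, is exponentially small, since $|u|\ge 1+\delta\cos\phi$ there. For $d\ge 1/\delta$, $\gamma_1$ also lies in the $\varepsilon$-region. Letting $\varepsilon\to0$ gives $o(d^{-\alpha-1})$.

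The main care point is the uniform lower bound $|u|\ge 1+c's$ along the rays, which depends on $\phi<\pi/2$, and fitting the contour inside the given $\mathfrak D$-domain. Taking $\theta<\phi$ and $R<\rho$ settles both. All other steps are routine.
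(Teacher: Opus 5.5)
Your proposal is correct and takes the same route as the paper. The paper proves this statement only by citing \cite[Theorem~VI.3]{Analytic_Combinatorics}, and the proof there is the Hankel-type contour argument you give: a small circle of radius $1/d$, two rays at angle $\phi\in(\theta,\pi/2)$, a large arc of radius $R\in(1,\rho)$, and an $\varepsilon$-splitting of the rays for the little-$o$ version.
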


\begin{proof}
This is the big-oh and little-oh transfer theorem in \cite[Theorem VI.3]{Analytic_Combinatorics}.
\end{proof}

\begin{theorem}\label{p2:thm:transfer}
Let $G(z)=\sum_{d\geq1}c_de^{dz}$ converge for $\Re z<x_*$. Suppose that, for some $\delta>0$, it continues analytically to an open domain containing
\[
 \{\Re z<x_*+\delta,\ 0\leq\Im z\leq2\pi\}
 \setminus\{x_*,x_*+2\pi\iu\}.
\]
In both endpoint expansions write $\zeta=re^{\iu\vartheta}$, using $0\leq\vartheta\leq\pi$ in the upper sector and $\pi\leq\vartheta\leq2\pi$ in the lower sector, and define
\begin{equation}\label{p2:eq:half-power-branch}
 \zeta^{1/2}:=r^{1/2}e^{\iu\vartheta/2},
 \qquad
 \zeta^{j/2}:=(\zeta^{1/2})^j.
\end{equation}
Thus the upper and lower boundary values on $\R_{>0}$ are $+\sqrt r$ and $-\sqrt r$, respectively. Assume that, uniformly as $\zeta\to0$ in the upper sector,
\begin{equation}\label{p2:eq:local-transfer}
  G(x_*+\zeta)
  =
  \sum_{\substack{j\in\Z\\j_0\leq j<2A}}b_j\zeta^{j/2}
  +
  O(\zeta^A),
\end{equation}
where $0\leq\arg\zeta\leq\pi$, and that, uniformly in the lower sector,
\begin{equation}\label{p2:eq:local-transfer-lower}
  G(x_*+2\pi\iu+\zeta)
  =
  \sum_{\substack{j\in\Z\\j_0\leq j<2A}}b_j\zeta^{j/2}
  +
  O(\zeta^A),
\end{equation}
where $\pi\leq\arg\zeta\leq2\pi$. Here $A\in\R\setminus\N$ and $j_0\in\Z$ satisfies $j_0/2<A$. The notation $O(\zeta^A)$ means $O(|\zeta|^A)$ uniformly in the indicated sectors. Then
\begin{equation}\label{p2:eq:transfer-result}
  c_d
  =
  e^{-dx_*}
  \left(
    \sum_{\substack{j\in\Z,\ j_0\leq j<2A\\j\notin2\N}}
    \frac{\iu^j b_j}{\Gamma(-j/2)}
    d^{-j/2-1}
    +
    O(d^{-A-1})
  \right).
\end{equation}
The condition $j\notin2\N$ removes nonnegative even $j$. If the local expansions are convergent half-integer Laurent series with finite principal part, then \eqref{p2:eq:transfer-result} may be truncated at arbitrary order.
\end{theorem}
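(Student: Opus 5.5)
Substitute $u=e^{z-x_*}$ so that $G(z)=F(u)$ with $F(u)=\sum_{d\ge1}c_de^{dx_*}u^d$. Since $G$ converges for $\Re z<x_*$, $F$ is analytic on $|u|<1$. The continuation hypothesis on the strip $\{\Re z<x_*+\delta,\ 0\le\Im z\le2\pi\}$, minus the two endpoint singularities, descends through $u=e^{z-x_*}$ to a holomorphic continuation of $F$ to a region $\{|u|<e^{\delta}\}\setminus[1,e^\delta)$, slit along the image of the boundary segment. The two ends $x_*$ and $x_*+2\pi\iu$ correspond to the two banks of the slit near $u=1$. In particular $F$ has a $\mathfrak D(\rho,\theta)$-continuation for any $\rho<e^\delta$ and any $\theta>0$. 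Near $u=1$ put $\mu=\log u$ with $\log1=0$.

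A point of $\mathfrak D$ near $1$ with $\Im u\ge0$ comes from $z=x_*+\zeta$ with $0\le\arg\zeta\le\pi$. A point with $\Im u<0$ comes from $z=x_*+2\pi\iu+\zeta$ with $\pi\le\arg\zeta\le2\pi$. In both cases $\zeta=\mu$. The two hypotheses \eqref{p2:eq:local-transfer} and \eqref{p2:eq:local-transfer-lower} therefore combine into a single statement:
\[
F(u)=\sum_{j_0\le j<2A}b_j\mu^{j/2}+O(|\mu|^A)
\]
uniformly on $\mathfrak D$ near $1$, where $\mu^{j/2}$ uses the cut-along-$\R_{\ge0}$ branch \eqref{p2:eq:half-power-branch}. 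By \eqref{p2:eq:branch-conversion}, $\mu^{j/2}=e^{\pi\iu j/2}(-\mu)^{j/2}=\iu^j(-\mu)^{j/2}$ in principal-branch terms. This single-valued form on the slit region is what makes the two sectors glue consistently.

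Next, compare $F$ with a polylogarithm combination. Set
\[
P(u):=\sum_{j_0\le j<2A,\ j\notin2\N}\frac{\iu^jb_j}{\Gamma(-j/2)}\Li_{j/2+1}(u).
\]
Theorem~\ref{p2:thm:polylog-expansion}, formula \eqref{p2:eq:polylog-noninteger} with $s=j/2+1\notin\Z_{\ge1}$ (i.e.\ $j$ odd, or $j$ even and negative), gives
\[
\Li_{j/2+1}(e^\mu)=\Gamma(-j/2)(-\mu)^{j/2}+(\text{power series in }\mu).
\]
Hence each term of $P$ reproduces exactly $\iu^jb_j(-\mu)^{j/2}=b_j\mu^{j/2}$ plus something analytic at $\mu=0$. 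The case $j$ even and negative needs care: $\Gamma(-j/2)$ is then a pole, so $1/\Gamma(-j/2)=0$. Such terms $b_j\mu^{j/2}$ are pure Laurent poles. I will handle them directly: $\mu^{-m}$ is expanded in $(1-u)^{-m}$ plus lower-order terms, and the coefficients of $(1-u)^{-m}$ grow like $d^{m-1}$. These are exactly the values given by the limit of $1/\Gamma(-j/2)$ against the $\Gamma$-factor, and I would treat them by the formula $\Li_{1-m}$ for $m\ge1$ (rational in $u$). The terms with $j\in2\N$ are analytic, contribute nothing singular, and are discarded, which matches the exclusion $j\notin2\N$.

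Now $F-P$ is $\mathfrak D$-continuable and satisfies
\[
F-P=(\text{analytic at }1)+O(|\mu|^A).
\]
The analytic part is only a power series of infinite length. Since $A\notin\N$, I truncate it at order $<A$ into a polynomial in $\mu$. A polynomial in $\mu=\log u$ is analytic in $|u|<\rho$, so its coefficients decay exponentially, and the remainder is $O(|\mu|^A)=O(|1-u|^A)$. Theorem~\ref{p2:thm:big-oh-transfer} then gives $[u^d](F-P)=O(d^{-A-1})$. Finally, $[u^d]\Li_{s}(u)=d^{-s}$, so
\[
[u^d]P=\sum\frac{\iu^jb_j}{\Gamma(-j/2)}d^{-j/2-1}.
\]
Multiplying by $e^{-dx_*}$ gives \eqref{p2:eq:transfer-result}. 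For convergent expansions with finite principal part, $A$ can be taken arbitrarily large, avoiding integers, which gives the truncation claim.

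The main obstacle is bookkeeping rather than analysis. Two points need checking: that the upper and lower local expansions agree with one analytic function on $\mathfrak D$ under the stated branch conventions, and that the constant $\iu^j$ comes out correctly from \eqref{p2:eq:branch-conversion}. Care is also needed that the uniformity of the $O$-terms in closed sectors is exactly what Theorem~\ref{p2:thm:big-oh-transfer} requires.
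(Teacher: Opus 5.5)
Your overall route is the paper's: pass to $u=e^{z-x_*}$, glue the two endpoint sectors into one $\mathfrak D$-continuation via $\zeta=\mu=\log u$ and $\mu^{j/2}=\iu^j(-\mu)^{j/2}$, subtract the polylogarithm combination $P$, and apply Theorem~\ref{p2:thm:big-oh-transfer}. However, your final step fails as written.

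\textbf{The truncation step.} You truncate the holomorphic part of $F-P$ to a polynomial $Q(\mu)$ in $\mu=\log u$ and claim it ``is analytic in $|u|<\rho$, so its coefficients decay exponentially.'' This is false. The function $\log u$ has a branch point at $u=0$, so a nonconstant $Q(\log u)$ has no Taylor expansion at $0$. Hence $F-P-Q(\log u)$ is not a power series $\sum_d c_du^d$, and Theorem~\ref{p2:thm:big-oh-transfer} cannot be applied to it.

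The repair is the paper's. Let $h$ be the function holomorphic near $u=1$ with $F-P=h+O(|\mu|^A)$ there. Choose an integer $M>\max\{A,0\}$, and subtract the Taylor polynomial $T_{M-1}h$ of $h$ at $u=1$ in powers of $1-u$.
\begin{itemize}
\item $T_{M-1}h$ is a polynomial in $u$, so it alters only the coefficients with $d<M$.
\item $F-P-T_{M-1}h$ is analytic at $0$ and $\mathfrak D$-continuable.
\item Near $u=1$ we have $h-T_{M-1}h=O(|1-u|^M)$ and $\mu=-(1-u)+O((1-u)^2)$.
\end{itemize}
Together these give $F-P-T_{M-1}h=O(|1-u|^A)$ in a slightly smaller $\mathfrak D$-domain, and now the big-oh transfer applies.

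\textbf{The negative even case.} Your remark that for negative even $j$ ``$\Gamma(-j/2)$ is then a pole, so $1/\Gamma(-j/2)=0$'' is wrong. For $j=-2m$ with $m\geq1$, $\Gamma(-j/2)=\Gamma(m)=(m-1)!$ is finite. Read literally, your remark would drop the Laurent poles $b_{-2m}\mu^{-m}$ from $P$, and then $F-P$ would not be of the form $h+O(|\mu|^A)$.

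Your own earlier use of \eqref{p2:eq:polylog-noninteger} already covers these terms. With $s=1-m\notin\Z_{\geq1}$ it gives $\Li_{1-m}(e^\mu)=(m-1)!\,(-\mu)^{-m}+(\text{analytic})$. The resulting coefficient $(-1)^mb_{-2m}/(m-1)!$ is exactly $\iu^jb_j/\Gamma(-j/2)$, so no limiting argument or separate treatment is needed. The poles of $\Gamma(-j/2)$ occur precisely at $j\in2\N$. Those are the analytic terms $b_j\mu^{j/2}$, which belong in $h$, and this is why the statement excludes them.
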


\begin{proof}
Near the image of $x_*$, define $F(u)=G(x_*+\log u)$, and near the image of $x_*+2\pi\iu$, define $F(u)=G(x_*+2\pi\iu+\log u)$, where $\log 1=0$. For $|u|<1$ sufficiently close to $1$, both expressions equal $\sum_{d\geq1}e^{dx_*}c_du^d$. Therefore, the two expressions glue to a single-valued $\mathfrak D$-continuation at $u=1$.

Put $\mu=\log u$. If $\alpha\in\R\setminus\N$, the relations of branch conventions \eqref{p2:eq:branch-conversion} and Theorem~\ref{p2:thm:polylog-expansion} give
\begin{equation*}
  \zeta^\alpha
  =
  \frac{e^{\pi\iu\alpha}}{\Gamma(-\alpha)}
  \Li_{\alpha+1}(u)
  +
  H_\alpha(u),
\end{equation*}
where $H_\alpha$ is holomorphic at $u=1$. For $\alpha\in\N$, the function $\zeta^\alpha=(\log u)^\alpha$ is holomorphic at $u=1$ and is absorbed into the holomorphic Taylor part. Taking $\alpha=j/2$ in the remaining terms gives $e^{\pi\iu\alpha}=\iu^j$. Since $[u^d]\Li_s(u)=d^{-s}$, the singular terms give the sum in \eqref{p2:eq:transfer-result}. Collect the finitely many holomorphic terms in $H(u)$, choose an integer $M>\max\{A,0\}$, and subtract the Taylor polynomial of $H$ at $u=1$ of degree $M-1$ in $1-u$. The remaining holomorphic term is $O((1-u)^M)=o((1-u)^A)$, whereas the Taylor polynomial has only finitely many coefficients. Since $\log u=-(1-u)+O((1-u)^2)$, the remainder is $O((1-u)^A)$ in a smaller fixed $\mathfrak D$-domain. Theorem~\ref{p2:thm:big-oh-transfer} proves the error estimate. Successive truncations prove the last assertion.
\end{proof}

\section{Genus zero}\label{p2:sec:g0}

We now specialize the abstract framework to the quantum cohomology of $\P{2}$, derive the one-variable differential equation, and identify its dominant singularity.

Write an element of $H^*(\P{2},\CC)$ as $t=t_0\one+t_1H+t_2H^2$. In the flat basis $(\one,H,H^2)$, the Poincar\'e pairing is $\eta_{\alpha\beta}=\delta_{\alpha+\beta,2}$. The primary GW potentials are
\begin{equation}\label{p2:eq:potentials}
 F_g(t)=
 \begin{cases}
 \displaystyle
 \frac12t_0^2t_2+\frac12t_0t_1^2+
 \sum_{d\geq1}\frac{N_{0,d}}{(3d-1)!}
 e^{dt_1}t_2^{3d-1}Q^d,&g=0,\\[5pt]
 \displaystyle
 -\frac18t_1+
 \sum_{d\geq1}\frac{N_{1,d}}{(3d)!}
 e^{dt_1}t_2^{3d}Q^d,&g=1,\\[5pt]
 \displaystyle
 \sum_{d\geq1}\frac{N_{g,d}}{(3d-1+g)!}
 e^{dt_1}t_2^{3d-1+g}Q^d,&g\geq2.
 \end{cases}
\end{equation}

\begin{remark}
We justify the degree-zero terms in \eqref{p2:eq:potentials}. In the stable range $2g-2+n>0$, the constant-map formula is \cite{Hodge_integrals_and_Gromov-Witten_theory}
\[
 \Mbar_{g,n}(\P{2},0)
 \cong
 \Mbar_{g,n}\times\P{2},
 \qquad
 [\Mbar_{g,n}(\P{2},0)]^{\mathrm{vir}}
 =
 c_{2g}(\mathbb E^\vee\boxtimes T_{\P{2}})
 \cap[\Mbar_{g,n}\times\P{2}].
\]
Here $\mathbb E=\pi_*\omega_\pi$ is the rank-$g$ Hodge bundle on $\Mbar_{g,n}$ for the universal curve $\pi$, and $\lambda_i=c_i(\mathbb E)$.
In genus zero this gives the classical cup-product potential $\frac12t_0^2t_2+\frac12t_0t_1^2$. In genus one,
\[
 c_2(\mathbb E^\vee\boxtimes T_{\P{2}})
 =
 c_2(T_{\P{2}})
 -\lambda_1c_1(T_{\P{2}})
 +\lambda_1^2.
\]
Consequently, by \cite[Theorem~1.1]{On_degree_zero_elliptic_orbifold_Gromov-Witten_invariants},
\[
 \lang{H}_{1,1,0}
 =
 -\left(\int_{\Mbar_{1,1}}\lambda_1\right)
  \left(\int_{\P{2}}Hc_1(T_{\P{2}})\right)
 =
 -\frac18.
\]
The degree-zero term in genus one follows. For $g\geq2$, the string and divisor equations reduce the calculation to invariants with $n$ insertions of $H^2$. The dimension equation is  
\[
 2n
 =
 \operatorname{vdim}_{\CC}\Mbar_{g,n}(\P{2},0)
 =
 g-1+n,
\]
so $n=g-1$. If $g\geq3$, $H^{2n}$ vanishes on the $\P{2}$-factor. In the remaining case $(g,n)=(2,1)$, the target-degree-zero part of the obstruction Euler class is $\lambda_2^2$, and
\[
 \lang{H^2}_{2,1,0}
 =
 \int_{\Mbar_{2,1}}\lambda_2^2
 =
 0
\]
because the Hodge bundle is pulled back from the three-dimensional stack $\Mbar_{2,0}$. Then the degree-zero term for $g\ge 2$ follows.
\end{remark}

The Euler vector field and Hodge grading operator are
\begin{equation*}
 E=t_0\partial_{t_0}+3\partial_{t_1}-t_2\partial_{t_2},
 \qquad
 \mu=\diag(-1,0,1).
\end{equation*}
At $t_0=t_2=0$, the matrix of $E*$ is
\begin{equation}\label{p2:eq:small-quantum}
 \E|_{t_0=t_2=0}=
 \begin{pmatrix}
 0&0&3Qe^{t_1}\\
 3&0&0\\
 0&3&0
 \end{pmatrix}.
\end{equation}
Its eigenvalues are distinct when $Qe^{t_1}\neq0$, so the quantum cohomology Frobenius manifold is generically semisimple.

The WDVV equation for $\P{2}$ is
\begin{equation}\label{p2:eq:plane-wdvv}
 F_{112}^2=F_{111}F_{122}+F_{222}.
\end{equation}
Consider the restriction to
\begin{equation}
 S:=\{t_0=0,\ t_1=z,\ t_2=1,\ Q=1\}.
\end{equation}
In what follows, the notation $F_g(z)$, with $z$ a complex variable, denotes the positive-degree series
\begin{equation}
  F_g(z) := \sum_{d\geq1}
  \frac{N_{g,d}}{(3d-1+g)!}e^{dz},
\end{equation}
as in \eqref{p2:eq:Fg-definition}, whereas $F_g(t)$, with $t=(t_0,t_1,t_2)$, denotes the GW potential. The restrictions of the GW potentials in \eqref{p2:eq:potentials} are
\[
 F_0(t)|_S=F_0(z),\qquad
 F_1(t)|_S=-\frac z8+F_1(z),\qquad
 F_g(t)|_S=F_g(z)\quad(g\geq2).
\]
Then
\begin{align}
 F_{111}|_S&=F_0''',&
 F_{112}|_S&=3F_0'''-F_0'',\nonumber\\
 F_{122}|_S&=9F_0'''-9F_0''+2F_0',&
 F_{222}|_S&=27F_0'''-54F_0''+33F_0'-6F_0.
\label{p2:eq:derivative-table}
\end{align}
Substitution into \eqref{p2:eq:plane-wdvv} gives
\begin{equation}\label{p2:eq:scalar-wdvv}
 (27+2F_0'-3F_0'')F_0'''
 =
 6F_0-33F_0'+54F_0''+(F_0'')^2.
\end{equation}
Put
\begin{equation}\label{p2:eq:XY}
 \mathcal X:=6F_0-33F_0'+54F_0''+(F_0'')^2,
 \qquad
 \mathcal Y|_S=27+2F_0'-3F_0''.
\end{equation}
A direct determinant calculation from \eqref{p2:eq:Y-definition} gives the restriction of the cyclic determinant $\mathcal Y$. From now on, when the restriction to the one-variable slice $S$ is understood, we suppress the symbol ${}|_S$ and write it simply as $\mathcal Y$. Comparing coefficients in \eqref{p2:eq:scalar-wdvv} gives Kontsevich's recursion with $N_{0,1}=1$; see also \cite{Quantum_intersection_rings}. The estimates proved in \cite[Lemma~2.1]{Asymptotic_of_enumerative_invariants_in_CP2} give
\begin{equation}\label{p2:eq:coarse-bound}
 27^{-d}d^{-7/2}
 \leq
 \frac{N_{0,d}}{(3d-1)!}
 \leq
 3(4/15)^d d^{-7/2}.
\end{equation}
In particular, every $N_{0,d}$ is positive. Consequently, $F_0$ has a finite real abscissa of convergence $x_0$, with $\log(15/4)\leq x_0\leq\log27$.

The following proposition is a sharpened form of \cite[Lemma~3.1]{Asymptotic_of_enumerative_invariants_in_CP2}; we include the proof in order to fix the branch convention and to compute the coefficient $a_5$ explicitly.

\begin{proposition}\label{p2:prop:fold}
On the interval $(-\infty,x_0)$, we have
\[
 \mathcal X>0,\qquad \mathcal Y>0.
\]
The functions $F_0,F_0',F_0''$ have finite limits at $x_0$, and
\[
 \mathcal Y(x_0)=0,\qquad \mathcal X(x_0)>0.
\]
The function $F_0$ admits a locally convergent expansion on the upper side of a slit neighborhood of $x_0$. Write $\zeta=z-x_0$ and compute its powers with the upper branch in \eqref{p2:eq:half-power-branch}. Then
\begin{equation}\label{p2:eq:fold-expansion}
 F_0(x_0+\zeta)
 =
 F_0(x_0)+F_0'(x_0)\zeta+\frac12F_0''(x_0)\zeta^2
 +a_5\zeta^{5/2}
 +\sum_{j\geq6}a_j\zeta^{j/2},
\end{equation}
where
\begin{equation}\label{p2:eq:a5}
 a_5
 =
 \frac{4\iu}{15}
 \sqrt{\frac{2\mathcal X(x_0)}3}
 =
 \iu\sqrt{
 \frac{
  32\bigl(4F_0'(x_0)^2+54F_0(x_0)+135F_0'(x_0)+5103\bigr)
 }{6075}
 }
 \in\iu\R_{>0}.
\end{equation}
The lower-side expansion at $x_0+2\pi\iu$ uses the same coefficients $a_j$, computed with the lower branch \eqref{p2:eq:half-power-branch}; this follows from periodicity.
\end{proposition}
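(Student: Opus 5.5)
Our approach is to treat \eqref{p2:eq:scalar-wdvv} as a real ODE $\mathcal Y F_0'''=\mathcal X$ on $(-\infty,x_0)$ and to exploit positivity of the coefficients. As $z\to-\infty$ we have $F_0^{(k)}\to0$, so $\mathcal Y\to27$. Since $\mathcal Y(z)=27+\sum_d\frac{N_{0,d}}{(3d-1)!}(2d-3d^2)e^{dz}$ is real-analytic and decreasing (all coefficients $2d-3d^2<0$), and since $F_0$ and its derivatives are increasing and positive, $\mathcal Y$ decreases from $27$. For $\mathcal X$ we will use the ODE itself. Differentiating gives $\mathcal X'=6F_0'-33F_0''+54F_0'''+2F_0''F_0'''$, and $\mathcal X'=\mathcal Y'F_0'''+\mathcal YF_0''''$. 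A cleaner route is to note that $\mathcal X=\mathcal Y F_0'''$ and $F_0'''>0$ termwise, so $\mathcal X>0$ wherever $\mathcal Y>0$. Thus it suffices to show $\mathcal Y>0$ on $(-\infty,x_0)$. Suppose instead that $\mathcal Y(z_1)=0$ at some first $z_1<x_0$. Then $\mathcal X(z_1)=\mathcal Y(z_1)F_0'''(z_1)=0$, and $F_0'''$ is finite there because $z_1$ lies inside the disc of convergence. But $\mathcal X(z_1)=6F_0-33F_0'+54F_0''+(F_0'')^2$ evaluated at $z_1$ must be compared with the expansion. Here I would try to show that $\mathcal X>0$ directly from $\mathcal X=\mathcal YF_0'''$ together with monotonicity, arguing that $\mathcal X'=\mathcal Y'F_0'''+\mathcal YF_0''''$ cannot force $\mathcal X$ to zero before $\mathcal Y$. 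Failing that, I would use the Tian--Wei argument: at a zero of $\mathcal Y$ with $\mathcal X\neq0$ the solution blows up ($F_0'''\sim\mathcal X/\mathcal Y$), which contradicts analyticity inside the disc. A simultaneous zero would make the point a singular point of the first-order system whose analytic solutions are rigid, and this can be ruled out by a local uniqueness comparison.

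Next we establish finite limits and $\mathcal Y(x_0)=0$. By the coarse bound \eqref{p2:eq:coarse-bound}, the coefficients of $F_0,F_0',F_0''$ are $O(e^{-dx_0}d^{-3/2})$ at worst once we know the $d^{-7/2}$ rate persists with base $e^{-x_0}$; the root test only gives $e^{-x_0}$, however. I would instead argue monotonically. $F_0''$ is increasing and $\mathcal Y=27+2F_0'-3F_0''>0$ gives $3F_0''<27+2F_0'$. Together with the ODE this bounds the growth, and Pringsheim's theorem (positive coefficients) implies that $x_0$ is a singularity. If $\mathcal Y(x_0^-)>0$, the ODE would continue $F_0$ analytically past $x_0$ (nonsingular analytic ODE with bounded data), a contradiction. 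Hence $\mathcal Y(x_0)=0$, the limits are finite, and $\mathcal X(x_0)\ge0$. Showing strict positivity $\mathcal X(x_0)>0$ is the main obstacle. I would try showing that $\mathcal X$ is increasing near $x_0$: since $F_0'''\to+\infty$, we have $\mathcal X'\ge 54F_0'''-33F_0''+6F_0'>0$ eventually, and $\mathcal X>0$ before $x_0$, so $\mathcal X(x_0)>0$.

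For the local expansion, we write $w=F_0''$ as the unknown and treat $(F_0,F_0',F_0'')$ as a first-order system $\mathcal Y\,\dd F_0''/\dd z=\mathcal X$. Swapping the roles of variables, $z$ and $F_0,F_0'$ become analytic functions of $y:=\mathcal Y$, because $\dd z/\dd y=\mathcal Y/(\mathcal Y'\cdot)$. Concretely, $\mathcal Y'=2F_0''-3F_0'''$, so $\dd\mathcal Y/\dd z=2F_0''-3\mathcal X/\mathcal Y$, giving $\dd z/\dd\mathcal Y=\mathcal Y/(2F_0''\mathcal Y-3\mathcal X)$. This is analytic near $\mathcal Y=0$ with a simple zero there, and $F_0''$ is analytic in $\mathcal Y$ with $\dd F_0''/\dd\mathcal Y=\mathcal X/(2F_0''\mathcal Y-3\mathcal X)$. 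Thus $z-x_0=-\mathcal Y^2/(6\mathcal X(x_0))+O(\mathcal Y^3)$, which is a fold. Inverting gives $\mathcal Y$ as a convergent series in $\zeta^{1/2}$, with $\mathcal Y\approx\pm\sqrt{-6\mathcal X(x_0)\zeta}$. Substituting back, $F_0''=F_0''(x_0)-\mathcal Y/3+\dots$ contains a $\zeta^{1/2}$ term, and integrating twice yields the $\zeta^{5/2}$ term with $a_5=\frac{4}{15}\cdot\frac{1}{3}\cdot(\text{coefficient})$. The upper branch is fixed by requiring $\mathcal Y>0$ for real $\zeta<0$, that is $\zeta^{1/2}=\iu\sqrt{|\zeta|}$. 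This gives $a_5=\frac{4\iu}{15}\sqrt{2\mathcal X(x_0)/3}$. The second closed form then follows by substituting $F_0''(x_0)=(27+2F_0'(x_0))/3$ into $\mathcal X$. The lower-side statement follows from $2\pi\iu$-periodicity of $F_0$ in $z$.
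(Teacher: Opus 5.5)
\textbf{Verdict: there is a genuine gap in your first step; the fold analysis itself is sound.}

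Using $y=\mathcal Y$ as the independent variable is equivalent to the paper's desingularization $\dd\tau/\dd z=1/\mathcal Y$. The two parameters are related by $\dd\mathcal Y/\dd\tau=2F_0''\mathcal Y-3\mathcal X$, which equals $-3\mathcal X(x_0)\neq0$ at the fold. Your relation $z-x_0=-\mathcal Y^2/(6\mathcal X(x_0))+O(\mathcal Y^3)$, the branch choice $\zeta^{1/2}=\iu\sqrt{|\zeta|}$ for $\zeta<0$, and the resulting value of $a_5$ are all correct.

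\textbf{The gap.} You never prove $\mathcal X>0$ or $\mathcal Y>0$ on $(-\infty,x_0)$. Everything downstream depends on this: the finite limits, $\mathcal X(x_0)\neq0$, and analyticity of $\dd z/\dd\mathcal Y$ at $\mathcal Y=0$. You reduce to showing $\mathcal Y>0$ first, but your fallback arguments do not close this.
\begin{enumerate}
\item At any $z_1<x_0$ the function $F_0'''$ is finite, since $z_1$ is inside the domain of convergence. So a zero of $\mathcal Y$ there is automatically a zero of $\mathcal X=\mathcal Y F_0'''$. The ``Tian--Wei blow-up'' alternative (a zero of $\mathcal Y$ with $\mathcal X\neq0$) therefore never occurs.
\item That leaves only simultaneous zeros. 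Your claim that these ``can be ruled out by a local uniqueness comparison'' is unsupported. At such a point the equation $\mathcal Y F_0'''=\mathcal X$ degenerates to $0=0$, and you give no argument excluding an analytic solution through it.
\end{enumerate}

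\textbf{The missing idea is termwise positivity.} The coefficient of $e^{dz}$ in $6F_0-33F_0'+54F_0''$ is $(54d^2-33d+6)N_{0,d}/(3d-1)!$. This is positive because the quadratic has negative discriminant. Adding $(F_0'')^2>0$ gives $\mathcal X>0$ directly, and then $\mathcal Y=\mathcal X/F_0'''>0$. The same observation closes your other loose ends:
\begin{enumerate}
\item Since $F_0''\geq F_0'$ termwise, you get $F_0''\leq 3F_0''-2F_0'<27$. This is the bound you left as ``together with the ODE this bounds the growth'', and it gives the finite limits at $x_0$.
\item Your lower bound $54F_0'''-33F_0''+6F_0'$ for $\mathcal X'$ is itself termwise positive. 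So $\mathcal X$ is increasing on the whole interval, and $\mathcal X(x_0)\geq\mathcal X(z)>0$. You do not need $F_0'''\to+\infty$, which you had not justified.
\end{enumerate}
With these repairs, the Pringsheim argument for $\mathcal Y(x_0)=0$ and your fold expansion go through as written.
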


\begin{proof}
Positivity of the coefficients gives $0<F_0<F_0'<F_0''<F_0'''$ on the real convergence interval. The coefficient of $e^{dz}$ in $6F_0-33F_0'+54F_0''$ is $(6-33d+54d^2)N_{0,d}/(3d-1)!$, hence $\mathcal X>0$. Equation \eqref{p2:eq:scalar-wdvv} then gives $\mathcal Y>0$, and therefore
\[
 0<F_0<F_0'<F_0''<3F_0''-2F_0'<27.
\]
The first three functions have finite values at $x_0$. We claim that
\begin{equation}
  \mathcal Y(x_0) = 27+2F_0'(x_0)-3F_0''(x_0) =0.
\end{equation}
Indeed, if $\mathcal Y(x_0)\neq0$, the analytic ordinary differential equation for $(F_0,F_0',F_0'')$ would continue the solution through $x_0$. This contradicts Pringsheim's theorem applied to the positive power series in $q=e^z$. Thus $\mathcal Y(x_0)=0$, while the positivity gives $\mathcal X(x_0)>0$.

Set $u=F_0$, $v=\partial_zF_0$, and $w=\partial_z^2F_0$, and introduce a new variable $\tau$ by
\[
 \frac{\dd\tau}{\dd z}=\frac1{\mathcal Y}.
\]
Along the real solution,
\[
 \frac{\dd w}{\dd z}=\frac{\mathcal X}{\mathcal Y},
 \qquad
 \frac{\dd\tau}{\dd w}=\frac1{\mathcal X}.
\]
Since $w$ has a finite limit at $x_0$ and $\mathcal X\to\mathcal X(x_0)>0$, the function $\tau$ has a finite limit $\tau_0$ as $z\uparrow x_0$. In the parameter $\tau$ the equation becomes
\begin{equation}\label{p2:eq:desingularized-system}
 \frac{\dd}{\dd\tau}
 \begin{pmatrix}u\\v\\w\\z\end{pmatrix}
 =
 \begin{pmatrix}
 v\mathcal Y\\w\mathcal Y\\\mathcal X\\\mathcal Y
 \end{pmatrix}.
\end{equation}
It extends holomorphically through $\tau_0$. At that point,
\[
 \frac{\dd z}{\dd\tau}(\tau_0)=0,\qquad
 \frac{\dd^2z}{\dd\tau^2}(\tau_0)
 =-3\mathcal X(x_0)<0,\qquad
 \frac{\dd w}{\dd\tau}(\tau_0)=\mathcal X(x_0)>0.
\]
Inverting the convergent Taylor series expresses $\tau-\tau_0$ as a convergent series in $(z-x_0)^{1/2}$. Since
\[
 \frac{\dd u}{\dd\tau}
 =v\frac{\dd z}{\dd\tau},
 \qquad
 \frac{\dd v}{\dd\tau}
 =w\frac{\dd z}{\dd\tau},
\]
the terms of orders $1/2$ and $3/2$ in $u$ vanish. Moreover,
\[
 F_0''(x_0+\zeta)
 =
 F_0''(x_0)
 +\iu\sqrt{\frac{2\mathcal X(x_0)}3}\,\zeta^{1/2}
 +O(\zeta).
\]
Integrating twice proves \eqref{p2:eq:fold-expansion} and \eqref{p2:eq:a5}.  Periodicity gives the expansion at $x_0+2\pi\iu$ with the same coefficients.
\end{proof}

\begin{remark}\label{p2:rem:puiseux-recursion}
Set
\[
 a_0:=F_0(x_0),\qquad
 a_1=a_3:=0,\qquad
 a_2:=F_0'(x_0),\qquad
 a_4:=\frac12F_0''(x_0).
\]
Substitution of \eqref{p2:eq:fold-expansion} into
\eqref{p2:eq:scalar-wdvv} determines every $a_k$, $k\geq6$,
recursively:
\begin{align*}
 45k(k-2)(k-3)a_5a_k
 ={}&
 -192a_{k-5}
 +528(k-3)a_{k-3}\\
 &+
 16(k-1)(k-3)\bigl((k-7)a_4-27\bigr)a_{k-1}\\
 &+
 \sum_{\substack{k_1+k_2=k-7\\k_1,k_2\geq0}}
 \Bigl[
 2(k_1+5)(k_2+5)(k_2+3)(2k_2-k_1-1)\\
 &\hspace{23mm}\cdot a_{k_1+5}a_{k_2+5}\\
 &\hspace{7mm}
 -3(k_1+6)(k_2+6)(k_1+4)(k_2+4)(k_2+2)\\
 &\hspace{23mm}\cdot a_{k_1+6}a_{k_2+6}
 \Bigr].
\end{align*}
Induction gives $a_{2m}\in\R$ and $a_{2m-1}\in\iu\R$ for $m\geq1$.
\end{remark}

We next show that, modulo the $2\pi\iu$-periodicity, $x_0$ is the unique dominant singularity. Indeed, if $\Re z<x_0$, then
\[
 \abs{3F_0''(z)-2F_0'(z)}
 \leq
 \sum_{d\geq1}(3d^2-2d)
 \frac{N_{0,d}}{(3d-1)!}e^{d\Re z}
 <
 27.
\]
Note that the series for $F_0$, $F_0'$, and $F_0''$ converge absolutely and uniformly on $\{x_0+\iu y:0\leq y\leq2\pi\}$. For $z=x_0+\iu y$ with $0<y<2\pi$, 
\begin{align}
 \abs{3F_0''(z)-2F_0'(z)}
 <
 \sum_{d\geq1}(3d^2-2d)
 \frac{N_{0,d}}{(3d-1)!}e^{dx_0} =3F_0''(x_0)-2F_0'(x_0)=27.
\label{p2:eq:boundary-strict}
\end{align}
Hence $\mathcal Y\neq0$ in the half-strip $\{\Re z\leq x_0,\ 0\leq\Im z\leq2\pi\} \setminus\{x_0,x_0+2\pi\iu\}$. By \eqref{p2:eq:scalar-wdvv}, it follows that the function $F_0(z)$ can be analytically continued locally around any point in the vertical segment $\{x_0+\iu y:0 < y < 2\pi\}$. Proposition~\ref{p2:prop:fold} gives the two endpoint neighborhoods. After smaller endpoint neighborhoods are removed, the remaining part of the vertical segment is compact. A finite subcover of the preceding ODE neighborhoods, together with the original left half-plane, therefore gives a single $\delta>0$ for which $F_0$ continues analytically to an open domain containing
\[
 \{\Re z<x_0+\delta,\ 0\leq\Im z\leq2\pi\}
 \setminus\{x_0,x_0+2\pi\iu\}.
\]
Therefore, $F_0(z)$ satisfies the conditions in Theorem~\ref{p2:thm:transfer}.

\begin{corollary}\label{p2:cor:g0}
For the genus-zero case of $\P{2}$, the expansion \eqref{p2:eq:g0-main} in Theorem~\ref{p2:thm:main} holds, and
\begin{equation}\label{p2:eq:g0-leading}
 a_{0,0}
 =
 \frac{\iu a_5}{\Gamma(-5/2)}
 =
 -\frac{15\iu a_5}{8\sqrt\pi}
 >0.
\end{equation}
\end{corollary}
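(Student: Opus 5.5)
The proof applies Theorem~\ref{p2:thm:transfer} directly to $G(z)=F_0(z)=\sum_{d\ge1}c_de^{dz}$ with $c_d=N_{0,d}/(3d-1)!$ and $x_*=x_0$. The hypotheses have just been verified.
\begin{itemize}
\item $F_0$ converges for $\Re z<x_0$, with $x_0$ the abscissa of convergence.
\item By the argument preceding the corollary, $F_0$ continues analytically to an open domain containing $\{\Re z<x_0+\delta,\ 0\le\Im z\le 2\pi\}\setminus\{x_0,x_0+2\pi\iu\}$.
\item By Proposition~\ref{p2:prop:fold}, $F_0$ has convergent half-integer Puiseux expansions at both endpoints, with the same coefficients $b_j=a_j$, computed with the upper and lower branches of \eqref{p2:eq:half-power-branch}. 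Here $a_0=F_0(x_0)$, $a_1=a_3=0$, $a_2=F_0'(x_0)$, $a_4=\tfrac12F_0''(x_0)$, and $a_5$ is given in \eqref{p2:eq:a5}.
\end{itemize}
The expansion is convergent with no principal part ($j_0=0$), so the last clause of Theorem~\ref{p2:thm:transfer} allows truncation at any order $A\notin\N$.

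Substituting into \eqref{p2:eq:transfer-result}, the terms with $j$ even and nonnegative drop out. This removes $a_0,a_2,a_4$ and every $a_{2m}$. Since $a_1=a_3=0$, the surviving terms are the odd $j\ge5$, giving
\[
 c_d=e^{-dx_0}\sum_{\substack{j\ge5\\ j\ \text{odd}}}\frac{\iu^ja_j}{\Gamma(-j/2)}d^{-j/2-1}+\cdots.
\]
Write $j=2k+5$. Then $d^{-j/2-1}=d^{-7/2}d^{-k}$, so we get \eqref{p2:eq:g0-main} with
\[
 a_{0,k}=\frac{\iu^{2k+5}a_{2k+5}}{\Gamma(-(2k+5)/2)}.
\]
The error term $O(d^{-A-1})$, with $A=N+2$ (or just below, to avoid integers), is $O(d^{-7/2-N})$ after factoring out $d^{-7/2}$. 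This gives the stated remainder $O(d^{-N})$.

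For the leading coefficient, $k=0$ gives $\iu^5a_5/\Gamma(-5/2)=\iu a_5/\Gamma(-5/2)$. Using $\Gamma(-5/2)=-8\sqrt\pi/15$, this equals $-15\iu a_5/(8\sqrt\pi)$. Since $a_5\in\iu\R_{>0}$, we have $-\iu a_5>0$, so $a_{0,0}>0$.

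Remark~\ref{p2:rem:puiseux-recursion} gives $a_{2m-1}\in\iu\R$, so $\iu^{2k+5}a_{2k+5}$ is real and every $a_{0,k}$ is real. This is not needed for the statement, but it is a useful consistency check.

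The only delicate point is the branch bookkeeping. The transfer formula \eqref{p2:eq:transfer-result} uses $\zeta^{j/2}$ on the upper branch, matching the upper-branch convention in Proposition~\ref{p2:prop:fold}. The sign of $a_{0,0}$ depends on this convention, so I would recheck it against \eqref{p2:eq:branch-conversion}. As an independent check, the limit $\sqrt[d]{c_d}\to e^{-x_0}$ and the positivity of $N_{0,d}$ are consistent with $a_{0,0}>0$. With that settled, the corollary is an immediate application, and no step is a genuine obstacle.
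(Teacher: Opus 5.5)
Your proposal is correct and is the paper's argument: apply Theorem~\ref{p2:thm:transfer} to the convergent fold expansion \eqref{p2:eq:fold-expansion}, with $b_j=a_j$ and $j_0=0$, and read off the $j=5$ term. Your branch bookkeeping and sign check agree with \eqref{p2:eq:branch-conversion}, and positivity of the $N_{0,d}$ also gives $a_{0,0}\geq0$ independently. One arithmetic fix is needed: with $A=N+2$ (or just below) the remainder $O(d^{-A-1})$ is only $O(d^{-N-3+\epsilon})$, not $O(d^{-N-7/2})$, so take instead $A=N+5/2\notin\N$, for which the sum still stops at $j=2N+3$ (the index $j=2N+4$ is even and drops out) and the remainder becomes $O(d^{-N-7/2})$, as required.
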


\begin{proof}
Apply Theorem~\ref{p2:thm:transfer} to the convergent expansion \eqref{p2:eq:fold-expansion}. Formula \eqref{p2:eq:g0-leading} is the transferred $5/2$-term.
\end{proof}

\section{Genus one}\label{p2:sec:g1}

We now turn to the genus-one case. We reconstruct the genus-one generating function and study its singularity.

\begin{proposition}\label{p2:prop:g1}
The derivative of the genus-one generating function $F_1(z)=\sum_{d\geq1} \frac{N_{1,d}}{(3d)!}e^{dz}$ can be expressed as a rational function in the derivatives of $F_0(z)$:
\begin{equation}\label{p2:eq:g1-derivative}
 F_1'(z)
 =
 \frac18+
 \frac{F_0'''(z)-27}
 {8(27+2F_0'(z)-3F_0''(z))}.
\end{equation}
\end{proposition}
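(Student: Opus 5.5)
The plan is to evaluate the genus-one graph sum \eqref{p2:eq:graph-F1} at a point of the slice $S$ and simplify it using the $R$-matrix recursion \eqref{p2:eq:R-recursion} and homogeneity. The set $G_{1,1}$ has two stable graphs. The first is a single genus-one vertex carrying the leg. The second is a genus-zero vertex carrying the leg and one self-loop, with automorphism factor $1/2$.

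For the first graph, only the degree-one part of $\Mbar_{1,1}$ survives. Using $\int_{\Mbar_{1,1}}\psi_1=1/24$, the vertex $(\mathsf T\omega)_{1,1}$ and the leg factor $R^{-1}(\psi_1)v$ give
\[
\tfrac1{24}\bigl(\omega_{1,2}(v,\mathsf T_2)-\omega_{1,1}(R_1v)\bigr),
\]
with $\mathsf T_2=R_1\one$. For the loop graph, $\Mbar_{0,3}$ is a point, so only the $\psi^0$-term of the edge \eqref{p2:eq:edge-expanded} contributes. That term is $-\sum_k R_1^*e_k\otimes e^k$, and it is contracted against $\omega_{0,3}(v,\cdot,\cdot)$.

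Next I pass to the normalized canonical frame, where $\omega_{g,n}$ is diagonal and $\Delta=\sum_i\eta_{ii}^{-1}\partial_{u_i}$. In that frame the sum should collapse to the classical Givental/Dubrovin--Zhang form
\[
\dd F_1=\tfrac12\sum_i (\widetilde R_1)_{ii}\,\dd u_i+\tfrac1{48}\sum_i\dd\log\eta_{ii}^{-1}.
\]
Two identifications are needed to get there. The diagonal of $\widetilde R_1$ comes from the $n=1$ case of \eqref{p2:eq:R-recursion}, through the diagonal part of $[\widetilde R_2,U]=(\id+\widetilde\mu)\widetilde R_1$. The rotation coefficients $\partial_{u_j}\log\sqrt{\eta_{ii}}$ are what turn the $\omega_{1,2}$ and $\omega_{0,3}$ pieces into the $\log\eta_{ii}$ term.

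I then rewrite everything in flat data. By the Remark relating $\mathcal Y$ and $\mathcal D$, $\sum_i\log\eta_{ii}^{-1}=\log\mathcal U=\log\mathcal D-2\log\mathcal Y-\log\det\eta$. Proposition~\ref{p2:prop:D-propagation} and \eqref{p2:eq:D-euler-power} give $\partial_{t_1}\log\mathcal D$ as a polynomial in the entries of $\E$ divided by $\mathcal Y$. Along $S$ the derivative $\partial_{t_1}$ is $\dd/\dd z$, so $\partial_{t_1}\log\mathcal Y=\mathcal Y'/\mathcal Y$. We have $\mathcal Y'=2F_0''-3F_0'''$, and $F_0'''$ is expressed as $\mathcal X/\mathcal Y$ by \eqref{p2:eq:scalar-wdvv}.

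The remaining term $\sum_i(\widetilde R_1)_{ii}\partial_{t_1}u_i$ is $\Tr(R_1\,\partial_{t_1}\E)$-like. To get it in flat terms I solve \eqref{p2:eq:An-system} for $R_1$ with $n=0$, using \eqref{p2:eq:euler-entries} for the $3\times3$ matrix $\E$ on $S$ (entries in $F_0,F_0',F_0'',F_0'''$), and then take the trace against $\partial_{t_1}\E$ computed from \eqref{p2:eq:flat-euler-derivative}. Homogeneity $E=3\partial_{t_1}$ on $S$ (with $t_0=0$ and $t_2=1$ fixed only after differentiating) is used to remove $t_2$-derivatives.

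After these substitutions, $F_1'(z)+\tfrac18$ should appear as a rational function in $F_0',F_0'',F_0'''$ with denominators powers of $\mathcal Y$ and $\mathcal D$. I expect the $\mathcal D$ factors to cancel, leaving \eqref{p2:eq:g1-derivative}. The additive constant is fixed by comparing the $e^{0\cdot z}$ terms. As $z\to-\infty$ we have $F_0^{(k)}\to0$, so the right side tends to $\tfrac18+\tfrac{-27}{8\cdot27}=0$, consistent with $F_1(z)$ having no constant-order derivative term and with the $-z/8$ piece of $F_1(t)|_S$.

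The main obstacle is the bookkeeping in the middle step, namely getting the diagonal of $R_1$ and the $\mathcal D$-cancellation right. As a fallback I would check the resulting identity as a power series in $e^z$. Comparing the first several coefficients against Kontsevich's recursion and the known values $N_{1,3}=1$ and $N_{1,4}=27$ would confirm the rational expression, once its form is known to be rational with denominator a power of $\mathcal Y$ of bounded degree.
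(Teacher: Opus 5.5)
Your route is genuinely different from the paper's, and the structural part of it is correct. However, as written it contains a step that would produce the wrong answer.

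\textbf{Comparison with the paper.} The paper never leaves the flat frame. It writes the graph sum as $\frac1{24}\bigl(\Tr(m_Hm_{R_1\one})-\Tr(m_{R_1H})\bigr)+\frac12\Tr(m_HR_1)$, substitutes the explicit $R_1|_S$ from the appendix, and simplifies with \eqref{p2:eq:scalar-wdvv}.

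You instead convert the genus-one-vertex part into $\frac1{48}\partial_{t_1}\log\mathcal U$, and this identification is right. In the canonical frame that part equals $\frac1{24}\sum_{i\neq j}(\widetilde R_1)_{ij}(\partial_{t_1}u_i-\partial_{t_1}u_j)$. The $n=0$ recursion gives $(\widetilde R_1)_{ij}=\frac12\partial_{u_j}\log\eta_{ii}$, and $\sum_j\partial_{u_j}\eta_{ii}=\partial_{t_0}\eta_{ii}=0$. Now combine three facts:
\begin{itemize}
\item $\mathcal U=\mathcal D/(\det\eta\,\mathcal Y^2)$;
\item $\mathcal D'/\mathcal D=2(2F_0''+27)/\mathcal Y$, which follows from \eqref{p2:eq:D-euler-power} using $((Y^{\mathsf T})^{-1})_{11}=9/\mathcal Y$, $((Y^{\mathsf T})^{-1})_{12}=1/\mathcal Y$ and $\Tr\E|_S=F_0''$;
\item $\mathcal Y'=2F_0''-3F_0'''$.
\end{itemize}
Together these show that this part is exactly $(F_0'''+9)/(8\mathcal Y)$. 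So all of the $F_0'''$-dependence in \eqref{p2:eq:g1-derivative} is explained structurally. What remains is the single identity $\Tr(m_HR_1)|_S=-9/\mathcal Y$.

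That identity still requires the explicit $R_1|_S$; it is where the $\mathcal D$ cancels. So the computational core is the same as the paper's. What your route buys is that the brute-force check shrinks to one trace, and the shape of the answer becomes transparent.

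\textbf{The errors.} First, the constant term of the edge factor is $+\sum_kR_1e_k\otimes e^k$, not $-\sum_kR_1^*e_k\otimes e^k$. To see this, expand $R^{-1}(\psi)=\id-R_1\psi+\cdots$ in \eqref{p2:eq:edge} and use $R_1\eta^{-1}=\eta^{-1}R_1^{\mathsf T}$. The loop graph therefore contributes $+\frac12\Tr(m_HR_1)$. This agrees with your canonical-frame target formula but not with the edge term you stated.

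Second, and more seriously, the remaining term must be computed as $\Tr(R_1m_H)$, where $m_H=\partial_{t_1}*$ is diagonal in the canonical frame with entries $\partial_{t_1}u_i$. It cannot be replaced by the trace against $\partial_{t_1}\E=m_H+[m_H,\mu]$ from \eqref{p2:eq:flat-euler-derivative}, because the commutator does not drop out. Use $(\widetilde R_1)_{ij}=\widetilde\mu_{ij}/(u_j-u_i)$ together with $(\widetilde R_1)_{ii}=-\sum_{j\neq i}\widetilde\mu_{ij}\widetilde\mu_{ji}/(u_i-u_j)$, the diagonal of the $n=1$ case of \eqref{p2:eq:R-recursion}. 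Then $\Tr(R_1[m_H,\mu])=-2\Tr(R_1m_H)$, hence $\Tr(R_1\,\partial_{t_1}\E)=-\Tr(R_1m_H)$. Carried out as written, with the $+\frac12$ of your canonical-frame formula, your plan yields $(F_0'''+45)/(8\mathcal Y)$. That already fails your own $z\to-\infty$ check, since it tends to $5/24$ rather than $-1/8$. The loop-graph contraction $\sum_k\omega_{0,3}(H,R_1e_k,e^k)$ is literally $\Tr(m_HR_1)$ in the flat basis, so no detour through the diagonal of $\widetilde R_1$ is needed.

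\textbf{Smaller points.}
\begin{itemize}
\item On $S$ the Euler field is $3\partial_{t_1}-\partial_{t_2}$, not $3\partial_{t_1}$.
\item The $\log\eta_{ii}$ term comes from the genus-one vertex ($\omega_{1,2}$ and $\omega_{1,1}$) through the off-diagonal entries of $\widetilde R_1$, not from $\omega_{0,3}$.
\item Matching finitely many $q$-coefficients is not a proof unless you also bound the numerator degree.
\end{itemize}
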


\begin{proof}
By the graph sum formula \eqref{p2:eq:graph-F1}, for $\P{2}$ we have
\begin{equation}
  \partial_{t_1}F_1(t)
 =
 \sum_{\Gamma\in G_{1,1}}
 \frac1{\abs{\Aut\Gamma}}
 \int_{\Mbar_{1,1}}\Cont_\Gamma(H).
\end{equation}
There are two stable graphs of type $(1,1)$. For the graph $\Gamma_1$ with one genus-one vertex and one leg, expanding the contribution of this graph gives
\begin{align*}
  \int_{\Mbar_{1,1}} \Cont_{\Gamma_1}(H) & = \int_{\Mbar_{1,1}}(\mathsf T\omega)_{1,1}(R^{-1}(\psi_1)H) \\
  & =\int_{\Mbar_{1,1}}(\mathsf T\omega)_{1,1}(H)-\int_{\Mbar_{1,1}}(\mathsf T\omega)_{1,1}(R_1 H)\psi_1  \\
  & =\int_{\Mbar_{1,2}}\omega_{1,2}(H,\mathsf T(\psi_2))-\int_{\Mbar_{1,1}}\omega_{1,1}(R_1 H)\psi_1     \\
  & =\omega_{1,2}(H,R_1 \one)\int_{\Mbar_{1,2}}\psi_2^2-\omega_{1,1}(R_1 H)\int_{\Mbar_{1,1}}\psi_1 \\
  & =\frac1{24} \bigl(\omega_{1,2}(H,R_1\one)-\omega_{1,1}(R_1H)\bigr).
\end{align*}
Here $\omega$ is the topological part of the shifted CohFT of $\P{2}$.

For the graph $\Gamma_2$ with one genus-zero vertex, one self-edge, and one leg, $\Mbar_{0,3}$ is a point. Thus only the constant terms of the leg and
edge factors contribute. The constant edge bivector in \eqref{p2:eq:edge-expanded} is $\sum_{\alpha=0}^2R_1H^\alpha\otimes H^{2-\alpha}$, so 
\[
 \int_{\Mbar_{1,1}} \Cont_{\Gamma_2}(H) = \sum_{\alpha=0}^2
 \omega_{0,3}(H,R_1H^\alpha,H^{2-\alpha}).
\]
The first graph $\Gamma_1$ has trivial automorphism group, while $\abs{\Aut\Gamma_2}=2$ because exchanging the two half-edges gives a nontrivial automorphism. It follows that
\begin{equation}
  \partial_{t_1}F_1(t) = 
  \frac1{24} \bigl(\omega_{1,2}(H,R_1\one)-\omega_{1,1}(R_1H)\bigr)
  + \frac12 \sum_{\alpha=0}^2
 \omega_{0,3}(H,R_1H^\alpha,H^{2-\alpha}).
\end{equation}
Consider the restriction to the locus $S=\{t_0=0,\ t_1=z,\ t_2=1,\ Q=1\}$. Recall that $F_1(t)|_S=-\frac z8+F_1(z)$. Solve $R_1|_S$ by the recursion \eqref{p2:eq:R-recursion} with 
\[
  \E|_S=
  \begin{pmatrix}
 0&6F_0''-2F_0'&27F_0''-27F_0'+6F_0\\
 3&F_0''&6F_0''-2F_0'\\
 -1&3&0
 \end{pmatrix}
  ,\qquad 
  \mu=
  \begin{pmatrix}
  -1&&\\
  &0&\\
  &&1
 \end{pmatrix}.
\]
Let $m_u$ denote Frobenius multiplication by $u$. Using
\[
 \eta(u,\Delta)=\operatorname{Tr}(m_u),
\]
formula \eqref{p2:eq:TFT} turns the preceding graph expression into
\[
 \left.\partial_{t_1}F_1(t)\right|_S
 =
 \frac1{24}
 \bigl(
   \operatorname{Tr}\left(m_Hm_{R_1\one}\right)
   -\operatorname{Tr}\left(m_{R_1H}\right)
 \bigr)
 +\frac12\operatorname{Tr}(m_HR_1),
\]
where all operators are restricted to $S$. Substituting the matrix $R_1|_S$ from Appendix~\ref{p2:app:explicit-R1-D}, and simplifying with \eqref{p2:eq:scalar-wdvv}, gives
\[
 \left.\partial_{t_1}F_1(t)\right|_S
 =
 \frac{F_0'''(z)-27}
 {8\bigl(27+2F_0'(z)-3F_0''(z)\bigr)}.
\]
Since $\left.\partial_{t_1}F_1(t)\right|_S=-\frac18+F_1'(z)$, rearranging proves \eqref{p2:eq:g1-derivative}.
\end{proof}

Substitution of \eqref{p2:eq:fold-expansion} into \eqref{p2:eq:g1-derivative} gives the convergent local Laurent--Puiseux expansion
\begin{equation}\label{p2:eq:g1-local}
 F_1'(x_0+\zeta)
 =
 -\frac1{48\zeta}+O(\zeta^{-1/2}).
\end{equation}
At $x_0+2\pi\iu$, by periodicity, the lower-branch
Laurent--Puiseux expansion has the same coefficients. Recall that $\mathcal Y=27+2F_0'(z)-3F_0''(z)\neq0$ in the half-strip $\{\Re z\leq x_0,\ 0\leq\Im z\leq2\pi\} \setminus\{x_0,x_0+2\pi\iu\}$ and that
\[
 \mathcal Y(x_0+\zeta)
 =-\frac{45}{4}a_5\zeta^{1/2}+O(\zeta),
 \qquad a_5\neq0.
\]
After decreasing the $\delta$ in the genus-zero continuation if necessary, compactness of the closed boundary segment away from the two endpoint neighborhoods, together with the endpoint expansions, shows that the denominator remains nonzero on the domain required by Theorem~\ref{p2:thm:transfer}. Thus $F_1'(z)$ satisfies that theorem's hypotheses, and $x_0$ is the genus-one abscissa of convergence.

\begin{corollary}\label{p2:cor:g1}
For the genus-one case of $\P{2}$, the expansion \eqref{p2:eq:g1-main} in Theorem~\ref{p2:thm:main} holds.
\end{corollary}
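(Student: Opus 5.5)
We apply Theorem~\ref{p2:thm:transfer} to $G(z)=F_1'(z)=\sum_{d\geq1}\frac{dN_{1,d}}{(3d)!}e^{dz}$, whose coefficients are $c_d=dN_{1,d}/(3d)!$, and then divide by $d$. The hypotheses were verified just before the statement. The analytic continuation to the required slit half-strip comes from the genus-zero continuation together with the nonvanishing of $\mathcal Y$ away from $x_0$ and $x_0+2\pi\iu$. Near the two endpoints, substituting the convergent expansion \eqref{p2:eq:fold-expansion} into the rational formula \eqref{p2:eq:g1-derivative} gives a convergent half-integer Laurent series with finite principal part. Since $\mathcal Y(x_0+\zeta)=-\frac{45}{4}a_5\zeta^{1/2}(1+O(\zeta^{1/2}))$ with $a_5\neq0$, dividing by $\mathcal Y$ introduces at most a pole of order $\zeta^{-1/2}$.

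To get the exact principal part, we compute the numerator $F_0'''-27$ at $x_0$. From \eqref{p2:eq:fold-expansion},
\[
F_0'''(x_0+\zeta)=\tfrac{15}{8}a_5\zeta^{-1/2}+O(1).
\]
Hence
\[
\frac{F_0'''-27}{8\mathcal Y}=\frac{\tfrac{15}{8}a_5\zeta^{-1/2}+\cdots}{-90a_5\zeta^{1/2}+\cdots}=-\frac1{48}\zeta^{-1}+O(\zeta^{-1/2}),
\]
which confirms \eqref{p2:eq:g1-local}. Writing the full expansion as $\sum_{j\geq-2}b_j\zeta^{j/2}$ with $b_{-2}=-1/48$, the lower-branch expansion at $x_0+2\pi\iu$ has the same coefficients by periodicity. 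This is exactly the form required in \eqref{p2:eq:local-transfer}--\eqref{p2:eq:local-transfer-lower}.

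The transfer formula \eqref{p2:eq:transfer-result} then gives, at arbitrary truncation order,
\[
c_d=e^{-dx_0}\sum_{j\geq-2,\ j\notin2\N}\frac{\iu^jb_j}{\Gamma(-j/2)}d^{-j/2-1}.
\]
The term $j=-2$ contributes $\frac{\iu^{-2}(-1/48)}{\Gamma(1)}d^{0}=\frac1{48}$. The term $j=-1$ contributes a multiple of $d^{-1/2}$. The remaining terms are $j$ odd, $j\geq1$, together with $j$ negative even, which is only $j=-2$; nonnegative even $j$ drop out. So the exponents are $d^{0}, d^{-1/2}, d^{-3/2}, d^{-5/2},\ldots$.

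Dividing by $d$ gives
\[
\frac{N_{1,d}}{(3d)!}=e^{-dx_0}d^{-1}\left(\frac1{48}+\sum_{k\geq1}a_{1,k}d^{-k+1/2}\right),
\]
where $a_{1,k}=\iu^{2k-3}b_{2k-3}/\Gamma(-(2k-3)/2)$ comes from $j=2k-3$. The error after truncation at $N-1$ terms is $O(d^{-N+1/2})$, obtained by choosing $A$ appropriately with $A\notin\N$. This is \eqref{p2:eq:g1-main}.

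The only delicate point is bookkeeping: making sure no term with $j\geq0$ even and no $\log$ term appears. The expansion has no logarithms because the relation \eqref{p2:eq:g1-derivative} is rational in $F_0',F_0'',F_0'''$. Integrating $F_1'$ would introduce a $\log\zeta$, which is exactly why we transfer $F_1'$ rather than $F_1$.
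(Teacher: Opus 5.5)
Your proposal is correct and follows the paper's proof: you apply Theorem~\ref{p2:thm:transfer} to $F_1'$, whose coefficients are $dN_{1,d}/(3d)!$, using the rational formula \eqref{p2:eq:g1-derivative} and the fold expansion \eqref{p2:eq:fold-expansion} to get a convergent half-integer Laurent--Puiseux expansion with principal part $-\frac1{48}\zeta^{-1}$, and then divide by $d$. Your explicit check of the $-\frac1{48}$ coefficient, the identification of the surviving exponents via $j=2k-3$, and the half-integer choice of $A$ for the error term fill in details that the paper leaves implicit.
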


\begin{proof}
By \eqref{p2:eq:g1-derivative} and the convergent expansion \eqref{p2:eq:fold-expansion}, the function $F_1'$ has a convergent half-integer Laurent--Puiseux expansion with finite principal part at each of the two endpoint singularities. Applying Theorem~\ref{p2:thm:transfer} to successive truncations gives constants $a_{1,k}$, $k\geq1$, such that, for every $N\geq1$,
\[
 d\frac{N_{1,d}}{(3d)!}
 =
 e^{-dx_0}
 \left(
   \frac1{48}
   +\sum_{k=1}^{N-1}a_{1,k}d^{-k+1/2}
   +O(d^{-N+1/2})
 \right).
\]
Dividing by $d$ proves \eqref{p2:eq:g1-main}.
\end{proof}

\section{Higher genus}\label{p2:sec:hg}

Next, we consider the genera $g\ge 2$ for $\P{2}$. We reconstruct the genus-$g$ generating function and study its singularity.

For $g\geq2$, restrict the graph reconstruction of the potential \eqref{p2:eq:potentials} to $S=\{t_0=0,\ t_1=z,\ t_2=1,\ Q=1\}$. Proposition~\ref{p2:prop:reconstruction} gives
\begin{equation}\label{p2:eq:localized-Fg}
 F_g(z)
 \in
 \Q\left[
 F_0,F_0',F_0'',\mathcal Y^{-1},\mathcal D^{-1}
 \right],
\end{equation}
where we suppress the symbol ${}|_S$. Recall that $\mathcal{Y}=27+2F_0'(z)-3F_0''(z)$ and $\mathcal{D}$ is the discriminant of the characteristic polynomial of the matrix
\begin{equation}\label{p2:eq:euler-on-S}
 \E|_S=
 \begin{pmatrix}
 0&6F_0''-2F_0'&27F_0''-27F_0'+6F_0\\
 3&F_0''&6F_0''-2F_0'\\
 -1&3&0
 \end{pmatrix}.
\end{equation}
The discriminant $\mathcal{D}$ can be expressed as a rather lengthy polynomial in $F_0, F_0', F_0''$, whose complete expression is recorded in Appendix~\ref{p2:app:explicit-R1-D}. At the origin $q=e^z=0$,
\begin{equation}\label{p2:eq:small-q-denominators}
 \mathcal Y=27+O(q),
 \qquad
 \mathcal D=-3^9q^2+O(q^3).
\end{equation}
The graph sum therefore has a Laurent expansion in $q$ at the origin. Reconstruction identifies it coefficientwise with the formal series $F_g\in q\CC[[q]]$. Hence all negative Laurent coefficients vanish, and $F_g(q)$ is analytic at the origin.

\begin{lemma}\label{p2:lem:discriminant}
Along $S=\{t_0=0,\ t_1=z,\ t_2=1,\ Q=1\}$, 
\begin{equation}\label{p2:eq:discriminant-ode}
 \mathcal D'
 =
 \frac{2(2F_0''+27)}{\mathcal Y}\mathcal D.
\end{equation}
The discriminant $\mathcal{D}$ extends to the half-strip $\{\Re z\leq x_0,\ 0\leq\Im z\leq2\pi\}$, and it has no zeros in this half-strip. In particular, $\mathcal{D}(x_0)\neq 0$.
\end{lemma}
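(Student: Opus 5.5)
The proof has two parts: derive the linear ODE \eqref{p2:eq:discriminant-ode} from Proposition~\ref{p2:prop:D-propagation}, then integrate it as in \eqref{p2:eq:D-path} to rule out zeros.

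\emph{Step 1: the ODE.} Along $S$ only $t_1=z$ varies, so $\mathcal D'=\partial_{t_1}\mathcal D|_S$. On the Euler-tame locus I write $\partial_{t_1}$ in terms of $\one,E,E^{*2}$ using \eqref{p2:eq:flat-from-euler}, restricted to $S$. Then \eqref{p2:eq:D-euler-power} gives
\[
 \one(\log\mathcal D)=0,\qquad E(\log\mathcal D)=2\cdot3=6,\qquad E^{*2}(\log\mathcal D)=2\sum_{i<j}(u_i+u_j)=4\Tr\E .
\]
From \eqref{p2:eq:euler-on-S}, $\Tr\E|_S=F_0''$. So I need the coefficients of $E$ and $E^{*2}$ in $\partial_{t_1}$, which come from the second row of $(Y^{\mathsf T})^{-1}$. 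The columns of $Y$ are $e_0$, $\E e_0=(0,3,-1)^{\mathsf T}$ and $\E^2e_0$. A direct $3\times3$ computation should give
\[
 \partial_{t_1}=c_0\one+\frac{\alpha}{\mathcal Y}E+\frac{\beta}{\mathcal Y}E^{*2},
\]
with $6\alpha+4F_0''\beta=2(2F_0''+27)$. I expect $\beta=1$ and $\alpha=9$, which is consistent with the restriction $\mathcal Y|_S=27+2F_0'-3F_0''$. This proves \eqref{p2:eq:discriminant-ode} where $\mathcal Y\mathcal D\neq0$. Both sides are analytic in $z$ and $\mathcal D$ is polynomial in $F_0,F_0',F_0''$, so the identity extends to wherever these functions are analytic and $\mathcal Y\neq0$.

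\emph{Step 2: continuation.} $\mathcal D$ is a polynomial in $F_0,F_0',F_0''$. These series converge absolutely and uniformly on the closed half-strip, including its right edge, by the remark before \eqref{p2:eq:boundary-strict}. So $\mathcal D$ extends continuously to the closed half-strip and is analytic in its interior and near the non-endpoint boundary points.

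\emph{Step 3: no zeros.} At $q=e^z\to0$, \eqref{p2:eq:small-q-denominators} gives $\mathcal D=-3^9q^2(1+O(q))\neq0$ for $\Re z$ very negative. For a point $z_1$ of the half-strip other than the two endpoints, I choose a path from such a base point to $z_1$ inside the half-strip (horizontal, then vertical) that avoids $x_0$ and $x_0+2\pi\iu$. Along it $\mathcal Y\neq0$, by the strict estimate \eqref{p2:eq:boundary-strict} and the strict bound $|3F_0''-2F_0'|<27$ for $\Re z<x_0$. By compactness $1/\mathcal Y$ is bounded on the path, so \eqref{p2:eq:D-path} with the exponential factor shows $\mathcal D(z_1)\neq0$.

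\emph{Step 4: the endpoints (main obstacle).} Here $\mathcal Y\to0$, and I use the endpoint clause in \eqref{p2:eq:D-path}. Take the real path $z\uparrow x_0$. By Proposition~\ref{p2:prop:fold} and the displayed expansion $\mathcal Y(x_0+\zeta)=-\tfrac{45}{4}a_5\zeta^{1/2}+O(\zeta)$, we have $|\mathcal Y|\asymp|z-x_0|^{1/2}$, while $2(2F_0''+27)$ stays bounded. Hence the integrand is $O(|z-x_0|^{-1/2})$, which is in $L^1$. The exponential in \eqref{p2:eq:D-path} is therefore finite and nonzero, and continuity of $\mathcal D$ at $x_0$ gives $\mathcal D(x_0)\neq0$. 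The point $x_0+2\pi\iu$ follows by $2\pi\iu$-periodicity of $F_0,F_0',F_0''$.

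The step most likely to need care is the explicit computation of $\alpha$ and $\beta$ in Step 1. If my guess for them is wrong, I would instead verify \eqref{p2:eq:discriminant-ode} directly by differentiating the appendix expression for $\mathcal D$ and reducing $F_0'''$ with \eqref{p2:eq:scalar-wdvv}.
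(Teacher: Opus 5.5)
Your proposal is correct and follows the paper's proof essentially step for step. You get the ODE from the Euler-power algorithm of Proposition~\ref{p2:prop:D-propagation}, and your guess $\alpha=9$, $\beta=1$ is right: on $S$ one gets $\mathcal Y\,\partial_{t_1}=(6F_0-21F_0'+9F_0'')\one+9E+E^{*2}$. You then propagate $\mathcal D\neq0$ from the region $q\to0$ along paths avoiding the zeros of $\mathcal Y$, and use the $O(\zeta^{-1/2})$ integrability at the endpoints, exactly as the paper does.

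One detail in Step~3 needs fixing. For $z_1$ on the edge $\Re z=x_0$, a horizontal-then-vertical path from a real base point runs into $x_0$. Go vertically first instead, or start at height $\Im z_1$, which is fine because $\mathcal D=-3^9q^2(1+O(q))$ is nonzero for all small $\abs{q}$.
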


\begin{proof}
Equation \eqref{p2:eq:discriminant-ode} follows from the explicit computation of \eqref{p2:eq:D-propagation} using the algorithm in the proof of Proposition~\ref{p2:prop:D-propagation}. It can also be checked by direct differentiation of the explicit formula for $\mathcal D$ in Appendix~\ref{p2:app:explicit-R1-D}.

By \eqref{p2:eq:small-q-denominators}, choose a negative real $z_0$ such that $\mathcal D(z_0)\neq0$. The punctured half-strip $\{\Re z\leq x_0,\ 0\leq\Im z\leq2\pi\}\setminus\{x_0,x_0+2\pi\iu\}$ is path connected, and $\mathcal Y$ has no zero there. Hence, for every finite point $z$ of that half-strip away from the two endpoints $\{x_0, x_0+2\pi\iu\}$, a path from $z_0$ to $z$ can be chosen on which $\mathcal Y\neq0$. Along this path,
\[
 \mathcal D(z)
 =
 \mathcal D(z_0)
 \exp\left(
   \int_{z_0}^{z}
   \frac{2(2F_0''(s)+27)}{\mathcal Y(s)}\,\dd s
 \right),
\]
so $\mathcal D(z)\neq0$.

The convergent Laurent--Puiseux expansions of $F_0,F_0',F_0''$ show that the polynomial $\mathcal D$ has finite limits at the two endpoints. Moreover,
\[
 \mathcal Y(x_0+\zeta)
 =-\frac{45}{4}a_5\zeta^{1/2}+O(\zeta),
\]
and hence the logarithmic derivative above is $O(\zeta^{-1/2})$. Its integral converges as $z\to x_0$, so the exponential has a finite nonzero limit. Thus $\mathcal D(x_0)\neq0$. Periodicity gives the same conclusion at $x_0+2\pi\iu$.
\end{proof}

\begin{remark}
The nonvanishing $\mathcal D(x_0)\neq0$ means that the eigenvalues of the limiting matrix $\E|_S$ remain pairwise distinct as $z\to x_0$ in the slit neighborhood. Thus, the singular behavior is not caused by a collision of these limiting eigenvalues; it occurs in the change from canonical to flat coordinates. This is consistent with the local quantum-cohomology analysis in \cite{The_singularity_of_Kontsevichs_solution_for_QHCP2}.
\end{remark}

\begin{lemma}\label{p2:lem:R-bounded}
For every fixed $n\geq0$, every flat-basis entry of the $R$-matrix $R_n|_S$ has a convergent half-integer expansion and is $O(1)$ at $x_0$.
\end{lemma}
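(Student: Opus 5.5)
The natural route is the explicit linear system \eqref{p2:eq:An-system} from the proof of Proposition~\ref{p2:prop:reconstruction}, rather than the rational expression in $\mathcal Y^{-1}$ and $\mathcal D^{-1}$. That rational expression could carry negative powers of $\mathcal Y$, and $\mathcal Y\to0$ at $x_0$, so it would not by itself give boundedness. The key observation is that the coefficient matrix $\mathcal A_n$ in \eqref{p2:eq:An} is \emph{polynomial} in the flat-basis entries of $\E|_S$. By \eqref{p2:eq:euler-on-S}, these entries are polynomials in $F_0,F_0',F_0''$. By Proposition~\ref{p2:prop:fold}, $F_0,F_0',F_0''$ have convergent half-integer expansions at $x_0$ with no negative powers: $F_0''$ has a $\zeta^{1/2}$ term, and $F_0,F_0'$ are even better. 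Hence every entry of $\E|_S$, and of $\mathcal A_n$, is a convergent series in $\zeta^{1/2}$ that is $O(1)$. The right-hand side $\vvec((n\,\id+\mu)R_n)$ is likewise linear in $R_n$ with constant coefficients.

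I would then argue by induction on $n$, starting from $R_0=\id$. Suppose the entries of $R_n|_S$ are convergent series in $\zeta^{1/2}$ with no negative powers. Solve \eqref{p2:eq:An-system} by Cramer's rule:
\[
\vvec(R_{n+1}) = \operatorname{adj}(\mathcal A_n)\vvec((n\,\id+\mu)R_n)\big/\det\mathcal A_n .
\]
The adjugate is polynomial in the entries of $\E$, so it is a bounded convergent half-integer series. By \eqref{p2:eq:R-system-determinant}, $\det\mathcal A_n=(-1)^{r(r+1)/2}(n+1)^{r+1}\mathcal D^2$. Lemma~\ref{p2:lem:discriminant} gives $\mathcal D(x_0)\neq0$, and $\mathcal D$, being a polynomial in $F_0,F_0',F_0''$, is itself a convergent series in $\zeta^{1/2}$ with nonzero constant term. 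Therefore $\mathcal D^{-2}$ is a convergent series in $\zeta^{1/2}$ with no negative powers. The product is again such a series, which is $O(1)$ and closes the induction. The same argument applies at $x_0+2\pi\iu$ with the lower branch, using periodicity.

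The main point to check is that \eqref{p2:eq:An-system} really determines $R_{n+1}$ on the slit neighborhood, i.e.\ that it is valid wherever $\mathcal D\neq0$ and is not merely an identity on $\mathcal Y\neq0$. The derivation of \eqref{p2:eq:An-system} uses only the recursion \eqref{p2:eq:R-recursion} together with the trace normalization $\Tr(\E^k((n+1)\id+\mu)R_{n+1})=0$. Both are algebraic identities in $\E$, $\mu$ and $R$, so they hold wherever $\mathcal D\neq0$. Near $x_0$ this is the whole slit neighborhood, by Lemma~\ref{p2:lem:discriminant}. Consequently $\mathcal Y$ never enters, and the boundedness of $R_n|_S$ follows, with $\mathcal D^{-1}$ as the only denominator.
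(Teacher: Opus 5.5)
Your proposal is correct and follows essentially the same route as the paper: induct from $R_0=\id$ through the linear system \eqref{p2:eq:An-system}, using that $\mathcal A_n$ is polynomial in the entries of $\E|_S$, which are bounded and have convergent half-integer expansions, and that $\det\mathcal A_n=-(n+1)^3\mathcal D^2$ is nonzero at $x_0$ by Lemma~\ref{p2:lem:discriminant}. Your worry about possible powers of $\mathcal Y^{-1}$ is unnecessary, because the proof of Proposition~\ref{p2:prop:reconstruction} already places every $R_n$ in the localization by $\mathcal D$ alone, but this extra check does no harm.
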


\begin{proof}
The entries of $\E|_S$ have convergent half-integer expansions and finite limits at $x_0$. In the linear system obtained from \eqref{p2:eq:R-recursion}, or equivalently \eqref{p2:eq:An-system}, by \eqref{p2:eq:R-system-determinant}, the determinant at level $n$ is $-(n+1)^3\mathcal D^2$, which is nonzero at $x_0$. Induction from $R_0=\id$ proves the lemma.
\end{proof}

\begin{lemma}\label{p2:lem:TFT-bound}
Let $\omega$ be the topological part of the shifted CohFT of $\P{2}$. Let $2g-2+n>0$, and let every insertion $v_i$ be the restriction to $S$ of a vector $R_{j_i}^*(H^{k_i})$, with $j_i\geq0$ and $k_i\in\{0,1,2\}$ fixed. Then
\begin{equation}\label{p2:eq:TFT-bound}
 \omega_{g,n}(v_1,\ldots,v_n)|_S(x_0+\zeta)
 =
 O\bigl(\zeta^{-(2g-2+n)/2}\bigr).
\end{equation}
\end{lemma}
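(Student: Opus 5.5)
We diagonalize the topological field theory in the canonical basis. By \eqref{p2:eq:TFT}, at a semisimple point,
\[
 \omega_{g,n}(v_1,\ldots,v_n)
 =\sum_{i=0}^{2}\eta_{ii}^{1-g-n}\prod_{l=1}^n v_l^{(i)},
\]
where $v_l=\sum_i v_l^{(i)}\partial_{u_i}$ and $\eta_{ii}=\eta(\partial_{u_i},\partial_{u_i})$. This holds because $\Delta=\sum_i\eta_{ii}^{-1}\partial_{u_i}$ and $\eta(\partial_{u_i},\partial_{u_i})=\eta_{ii}$. It is more convenient to use the normalized basis $\tilde e_i=\eta_{ii}^{-1/2}\partial_{u_i}$. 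Writing $v_l=\sum_i\tilde v_l^{(i)}\tilde e_i$, the formula becomes
\[
 \omega_{g,n}(v_1,\ldots,v_n)=\sum_i\eta_{ii}^{-(2g-2+n)/2}\prod_l\tilde v_l^{(i)},
 \qquad
 \tilde v_l^{(i)}=\eta(v_l,\tilde e_i).
\]
The proof then reduces to two estimates:
\begin{enumerate}
\item the coefficients $\tilde v_l^{(i)}$ are $O(1)$ at $x_0$;
\item each $\eta_{ii}^{-1/2}$ is $O(\zeta^{-1/2})$, that is, $\eta_{ii}^{-1}=O(\zeta^{-1})$.
\end{enumerate}
A sign or branch ambiguity in $\eta_{ii}^{1/2}$ is harmless: each term contains $\tilde e_i$ an even number of times overall, so locally we may choose any branch.

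For the first estimate, note that $\tilde v_l^{(i)}=\eta(R_{j}^*H^{k},\tilde e_i)=\eta(H^k,R_j\tilde e_i)$. By Lemma~\ref{p2:lem:R-bounded}, the flat entries of $R_j$ are bounded, so it suffices to show that the flat coordinates of the normalized idempotents $\tilde e_i$ are bounded near $x_0$. The matrix $C$ of flat coordinates of the $\tilde e_i$ satisfies $C^{\mathsf T}\eta C=I$. The vectors $\tilde e_i$ are unit eigenvectors of the symmetric operator $\E$ with respect to $\eta$. By Lemma~\ref{p2:lem:discriminant}, the eigenvalues $u_i$ of $\E|_S$ remain pairwise distinct at $x_0$ and $\E|_S$ has a finite limit there. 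Hence the eigenprojectors
\[
 P_i=\prod_{j\neq i}\frac{\E-u_j}{u_i-u_j}
\]
are bounded, with half-integer expansions. Since $\eta$ is constant, $P_i=\tilde e_i\,\eta(\tilde e_i,\cdot)$, i.e. $P_i=\tilde e_i\tilde e_i^{\mathsf T}\eta$ in matrix form, so $\tilde e_i\tilde e_i^{\mathsf T}=P_i\eta^{-1}$ is bounded. The diagonal entries of this bounded matrix are the squares of the coordinates of $\tilde e_i$, so those coordinates are bounded. This gives the first estimate with no loss, and it uses only $\mathcal D(x_0)\neq0$.

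For the second estimate, the factor $\eta_{ii}^{-1}$ is an eigenvalue of $m_\Delta$, namely the $i$-th diagonal entry of $\Delta$. The idempotent is $\partial_{u_i}=\eta_{ii}^{1/2}\tilde e_i$, so $\eta_{ii}^{-1}=\eta(\tilde e_i,\tilde e_i)^{-1}\cdot\ldots$; more usefully, the flat coordinates of $\partial_{u_i}$ are $\eta_{ii}^{1/2}$ times bounded quantities. We therefore estimate the idempotents themselves. By \eqref{p2:eq:flat-from-euler}, $\partial_{u_i}$ is expressed through $\one,E,E^{*2}$ with the matrix $Y^{-1}$ and the Vandermonde in the $u_i$. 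Explicitly, $\partial_{u_i}$ is the unique element with $E^{*k}=\sum_iu_i^k\partial_{u_i}$, so
\[
 (\partial_{u_0},\partial_{u_1},\partial_{u_2})=(\one,E,E^{*2})V^{-\mathsf T},
 \qquad
 (\one,E,E^{*2})=(\partial_{t_\alpha})Y.
\]
The flat coordinates of $\partial_{u_i}$ are therefore the entries of $YV^{-\mathsf T}$. These are bounded, because $Y$ is polynomial in the entries of $\E$ and $V^{-1}$ is controlled by $\mathcal D(x_0)\neq0$. Consequently $\eta_{ii}=\eta(\partial_{u_i},\partial_{u_i})$ is bounded. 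This is the wrong direction, so we also need a lower bound. For that we use \eqref{p2:eq:Y-D-U-relation}: $\prod_i\eta_{ii}^{-1}=\mathcal U=\mathcal D/(\det\eta\,\mathcal Y^2)$. Since $\mathcal Y\sim-\frac{45}{4}a_5\zeta^{1/2}$, this gives $\mathcal U\asymp\zeta^{-1}$, while each $\eta_{ii}$ is bounded. It follows that exactly the degenerating $\eta_{ii}$ carry a total vanishing order of $\zeta^{1}$.

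This last point is the main obstacle. If one $\eta_{ii}$ vanishes like $\zeta$ and the other two stay nonzero, then $\eta_{ii}^{-1}=O(\zeta^{-1})$ and the claim follows. However, the product bound alone does not exclude, say, two factors of order $\zeta^{1/2}$. In either case every $\eta_{ii}^{-1}$ is $O(\zeta^{-1})$, since the $\eta_{ii}$ are bounded and each has a half-integer expansion of order at most $1$. Thus the worst-case term is $O(\zeta^{-(2g-2+n)/2})$, as claimed. The care needed is in checking that all expansions are half-integer Puiseux series, so that orders are well defined. This follows from Lemma~\ref{p2:lem:R-bounded}, Proposition~\ref{p2:prop:fold}, and the analytic dependence of the simple eigenvalues $u_i$ on $\E$.
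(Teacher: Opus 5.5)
Your proof is correct, but it follows a different route from the paper's.

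\textbf{The paper's route.} The paper never diagonalizes. It evaluates $\omega_{g,n}$ on a maximally degenerate stable curve, so the TFT becomes a contraction of $2g-2+n$ copies of the flat structure constants $\omega_{0,3}|_S$ against the constant pairing $\eta$. Each of these constants is a combination of the third derivatives in \eqref{p2:eq:derivative-table}, hence $O(\zeta^{-1/2})$ because $F_0'''=\mathcal X/\mathcal Y$. The insertions are bounded by Lemma~\ref{p2:lem:R-bounded}, so the bound follows in a few lines.

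\textbf{Your route.} You work in the normalized canonical frame, where the TFT is $\sum_i\eta_{ii}^{-(2g-2+n)/2}\prod_l\tilde v_l^{(i)}$, and you prove two separate estimates:
\begin{enumerate}
\item the normalized idempotents are bounded, via the spectral projectors $P_i$ and $\tilde e_i\tilde e_i^{\mathsf T}=P_i\eta^{-1}$, which uses $\mathcal D(x_0)\neq0$;
\item $\eta_{ii}^{-1}=O(\zeta^{-1})$, via \eqref{p2:eq:Y-D-U-relation} and $\mathcal Y\asymp\zeta^{1/2}$.
\end{enumerate}

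\textbf{Comparison.} Your route needs more input, but it is more informative. It shows that the blow-up lives entirely in the vanishing norms $\eta_{ii}$ of the idempotents, while the eigenvalues stay distinct; this matches the remark after Lemma~\ref{p2:lem:discriminant}. It is also the natural frame for the rank-one estimate \eqref{p2:eq:rank-one-product} of Lemma~\ref{p2:lem:unit-column}. Read in your frame, that estimate says exactly one $\eta_{ii}$ vanishes, to first order in $\zeta$, with $\tilde e_i\to\pm(3\cdot\one+H)$ and $\eta_{ii}^{-1/2}\sim\pm\sigma\zeta^{-1/2}$.

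\textbf{Two small points.}
\begin{itemize}
\item In your first display, if $v_l^{(i)}$ are the coefficients of $v_l$ in the basis $\partial_{u_i}$, the exponent should be $1-g$. Your $1-g-n$ corresponds to $v_l^{(i)}=\eta(v_l,\partial_{u_i})$. The normalized formula you actually use is correct.
\item The last step needs no Puiseux-order bookkeeping. Since every $\eta_{jj}$ is bounded, you get directly
\[
\eta_{ii}^{-1}=\mathcal U\prod_{j\neq i}\eta_{jj}=O(\mathcal U)=O(\zeta^{-1}),
\]
uniformly in the slit neighborhood.
\end{itemize}
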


\begin{proof}
Compute the TFT on a maximally degenerate stable curve. It has $2g-2+n$ trivalent genus-zero vertices. Every trivalent tensor $\omega_{0,3}|_S$ is a linear combination of third derivatives in \eqref{p2:eq:derivative-table}, each of which is $O(\zeta^{-1/2})$. Lemma~\ref{p2:lem:R-bounded} bounds the entries of the $R$-matrix, and contractions use the constant Poincar\'e pairing $\eta$. Multiplying the vertex bounds gives \eqref{p2:eq:TFT-bound}.
\end{proof}

\begin{lemma}\label{p2:lem:unit-column}
On the slice $S=\{t_0=0,\ t_1=z,\ t_2=1,\ Q=1\}$, as $\zeta\to0$ in the upper-side slit neighborhood of $x_0$, with the half-powers taken on the branch \eqref{p2:eq:half-power-branch}, 
\begin{equation}\label{p2:eq:unit-cube}
 (R_1\one)^{*3}|_S(x_0+\zeta)
 =
 -\frac{512}{50625a_5^4}
 \zeta^{-1}(3\cdot\one+H)
 +O(\zeta^{-1/2}).
\end{equation}
\end{lemma}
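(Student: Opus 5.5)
Write $v:=R_1\one|_S$ and expand it in the flat basis, $v=v^0\one+v^1H+v^2H^2$. The coefficients $v^\alpha$ are rational in $F_0,F_0',F_0''$ with denominators powers of $\mathcal D$ (Proposition~\ref{p2:prop:reconstruction}). By Lemma~\ref{p2:lem:R-bounded}, each $v^\alpha$ has a convergent half-integer expansion and is $O(1)$ at $x_0$.

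All the singular behaviour of $v^{*3}$ must therefore come from the Frobenius multiplication on $S$. The multiplication matrices $m_H|_S$ and $m_{H^2}|_S$ are built from third derivatives through \eqref{p2:eq:frobenius-product} and \eqref{p2:eq:derivative-table}. By \eqref{p2:eq:scalar-wdvv},
\[
F_0'''=\mathcal X/\mathcal Y,
\]
and from Proposition~\ref{p2:prop:fold} we have $\mathcal Y=-\tfrac{45}{4}a_5\zeta^{1/2}+O(\zeta)$. Combining these gives
\[
F_0'''(x_0+\zeta)=-\frac{4\mathcal X(x_0)}{45a_5}\zeta^{-1/2}+O(1).
\]
The plan is to isolate the singular part of the product. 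From the table, the $F_0'''$-dependence of all $F_{ijk}$ with $i,j,k\ge1$ is $F_0'''\cdot 3^{\#\{2\}}$; that is, $F_{ijk}\approx 3^{(\text{number of }2\text{'s})}F_0'''$. So the singular part of the cubic form is
\[
F_0'''\,\ell^{\otimes 3},\qquad \ell(aH+bH^2):=a+3b,
\]
which is a rank-one tensor. Consequently, for $x,y\in\HH$,
\[
x*y=F_0'''\,\ell(x)\ell(y)\,w+O(1),
\]
where $w$ is the $\eta$-dual vector of $\ell$, determined by $\eta(w,H)=1$ and $\eta(w,H^2)=3$, so that $w=3\cdot\one+H$. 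This matches the direction $3\cdot\one+H$ in \eqref{p2:eq:unit-cube}.

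Next I compute the cube. Since $\ell(w)=0$ (because $\ell(\one)=0$, $\ell(H)=1$, $\ell(3\cdot\one+H)=1$?), I have to recheck: actually $\ell(3\cdot\one+H)=1\neq0$. So I would expand $v*v=(F_0''')\ell(v)^2w+B(v,v)$ with $B=O(1)$ and then compute $(v*v)*v$. The leading candidate term is $(F_0''')^2\ell(v)^3\ell(w)w=O(\zeta^{-1})$, and the cross terms are of the same order. The claimed answer has order $\zeta^{-1}$, consistent with this.

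The key step, and the main obstacle, is determining the limit $\ell(v)(x_0)$ together with the $O(1)$ corrections, including the $\zeta^{1/2}$ subleading part of $F_0'''$ multiplied against $\zeta^{-1/2}$ terms. I would try to organise this by the semisimple picture: since $\mathcal D(x_0)\neq0$ by Lemma~\ref{p2:lem:discriminant}, near $x_0$ exactly one canonical idempotent $e_i$ has norm $\eta(e_i,e_i)\to0$. Then
\[
e_i=\frac{w}{F_0'''\ell(w)^2}+\dots,
\]
and the cube of $v$ is dominated by $\bigl(c_i^3/\eta_{ii}^2\bigr)e_i$-type terms, where $c_i=\eta(v,e_i)$. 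I would then compute $c_i(x_0)$ from the explicit $R_1|_S$ in Appendix~\ref{p2:app:explicit-R1-D}, evaluated at $\mathcal Y=0$ using the identity $27+2F_0'=3F_0''$. Substituting $\mathcal X(x_0)$ from \eqref{p2:eq:a5}, i.e.\ $a_5^2=-\tfrac{32}{675}\mathcal X(x_0)$, should reproduce the constant $-512/(50625a_5^4)$. Finally, the errors are of order $O(\zeta^{-1/2})$ because all remaining terms lose at least one factor of $\zeta^{-1/2}$. The lower-branch version of the statement follows by periodicity.
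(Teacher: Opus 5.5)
Your set-up agrees with the paper's. On $S$ the only unbounded part of the cubic form is the rank-one tensor $F_0'''\,\ell^{\otimes3}$, with $\ell(p_0\one+p_1H+p_2H^2)=p_1+3p_2$. Its $\eta$-dual direction is $w=3\cdot\one+H$, with $\ell(w)=\eta(w,w)=1$. Moreover $F_0'''=\sigma\zeta^{-1/2}+O(1)$ with $\sigma=-4\mathcal X(x_0)/(45a_5)=15a_5/8$.

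The first problem is your claim that the cross terms in $(v*v)*v$ are also of order $\zeta^{-1}$. They are not, and this mistake is what sends you into the idempotent detour. Since the non-$F_0'''$ parts of the $F_{ijk}$ are bounded near $x_0$, you have the uniform bilinear bound $\|x*y-\sigma\zeta^{-1/2}\ell(x)\ell(y)w\|\le C\|x\|\,\|y\|$. Hence $v*v=\sigma\zeta^{-1/2}\ell(v)^2w+O(1)$, and applying the bound once more with $x=v*v$ gives
\[
 v^{*3}=\sigma\zeta^{-1/2}\,\ell(v*v)\,\ell(v)\,w+O(\|v*v\|)
 =\sigma^2\zeta^{-1}\ell(v)^3w+O(\zeta^{-1/2}).
\]
With $v=v^{(0)}+O(\zeta^{1/2})$ from Lemma~\ref{p2:lem:R-bounded}, the coefficient of $\zeta^{-1}w$ is exactly $\sigma^2\ell(v^{(0)})^3$. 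Neither the bounded part of the product nor the $O(1)$ part of $F_0'''$ contributes. This is the paper's argument, and it makes the canonical-frame analysis superfluous. That analysis is also off as written:
\begin{itemize}
\item the cube is $v^{*3}=\sum_i(c_i/\eta_{ii})^3e_i$, not built from $c_i^3/\eta_{ii}^2$;
\item for the degenerating idempotent $c_i\to0$, so $c_i(x_0)$ carries no information; what you need is the rescaled limit $\lim F_0'''c_i$, which is again $\ell(v^{(0)})$;
\item ``exactly one idempotent degenerates'' is asserted rather than proved.
\end{itemize}

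The second and decisive gap is that you never compute $\ell(v^{(0)})$. This is the entire numerical content of the lemma: if it were zero there would be no $\zeta^{-1}$ term at all. Saying the substitution ``should reproduce the constant'' is not an argument. The paper obtains the value from the first column of $R_1|_S$ in Appendix~\ref{p2:app:explicit-R1-D}, which gives $\ell(R_1\one|_S)=(-18\mathcal P_{21}-9\mathcal P_{31})/\mathcal D$. At $(A_0,B_0,C_0)$ with $C_0=9+2B_0/3$ (that is, $\mathcal Y(x_0)=0$), this equals $27/(4B_0^2+54A_0+135B_0+5103)=3/\mathcal X(x_0)$. This is a genuine cancellation on the plane $\mathcal Y=0$ (it forces $\mathcal X$ to divide $\mathcal D$ there), not something you can read off from the shape of the answer. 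Then $a_5^2=-\tfrac{32}{675}\mathcal X(x_0)$ gives $\ell(v^{(0)})=-32/(225a_5^2)$, and
\[
 \sigma^2\ell(v^{(0)})^3=\Bigl(\frac{15a_5}{8}\Bigr)^2\Bigl(-\frac{32}{225a_5^2}\Bigr)^3=-\frac{512}{50625a_5^4}.
\]
With the corrected error estimate and this evaluation, your outline becomes the paper's proof.
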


\begin{proof}
For $\mathbf p=p_0\one+p_1H+p_2H^2$, define $\ell(\mathbf p)=p_1+3p_2$. Put
\[
 \mathbf r:=R_1\one|_S,\qquad
 \mathbf v:=3\cdot\one+H,\qquad
 \sigma:=\frac{15a_5}{8},
\]
and set
\[
 A:=F_0(z),\qquad 
 B:=F_0'(z),\qquad 
 C:=F_0''(z).
\]
By Lemma~\ref{p2:lem:R-bounded}, the limit
\[
  \mathbf r^{(0)}
  :=
  \lim_{\substack{\zeta\to0\\0\leq\arg\zeta\leq\pi}}
  \mathbf r(x_0+\zeta)
\]
exists, and
\[
  \mathbf r(x_0+\zeta)
  =
  \mathbf r^{(0)}+O(\zeta^{1/2}).
\]
Set
\[
  A_0:=F_0(x_0),\qquad
  B_0:=F_0'(x_0),\qquad
  C_0:=F_0''(x_0).
\]
Since $\mathcal Y(x_0)=0$, we have $C_0=9+2B_0/3$. Evaluating the explicit formula for $R_1|_S$ in Appendix~\ref{p2:app:explicit-R1-D} at $(A,B,C)=(A_0,B_0,C_0)$ gives
\begin{align*}
  \ell(\mathbf r^{(0)})
  &=
  \left.
  \frac{-18\mathcal P_{21}-9\mathcal P_{31}}{\mathcal D}
  \right|_{(A,B,C)=(A_0,B_0,C_0)}\\
  &=
  \frac{27}{4B_0^2+54A_0+135B_0+5103}\\
  &=
  \frac3{\mathcal X(x_0)}.
\end{align*}
Equation~\eqref{p2:eq:a5} yields
\[
  a_5^2=-\frac{32}{675}\mathcal X(x_0),
  \qquad
  \ell(\mathbf r^{(0)})=-\frac{32}{225a_5^2}.
\]
The leading third derivatives in \eqref{p2:eq:derivative-table} give the following uniform estimate. Fix a norm $\lVert\cdot\rVert$ on the flat-coordinate space and a closed subsector of the upper-side slit neighborhood of $x_0$. There is a constant $C>0$ such that, for all sufficiently small $\zeta$ in this subsector and all vectors $\mathbf p,\mathbf q$,
\begin{equation}\label{p2:eq:rank-one-product}
  \left\lVert
  \mathbf p*\mathbf q
  -
  \sigma\zeta^{-1/2}
  \ell(\mathbf p)\ell(\mathbf q)\mathbf v
  \right\rVert
  \leq
  C\lVert\mathbf p\rVert\,\lVert\mathbf q\rVert.
\end{equation}
Applying this estimate and using $\ell(\mathbf v)=1$ gives
\begin{align*}
  \mathbf r^{*3}
  &=
  \sigma^2\zeta^{-1}\ell(\mathbf r^{(0)})^3\mathbf v
  +O(\zeta^{-1/2})\\
  &=
  \left(\frac{15a_5}{8}\right)^2
  \left(-\frac{32}{225a_5^2}\right)^3
  \zeta^{-1}\mathbf v
  +O(\zeta^{-1/2})\\
  &=
  -\frac{512}{50625a_5^4}
  \zeta^{-1}(3\cdot\one+H)
  +O(\zeta^{-1/2}).
\end{align*}
This is \eqref{p2:eq:unit-cube}.
\end{proof}

\begin{theorem}\label{p2:thm:hg-local}
For every $g\geq2$, the genus-$g$ generating function 
\begin{equation}
  F_g(z) = \sum_{d\geq1}
  \frac{N_{g,d}}{(3d-1+g)!}e^{dz}
\end{equation}
has a convergent half-integer Laurent--Puiseux expansion at $x_0$:
\begin{equation}\label{p2:eq:hg-local}
 F_g(x_0+\zeta)
 =
 \frac{\lang{\tau_2^{3g-3}}_g}
 {(3g-3)!(-15a_5)^{g-1}}
 \zeta^{-(5g-5)/2}
 +
 O\bigl(\zeta^{-(5g-6)/2}\bigr).
\end{equation}
At $x_0+2\pi\iu$, it has the same Laurent--Puiseux coefficients. Here 
\begin{equation*}
  \lang{\tau_2^{3g-3}}_g
  :=
  \int_{\Mbar_{g,3g-3}}
  \psi_1^2\cdots\psi_{3g-3}^2
\end{equation*}
is the intersection number of $\psi$-classes on the moduli space of stable curves.
\end{theorem}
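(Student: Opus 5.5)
We use the graph-sum formula \eqref{p2:eq:graph-Fg}, restricted to $S$, together with the order estimates of Lemmas~\ref{p2:lem:R-bounded}--\ref{p2:lem:unit-column}. By \eqref{p2:eq:localized-Fg} and Lemma~\ref{p2:lem:discriminant}, $F_g|_S$ is a polynomial in $F_0,F_0',F_0''$ and $\mathcal Y^{-1}$ with coefficients regular at $x_0$. Since $\mathcal Y$ has a convergent expansion starting with $-\tfrac{45}{4}a_5\zeta^{1/2}$, $\mathcal Y^{-1}$ has a convergent half-integer Laurent expansion with finite principal part. Hence $F_g(x_0+\zeta)$ has a convergent half-integer Laurent--Puiseux expansion with finite principal part. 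The statement at $x_0+2\pi\iu$ follows from $2\pi\iu$-periodicity, exactly as in Proposition~\ref{p2:prop:fold}. It remains to bound the order of each graph contribution and to extract the leading coefficient.

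\emph{Order count.} Fix a stable graph $\Gamma\in G_{g,0}$. At each vertex, expand $(\mathsf T\omega)_{g_v,n_v}$ via \eqref{p2:eq:translated-cohft} as a sum over $m_v$ extra translation legs carrying $\mathsf T_{i}\psi^{i}$, $i\ge2$. Each translation leg inserts $(-1)^iR_{i-1}^*\one$, and each edge inserts $R^*_a$-terms with bounded coefficients. By Lemma~\ref{p2:lem:TFT-bound}, the vertex contributes $O(\zeta^{-(2g_v-2+n_v+m_v)/2})$. Dimension counting on $\Mbar_{g_v,n_v+m_v}$ bounds the number of translation legs: each carries $\psi$-degree at least $2$, so $2m_v\le 3g_v-3+n_v+m_v$, i.e. $m_v\le 3g_v-3+n_v$. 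Hence the vertex exponent is at most $(5g_v-5+2n_v)/2$. Summing over vertices and using $\sum_v n_v=2|E|$ and $\sum_v(g_v-1)=g-1-|E|$, the total exponent is at most
\[
\tfrac12\Bigl(5(g-1-|E|)+4|E|\Bigr)=\tfrac{5g-5-|E|}{2}.
\]
Thus only the single-vertex graph (genus $g$, no edges) attains $\zeta^{-(5g-5)/2}$; every other graph is $O(\zeta^{-(5g-6)/2})$.

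\emph{Leading coefficient.} For the single-vertex graph, equality requires $m=3g-3$ translation legs, all with $i=2$. The relevant term is therefore
\[
\frac1{(3g-3)!}\int_{\Mbar_{g,3g-3}}\psi_1^2\cdots\psi_{3g-3}^2\,\omega_{g,3g-3}(R_1\one,\dots,R_1\one).
\]
The TFT value is $\omega_{g,3g-3}(\mathbf r,\dots,\mathbf r)=\eta(\mathbf r^{*(3g-3)},\Delta^{*g})$ by \eqref{p2:eq:TFT}. Using the rank-one structure \eqref{p2:eq:rank-one-product} and Lemma~\ref{p2:lem:unit-column}, we write $\mathbf r^{*3(g-1)}=(\mathbf r^{*3})^{*(g-1)}$ to leading order. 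Substituting \eqref{p2:eq:unit-cube} and iterating with $\ell(\mathbf v)=1$ gives
\[
\mathbf r^{*(3g-3)}\sim \sigma^{g-2}\zeta^{-(g-2)/2}\Bigl(-\tfrac{512}{50625a_5^4}\Bigr)^{g-1}\zeta^{-(g-1)}\mathbf v.
\]
The handle element $\Delta^{*g}$ also diverges, and its leading part is again along $\mathbf v$-type directions. I would compute $\Delta$ from the flat basis, $\Delta=\sum_\alpha H^\alpha*H^{2-\alpha}$, using \eqref{p2:eq:rank-one-product}, which gives $\Delta\approx 3\sigma\zeta^{-1/2}\ell(\one)\ell(H^2)\mathbf v+O(1)$. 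The pairing $\eta(\mathbf v^{*},\Delta^{*g})$ then combines the factors $\sigma\zeta^{-1/2}$ and the $\ell$-values. Collecting powers should give exactly $\zeta^{-(5g-5)/2}$ with coefficient $\bigl((3g-3)!(-15a_5)^{g-1}\bigr)^{-1}\lang{\tau_2^{3g-3}}_g$.

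\emph{Main obstacle.} The main difficulty is this last bookkeeping step. Lemma~\ref{p2:lem:TFT-bound} only gives an upper bound, so the subleading $O(1)$ parts of the products could in principle conspire; one must check that the rank-one leading parts produce a nonzero combination and carry the normalization constants through. I would first verify the case $g=2$ explicitly with the Appendix formulas, namely
\[
\tfrac{1}{3!}\lang{\tau_2^3}_2\,\eta(\mathbf r^{*3},\Delta^{*2}),
\]
to fix the constant. I would then argue inductively using $\ell(\mathbf p*\mathbf q)\approx\sigma\zeta^{-1/2}\ell(\mathbf p)\ell(\mathbf q)$, which makes every leading factor multiplicative, and check that $\eta(\mathbf v,\cdot)$ applied to the leading $\mathbf v$-component is nonzero. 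I would also double-check that the edge factors and the leg factors never reach the top order, which is already guaranteed by the $-|E|/2$ deficit in the count above.
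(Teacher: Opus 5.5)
Your outline follows the paper's proof. The following steps are all correct and agree with the paper:
\begin{itemize}
\item the graph sum \eqref{p2:eq:graph-Fg} on $S$;
\item the vertex bound via Lemma~\ref{p2:lem:TFT-bound} together with $m_v\le 3g_v-3+n_v$;
\item the count $(5g-5-\abs{E(\Gamma)})/2$;
\item the isolation of the unique top term $\frac{1}{(3g-3)!}\lang{\tau_2^{3g-3}}_g\,\eta\bigl(\mathbf r^{*(3g-3)},\Delta^{*g}\bigr)$;
\item your leading term $c^{g-1}\sigma^{g-2}\zeta^{-(3g-4)/2}\mathbf v$ for $\mathbf r^{*(3g-3)}$, with $c=-512/(50625a_5^4)$.
\end{itemize}
The gap is the last step, which is where the explicit coefficient in \eqref{p2:eq:hg-local} comes from. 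You never carry out that computation, and the one concrete formula you give for it is wrong. Since $\ell(\one)=0$, your $\Delta\approx3\sigma\zeta^{-1/2}\ell(\one)\ell(H^2)\mathbf v$ is identically zero, and if used as written it would kill the leading term. In $\Delta=\sum_{\alpha}H^\alpha*H^{2-\alpha}=2H^2+H*H$, the terms $\one*H^2=H^2$ are $O(1)$. The singular part comes only from $H*H$, and \eqref{p2:eq:rank-one-product} with $\ell(H)=1$ gives $\Delta=\sigma\zeta^{-1/2}\mathbf v+O(1)$.

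Your ``main obstacle'' is not an obstacle, and no separate $g=2$ check against the Appendix is needed. The estimate \eqref{p2:eq:rank-one-product} is uniform and $\ell(\mathbf v)=1$. So if $\mathbf p=P\zeta^{-a}\mathbf v+O(\zeta^{-a+1/2})$ and $\mathbf q=Q\zeta^{-b}\mathbf v+O(\zeta^{-b+1/2})$, then $\mathbf p*\mathbf q=\sigma PQ\,\zeta^{-a-b-1/2}\mathbf v+O(\zeta^{-a-b})$. Subleading parts therefore cannot conspire, and the upper bound of Lemma~\ref{p2:lem:TFT-bound} is only needed for the non-leading terms. Induction gives $\Delta^{*g}=\sigma^{2g-1}\zeta^{-(2g-1)/2}\mathbf v+O(\zeta^{-(g-1)})$. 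Since $\eta(\one,\one)=\eta(\one,H)=0$, we have $\eta(\mathbf v,\mathbf v)=\eta(H,H)=1$, and so
\[
 \eta\bigl(\mathbf r^{*(3g-3)},\Delta^{*g}\bigr)
 =(c\sigma^3)^{g-1}\zeta^{-(5g-5)/2}+O\bigl(\zeta^{-(5g-6)/2}\bigr),
 \qquad
 c\sigma^3=-\frac{512}{50625a_5^4}\cdot\frac{3375a_5^3}{512}=-\frac1{15a_5}.
\]
This is the missing computation. It shows the leading coefficient is the nonzero constant $\lang{\tau_2^{3g-3}}_g/\bigl((3g-3)!(-15a_5)^{g-1}\bigr)$. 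With it, your argument coincides with the paper's.
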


\begin{proof}
We use the graph-sum formula \eqref{p2:eq:graph-Fg} and restrict to $S$. Fix a stable graph $\Gamma$ of type $(g,0)$. A typical product of vertex factors has the form
\[
  \prod_v \frac{1}{m_v!}
  \int_{\Mbar_{g_v,n_v+m_v}}
  \left(\prod_{i=1}^{n_v}\psi_i^{\alpha_{v,i}}\right)
  \left(\prod_{j=1}^{m_v}\psi_{n_v+j}^{\beta_{v,j}}\right)
  \omega_{g_v,n_v+m_v}
  (\ldots,\mathsf T_{\beta_{v,1}},\ldots,\mathsf T_{\beta_{v,m_v}}).
\]
The omitted ordinary and edge-half-edge inputs are of the form $R_i^*(H^j)$. Here, at a vertex $v$, $m_v$ is the number of translation markings, $n_v$ the number of half-edges, and $\beta_{v,j}\geq2$. The dimension constraint is 
\[
 \sum_i\alpha_{v,i}
 +
 \sum_j(\beta_{v,j}-1)
 =
 3g_v-3+n_v.
\]
Lemmas~\ref{p2:lem:R-bounded} and~\ref{p2:lem:TFT-bound} show that the contribution of this graph is
\[
 O\left(
 \zeta^{-\frac12\sum_v(2g_v-2+n_v+m_v)}
 \right).
\]
Moreover,
\begin{align*}
 \sum_v(2g_v-2+n_v+m_v)
 &\leq
 \sum_v\left(
 5g_v-5+2n_v-\sum_i\alpha_{v,i}
 \right)\\
 &\leq
 5\sum_v g_v-5\abs{V(\Gamma)}+2\sum_v n_v\\
 &=
 5g-5-\abs{E(\Gamma)},
\end{align*}
where we used $g=1-\abs{V(\Gamma)}+\abs{E(\Gamma)}+\sum_vg_v$ and $\sum_vn_v=2\abs{E(\Gamma)}$. Here $\abs{V(\Gamma)}$ and $\abs{E(\Gamma)}$ denote the number of vertices and edges of $\Gamma$, respectively. Consequently, for each fixed genus $g$ and each fixed graph $\Gamma$, uniformly as $\zeta\to0$ in any fixed closed subsector of the slit neighborhood, the contribution of $\Gamma$ is
\begin{equation}\label{p2:eq:graph-bound}
 O\bigl(
   \zeta^{-(5g-5-\abs{E(\Gamma)})/2}
 \bigr),
\end{equation}
where the implied constant may depend on $g$, $\Gamma$, and the chosen subsector. Thus, every stable graph with an edge is less singular by at least $1/2$ than the one-vertex leading term.

For the one-vertex graph $\Gamma_1$ of type $(g,0)$, its contribution is
\begin{align*}
  \int_{\Mbar_{g,0}}\Cont_{\Gamma_1}= & \int_{\Mbar_{g,0}}(\mathsf T\omega)_{g,0} \\
  = & \int_{\Mbar_{g,0}}\omega_{g,0}+\int_{\Mbar_{g,1}}\omega_{g,1}(\mathsf T(\psi_1))+\frac{1}{2!}\int_{\Mbar_{g,2}}\omega_{g,2}(\mathsf T(\psi_1),\mathsf T(\psi_2))+\cdots \\
  & +\frac{1}{(3g-3)!}\int_{\Mbar_{g,3g-3}}\omega_{g,3g-3}(\mathsf T(\psi_1),\ldots,\mathsf T(\psi_{3g-3})).
\end{align*}
Only the last term in this summation can achieve the power $-(5g-5)/2$ of $\zeta$, and in this case the $3g-3$ translations are all equal to $\mathsf T_2=R_1\one$. The contribution of this term is
\[
  \frac{1}{(3g-3)!}\omega_{g,3g-3}(R_1\one,\ldots,R_1\one)\int_{\Mbar_{g,3g-3}}\psi_1^2\cdots\psi_{3g-3}^2=\frac{\lang{\tau_2^{3g-3}}_g}{(3g-3)!} \eta\bigl((R_1\one)^{*(3g-3)},\Delta^{*g}\bigr).
\]
It remains to compute the term $\eta\bigl((R_1\one)^{*(3g-3)},\Delta^{*g}\bigr)$ restricted to $S$. Recall the computations in Lemma~\ref{p2:lem:unit-column}. Set $c:=-512/(50625a_5^4)$. By \eqref{p2:eq:unit-cube} and \eqref{p2:eq:derivative-table},
\[
 (R_1\one)^{*3}
 =c\zeta^{-1}\mathbf v+O(\zeta^{-1/2}),
 \qquad
 \Delta
 =\sum_{\alpha=0}^2H^\alpha*H^{2-\alpha}
 =\sigma\zeta^{-1/2}\mathbf v+O(1).
\]
An induction on the number of factors, using the uniform bilinear estimate \eqref{p2:eq:rank-one-product}, gives
\begin{align*}
 (R_1\one)^{*(3g-3)}
 &=c^{g-1}\sigma^{g-2}
   \zeta^{-(3g-4)/2}\mathbf v
   +O\bigl(\zeta^{-(3g-5)/2}\bigr),\\
 \Delta^{*g}
 &=\sigma^{2g-1}
   \zeta^{-(2g-1)/2}\mathbf v
   +O\bigl(\zeta^{-(2g-2)/2}\bigr).
\end{align*}
Since $\eta(\mathbf v,\mathbf v)=1$ and $c\sigma^3=-1/(15a_5)$,
\begin{align*}
 \eta\bigl((R_1\one)^{*(3g-3)},\Delta^{*g}\bigr)
 &=(c\sigma^3)^{g-1}\zeta^{-(5g-5)/2}
   +O\bigl(\zeta^{-(5g-6)/2}\bigr)\\
 &=\frac{1}{(-15a_5)^{g-1}}\zeta^{-(5g-5)/2}
   +O\bigl(\zeta^{-(5g-6)/2}\bigr),
\end{align*}
which proves \eqref{p2:eq:hg-local}.

At $x_0+2\pi\iu$, by periodicity, the lower-branch Laurent--Puiseux expansion has the same coefficients. 
\end{proof}

\begin{remark}\label{p2:rem:intersection-recursion}
Equations~\cite[(6.9)--(6.10)]{Combinatorics_of_the_modular_group_II_the_Kontsevich_integrals} give the following normalization and recursion for the intersection numbers:
\begin{align}
 \lang{\tau_2^{3g-3}}_g
 &=
 \frac{2^g(3g-3)!}{(5g-5)(5g-3)}\phi_g,
 &g\geq2,
 \label{p2:eq:tau2-phi}\\
 \phi_{g+1}
 &=
 \frac{25g^2-1}{24}\phi_g
 +
 \frac12\sum_{m=1}^{g}\phi_{g+1-m}\phi_m,
 &g\geq0,
 \label{p2:eq:phi-recursion}\\
 \phi_0&=-1.\nonumber
\end{align}
In particular, $\lang{\tau_2^{3g-3}}_g >0$ for every $g\geq2$. The first three values are
\[
 \lang{\tau_2^3}_2=\frac7{240},
 \qquad
 \lang{\tau_2^6}_3=\frac{1225}{144},
 \qquad
 \lang{\tau_2^9}_4=\frac{1816871}{48}.
\]
Moreover, \cite[Corollary~5.7]{Recursions_and_asymptotics_of_intersection_numbers} gives its large-genus asymptotic expansion 
\begin{align*}
  \lang{\tau_2^{3g-3}}_g = & \left(\frac{25}{24}\right)^g \frac{2^{g-1} \sqrt{3/5}(3g-3)!((g-1)!)^2}{\pi^2(5g-5)(5g-3)}\left(1-\frac{49}{3750 g^3}-\frac{49}{1250 g^4}+O(g^{-5})\right).
\end{align*}
\end{remark}

Recall that $\mathcal Y=27+2F_0'-3F_0''\neq0$ in the half-strip $\{\Re z\leq x_0,\ 0\leq\Im z\leq2\pi\}\setminus\{x_0,x_0+2\pi\iu\}$ and that
\[
 \mathcal Y(x_0+\zeta)
 =-\frac{45}{4}a_5\zeta^{1/2}+O(\zeta),
 \qquad a_5\neq0.
\]
The discriminant $\mathcal D$ has no zero in this half-strip. After decreasing the $\delta$ in the genus-zero continuation if necessary, compactness, the endpoint expansions, and the rational expression \eqref{p2:eq:localized-Fg} give, for every fixed $g\geq2$, the open continuation domain required by Theorem~\ref{p2:thm:transfer}. Hence $F_g(z)$ satisfies that theorem's hypotheses, and $x_0$ is its abscissa of convergence.

\begin{proof}[Proof of Theorem~\ref{p2:thm:main}]
Corollaries~\ref{p2:cor:g0} and~\ref{p2:cor:g1} prove the first two parts. For $g\geq2$, apply Theorem~\ref{p2:thm:transfer} to \eqref{p2:eq:hg-local} and to successive truncations of the convergent Laurent--Puiseux expansion. The leading power $\zeta^{-\frac{5g-5}{2}}$ transfers to $d^{(5g-7)/2}$, and its coefficient is
\[
 a_{g,0}=\frac{\iu^{-(5g-5)}
 \lang{\tau_2^{3g-3}}_g}
 {(3g-3)!(-15a_5)^{g-1}
 \Gamma((5g-5)/2)}
 =
 \frac{\lang{\tau_2^{3g-3}}_g}
 {(3g-3)!(-15\iu a_5)^{g-1}
 \Gamma((5g-5)/2)}.
\]
Since the intersection number $\lang{\tau_2^{3g-3}}_g>0$, and $-\iu a_5>0$, we have $a_{g,0}>0$. This proves \eqref{p2:eq:hg-main} and \eqref{p2:eq:hg-leading}. If $(5g-7-k)/2$ is a negative integer, the corresponding exponent of $\zeta$ is a nonnegative integer, and the transfer coefficient is therefore zero by Theorem~\ref{p2:thm:transfer}, proving \eqref{p2:eq:hg-vanishing}. Taking $d$-th roots in the three expansions gives \eqref{p2:eq:root-limit}.
\end{proof}

\begin{corollary}\label{p2:cor:iterated-limits}
Denote $\alpha:=-\iu a_5=\abs{a_5}>0$. For $g\geq2$, define
\begin{equation}\label{p2:eq:Ag}
  A_g
  :=
  \left(\frac{25}{24}\right)^g
  \frac{2^{g-1}\sqrt{3/5}\,((g-1)!)^2
  }{
    \pi^2(5g-5)(5g-3)
    (15\alpha)^{g-1}
    \Gamma\!\left(\frac{5g-5}{2}\right)
  }.
\end{equation}
For $g\geq2$ and $d\geq1$, put
\begin{equation}\label{p2:eq:Rgd}
  \mathcal R_{g,d}
  :=
  \frac{N_{g,d}}{
    (3d-1+g)!\,
    e^{-dx_0}
    d^{(5g-7)/2}
    A_g
  }.
\end{equation}
Then
\begin{equation}\label{p2:eq:noncommuting-iterated-limits}
  \lim_{g\to\infty}\lim_{d\to\infty}\mathcal R_{g,d}=1,
  \qquad
  \lim_{d\to\infty}\lim_{g\to\infty}\mathcal R_{g,d}=0.
\end{equation}
More precisely, if $h=g-1$, then
\begin{equation}\label{p2:eq:ag0-large-genus-refined}
\begin{aligned}
  a_{g,0}
  ={}&
  \frac{\sqrt3}{24\pi^{3/2}}
  h^{-1/2}
  \left(
    \frac{\sqrt{2e/5}}{45\alpha\sqrt h}
  \right)^h
  \left(
    1-\frac4{15h}+\frac{26}{225h^2}+O(h^{-3})
  \right),
\end{aligned}
\end{equation}
as $g\to\infty$. Consequently, if
\begin{equation}\label{p2:eq:Sgd}
\begin{aligned}
  \mathcal S_{g,d}
  :={}&
  \frac{\sqrt3}{24\pi^{3/2}}
  \frac{(3d-1+g)!\,e^{-dx_0}}{d\sqrt{g-1}}
  \left(
    \frac{\sqrt{2e/5}}{45\alpha}
    \frac{d^{5/2}}{\sqrt{g-1}}
  \right)^{g-1},
\end{aligned}
\end{equation}
then
\begin{equation}\label{p2:eq:stirling-iterated-limit}
  \lim_{g\to\infty}
  \lim_{d\to\infty}
  \frac{N_{g,d}}{\mathcal S_{g,d}}
  =1.
\end{equation}
\end{corollary}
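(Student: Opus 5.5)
The plan is to combine the fixed-genus asymptotics of Theorem~\ref{p2:thm:main} with the large-genus asymptotics of $\lang{\tau_2^{3g-3}}_g$ recorded in Remark~\ref{p2:rem:intersection-recursion}.

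\emph{First limit.} For fixed $g\geq2$, \eqref{p2:eq:hg-main} gives
\[
\lim_{d\to\infty}\mathcal R_{g,d}=\frac{a_{g,0}}{A_g}.
\]
Substitute the asymptotic expansion of $\lang{\tau_2^{3g-3}}_g$ from \cite[Corollary~5.7]{Recursions_and_asymptotics_of_intersection_numbers} into \eqref{p2:eq:hg-leading}. The factors $(3g-3)!$ cancel, and $(-15\iu a_5)^{g-1}=(15\alpha)^{g-1}$. What remains is exactly $A_g\bigl(1+O(g^{-3})\bigr)$, so $a_{g,0}/A_g\to1$ as $g\to\infty$. This proves the first half of \eqref{p2:eq:noncommuting-iterated-limits}.

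\emph{Refined expansion.} To obtain \eqref{p2:eq:ag0-large-genus-refined}, write $h=g-1$ and apply Stirling's formula with correction terms to the three factorial-type quantities $(h!)^2$, $\Gamma(5h/2)$, and $(5h)(5h+2)$. Collect powers of $h$ and $e$:
\begin{itemize}
\item $(h!)^2/\Gamma(5h/2)$ contributes $h^{2h+1}e^{-2h}\cdot 2\pi$ divided by $\sqrt{2\pi}(5h/2)^{5h/2-1/2}e^{-5h/2}$;
\item the base $(25/24)^{h+1}2^{h}$ combines with $(5/2)^{-5h/2}$ and $e^{h/2}$ to give the geometric factor $\bigl(\sqrt{2e/5}/(45\alpha\sqrt h)\bigr)^h$.
\end{itemize}
The correction factor $1-\frac4{15h}+\frac{26}{225h^2}$ comes from multiplying the Stirling series corrections ($1/(12n)$-type terms) together with $(5h)^{-1}(5h+2)^{-1}=\frac1{25h^2}\bigl(1-\frac2{5h}+\cdots\bigr)$. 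This is a routine but error-prone expansion, and it is the main computational obstacle; I would verify it symbolically.

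\emph{Second iterated limit.} For fixed $d$, the ratio $N_{g,d}/(3d-1+g)!$ as $g\to\infty$ must be compared with $d^{(5g-7)/2}A_g$. Here $A_g$ decays only like $(C/\sqrt g)^g$, since $(g!)^2/\Gamma(5g/2)\sim g^{-g/2}$, whereas $N_{g,d}$ vanishes for $g>(d-1)(d-2)/2$. That vanishing holds because $N_{g,d}$ counts irreducible curves of geometric genus $g$, and the geometric genus is at most the arithmetic genus $(d-1)(d-2)/2$; this is the enumerative interpretation cited from \cite{Intersection_theory_on_M_14_and_elliptic_Gromov-Witten_invariants}. Hence $\mathcal R_{g,d}=0$ for $g$ large, so $\lim_{g}\mathcal R_{g,d}=0$ for each $d$, and the second limit is $0$.

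\emph{Stirling-normalized statement.} For \eqref{p2:eq:stirling-iterated-limit}, note that by \eqref{p2:eq:Sgd},
\[
\mathcal S_{g,d}=(3d-1+g)!\,e^{-dx_0}d^{(5g-7)/2}\times(\text{leading term of }a_{g,0}\text{ in }\eqref{p2:eq:ag0-large-genus-refined}).
\]
Therefore $\lim_d N_{g,d}/\mathcal S_{g,d}=a_{g,0}/(\text{leading term})$, which tends to $1$ by the refined expansion.
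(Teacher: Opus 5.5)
Your proposal is correct and follows the paper's proof step for step. The identity $a_{g,0}=A_g(1+O(g^{-3}))$, from the Liu--Mulase--Xu expansion, gives the first iterated limit. The vanishing $N_{g,d}=0$ for $g>(d-1)(d-2)/2$ gives the second. Stirling applied to $(h!)^2/\Gamma(5h/2)$, together with the expansion of $1/((5h)(5h+2))$, gives the refined expansion and hence the $\mathcal S_{g,d}$ limit.

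When you carry out the deferred expansion, the $h^{-2}$ coefficient $\frac{26}{225}$ needs the second Stirling term $\frac{1}{288n^2}$, not only $\frac{1}{12n}$. The paper records the intermediate factors $1+\frac{2}{15h}+\frac{2}{225h^2}$ and $1-\frac{2}{5h}+\frac{4}{25h^2}$.
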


\begin{proof}
Equations \eqref{p2:eq:hg-leading} and \cite[Corollary~5.7]{Recursions_and_asymptotics_of_intersection_numbers} give
\[
  a_{g,0}
  =
  A_g
  \left(
    1-\frac{49}{3750g^3}
    -\frac{49}{1250g^4}
    +O(g^{-5})
  \right).
\]
For each fixed $g\geq2$, Theorem~\ref{p2:thm:main} gives
\[
  \lim_{d\to\infty}\mathcal R_{g,d}
  =
  \frac{a_{g,0}}{A_g},
\]
and the first identity in \eqref{p2:eq:noncommuting-iterated-limits} follows by letting $g\to\infty$.

If $C\subset\P{2}$ is an irreducible plane curve of degree $d$, then its geometric genus satisfies $g(C)\le p_a(C)=(d-1)(d-2)/2$. By the enumerative interpretation \cite[Corollary~5.2]{Intersection_theory_on_M_14_and_elliptic_Gromov-Witten_invariants}, with the geometric-genus terminology explained after \eqref{p2:eq:invariant}, we have
\[
 N_{g,d}=0
 \qquad
 \text{for }
 g>\frac{(d-1)(d-2)}{2}.
\]
Thus $\lim_{g\to\infty}\mathcal R_{g,d}=0$ for every fixed $d$, which proves the second identity in \eqref{p2:eq:noncommuting-iterated-limits}.

It remains to derive \eqref{p2:eq:ag0-large-genus-refined}. Writing $h=g-1$, Stirling's expansion gives
\[
  \frac{(h!)^2}{\Gamma(5h/2)}
  =
  \sqrt{5\pi}\,h^{3/2}
  \left(
    \frac{\sqrt e}{(5/2)^{5/2}\sqrt h}
  \right)^h
  \left(
    1+\frac2{15h}+\frac2{225h^2}+O(h^{-3})
  \right),
\]
whereas
\[
  \frac1{(5h)(5h+2)}
  =
  \frac1{25h^2}
  \left(
    1-\frac2{5h}+\frac4{25h^2}+O(h^{-3})
  \right).
\]
Substitution into \eqref{p2:eq:Ag} gives
\[
  A_g
  =
  \frac{\sqrt3}{24\pi^{3/2}}
  h^{-1/2}
  \left(
    \frac{\sqrt{2e/5}}{45\alpha\sqrt h}
  \right)^h
  \left(
    1-\frac4{15h}+\frac{26}{225h^2}+O(h^{-3})
  \right).
\]
The relative correction $a_{g,0}/A_g$ starts at order $g^{-3}$, and hence at order $h^{-3}$, so the same first two correction terms hold for $a_{g,0}$. This proves \eqref{p2:eq:ag0-large-genus-refined}.

Finally, combining the fixed-genus leading term in Theorem~\ref{p2:thm:main} with \eqref{p2:eq:ag0-large-genus-refined} proves \eqref{p2:eq:stirling-iterated-limit}.
\end{proof}

\begin{remark}
For fixed $g\geq2$, write the convergent local expansion at the singularity $x_0$ as
\[
  F_g(x_0+\zeta)
  =
  \sum_{k\geq0}
  B_{g,k}\,
  \zeta^{-(5g-5-k)/2}.
\]
Whenever $\frac{5g-5-k}{2}\notin\mathbb Z_{\leq0}$, Theorem~\ref{p2:thm:transfer} gives the exact identity
\begin{equation}\label{p2:eq:local-to-degree-ratio}
  \frac{a_{g,k}}{a_{g,0}}
  =
  \iu^k
  \frac{B_{g,k}}{B_{g,0}}
  \frac{
    \Gamma\!\left(\frac{5g-5}{2}\right)
  }{
    \Gamma\!\left(\frac{5g-5-k}{2}\right)
  }.
\end{equation}
When the denominator Gamma function has a pole, the corresponding transfer coefficient vanishes, in agreement with \eqref{p2:eq:hg-vanishing}. Identity \eqref{p2:eq:local-to-degree-ratio} is pointwise in $g$ and supplies no estimate uniform in the genus.
\end{remark}

\begin{remark}
All two-parameter limits above are iterated limits. In the first identity of \eqref{p2:eq:noncommuting-iterated-limits} and in \eqref{p2:eq:stirling-iterated-limit}, the limit $d\to\infty$ is taken first with $g$ fixed, and then $g\to\infty$. In the second identity of \eqref{p2:eq:noncommuting-iterated-limits}, the order is reversed: the limit $g\to\infty$ is taken first with $d$ fixed, and then $d\to\infty$. The estimates proved in this paper are not uniform in $g$, so no simultaneous asymptotic with $g=g(d)$ is asserted.
\end{remark}

\appendix

\section{Explicit formulas on the slice \texorpdfstring{$S$}{S}}\label{p2:app:explicit-R1-D}

We present some explicit formulas on the slice $S=\{t_0=0,\ t_1=z,\ t_2=1,\ Q=1\}$ used in the preceding proofs. Put
\[
 A:=F_0(z),\qquad B:=F_0'(z),\qquad C:=F_0''(z).
\]
The discriminant of the characteristic polynomial of $\E|_S$ is
\begin{align*}
\mathcal D={}&
 -24AC^4+16B^2C^3+12BC^4+36C^5\\
&{}-288A^2C^2+864AB^2C-1152ABC^2+648AC^3\\
&{}-432B^4+432B^3C-504B^2C^2+2700BC^3-1620C^4\\
&{}-864A^3+6480A^2B-7776A^2C-4536AB^2
   +27216ABC-40824AC^2\\
&{}-38988B^3+103032B^2C-90396BC^2+81648C^3\\
&{}-78732A^2+708588AB-708588AC\\
&{}-1594323B^2+3188646BC-1594323C^2.
\end{align*}
The polynomial $\mathcal D$ is irreducible in $\Q[A,B,C]$.

Define the following primitive polynomials:
\begin{align*}
\mathcal P_{11}:={}&
 -16ABC+42AC^2+12B^3-32B^2C+39BC^2-99C^3\\
&{}+108A^2-918AB+810AC+1944B^2-3402BC+1458C^2,
\end{align*}
\begin{align*}
\mathcal P_{12}:={}&
 12AC^3-8B^2C^2-6BC^3-18C^4\\
&{}+72A^2C-72AB^2-720ABC+1512AC^2\\
&{}+612B^3-1134B^2C+648BC^2-2430C^3\\
&{}+2916A^2-26244AB+26244AC\\
&{}+59049B^2-118098BC+59049C^2,
\end{align*}
\begin{align*}
\mathcal P_{13}:={}&
 6AC^4-4B^2C^3-3BC^4-9C^5\\
&{}+72A^2C^2-216AB^2C+342ABC^2-378AC^3\\
&{}+108B^4-144B^3C+243B^2C^2-648BC^3+729C^4\\
&{}+216A^3-1728A^2B+1296A^2C\\
&{}+2484AB^2-1296ABC-972AC^2\\
&{}+4131B^3-20169B^2C+26973BC^2-10935C^3,
\end{align*}
\begin{align*}
\mathcal P_{21}:={}&
 4AC^2-4B^2C+6BC^2-18C^3\\
&{}+24A^2-192AB+198AC+330B^2-495BC-27C^2,
\end{align*}
and
\begin{align*}
\mathcal P_{31}:={}&
 -4BC^2+12C^3+72AB-72AC\\
&{}-288B^2+396BC+108C^2+1458A-6561B+6561C.
\end{align*}
Then, on $\mathcal D\neq0$, the first coefficient of the $R$-matrix is
\begin{equation*}
 R_1|_S
 =
 \frac1{\mathcal D}
 \begin{pmatrix}
 -18\mathcal P_{11}&6\mathcal P_{12}&-4\mathcal P_{13}\\
 -18\mathcal P_{21}&36\mathcal P_{11}&6\mathcal P_{12}\\
 -3\mathcal P_{31}&-18\mathcal P_{21}&-18\mathcal P_{11}
 \end{pmatrix}.
\end{equation*}

\section*{Acknowledgements}
The authors thank Qingsheng Zhang for developing the code used in this work and for helpful discussions.

\section*{Conflicts of interest}
None.

\section*{Financial support}
The first author was partially supported by NSFC 12225101.

\bibliographystyle{amsalpha}
\bibliography{references_P2}

@article{A_mathematical_theory_of_quantum_cohomology,
  author  = {Ruan, Yongbin and Tian, Gang},
  title   = {A mathematical theory of quantum cohomology},
  journal = {Journal of Differential Geometry},
  volume  = {42},
  number  = {2},
  year    = {1995},
  pages   = {259--367},
  doi     = {10.4310/jdg/1214457234}
}

@book{Analytic_Combinatorics,
  author    = {Flajolet, Philippe and Sedgewick, Robert},
  title     = {Analytic Combinatorics},
  publisher = {Cambridge University Press},
  address   = {Cambridge},
  year      = {2009},
  doi       = {10.1017/CBO9780511801655},
  note      = {\url{https://ac.cs.princeton.edu/home/}}
}

@article{Arnold_conjecture_and_Gromov-Witten_invariant,
  author  = {Fukaya, Kenji and Ono, Kaoru},
  title   = {{Arnold} conjecture and {Gromov--Witten} invariant},
  journal = {Topology},
  volume  = {38},
  number  = {5},
  year    = {1999},
  pages   = {933--1048},
  doi     = {10.1016/S0040-9383(98)00042-1}
}

@article{Asymptotic_of_enumerative_invariants_in_CP2,
  author  = {Tian, Gang and Wei, Dongyi},
  title   = {Asymptotic of Enumerative Invariants in {${\mathbb{C}}P^2$}},
  journal = {Peking Mathematical Journal},
  volume  = {1},
  number  = {2},
  year    = {2018},
  pages   = {125--140},
  doi     = {10.1007/s42543-018-0004-4}
}

@article{Combinatorics_of_the_modular_group_II_the_Kontsevich_integrals,
  author  = {Itzykson, Claude and Zuber, Jean-Bernard},
  title   = {Combinatorics of the modular group {II}: the {Kontsevich} integrals},
  journal = {International Journal of Modern Physics A},
  volume  = {7},
  number  = {23},
  year    = {1992},
  pages   = {5661--5705},
  doi     = {10.1142/S0217751X92002581}
}

@incollection{Geometry_of_2D_topological_field_theories,
  author    = {Dubrovin, Boris},
  title     = {Geometry of {2D} topological field theories},
  editor    = {Francaviglia, Mauro and Greco, Silvio},
  booktitle = {Integrable Systems and Quantum Groups},
  series    = {Lecture Notes in Mathematics},
  volume    = {1620},
  publisher = {Springer},
  address   = {Berlin},
  year      = {1996},
  pages     = {120--348},
  doi       = {10.1007/BFb0094793}
}

@article{Gromov-Witten_classes_quantum_cohomology_and_enumerative_geometry,
  author  = {Kontsevich, Maxim and Manin, Yuri},
  title   = {{Gromov--Witten} classes, quantum cohomology, and enumerative geometry},
  journal = {Communications in Mathematical Physics},
  volume  = {164},
  number  = {3},
  year    = {1994},
  pages   = {525--562},
  doi     = {10.1007/BF02101490}
}

@article{Gromov-Witten_invariants_and_quantization_of_quadratic_hamiltonians,
  author  = {Givental, Alexander B.},
  title   = {{Gromov--Witten} invariants and quantization of quadratic {Hamiltonians}},
  journal = {Moscow Mathematical Journal},
  volume  = {1},
  number  = {4},
  year    = {2001},
  pages   = {551--568},
  doi     = {10.17323/1609-4514-2001-1-4-551-568}
}

@incollection{Gromov-Witten_invariants_of_general_symplectic_manifolds,
  author    = {Siebert, Bernd},
  title     = {Symplectic {Gromov--Witten} invariants},
  editor    = {Hulek, Klaus and Reid, Miles and Peters, Chris and Catanese, Fabrizio},
  booktitle = {New Trends in Algebraic Geometry},
  series    = {London Mathematical Society Lecture Note Series},
  volume    = {264},
  publisher = {Cambridge University Press},
  address   = {Cambridge},
  year      = {1999},
  pages     = {375--424},
  doi       = {10.1017/CBO9780511721540.016}
}

@article{Higher_genus_symplectic_invariants_and_sigma_model_coupled_with_gravity,
  author  = {Ruan, Yongbin and Tian, Gang},
  title   = {Higher genus symplectic invariants and sigma models coupled with gravity},
  journal = {Inventiones Mathematicae},
  volume  = {130},
  number  = {3},
  year    = {1997},
  pages   = {455--516},
  doi     = {10.1007/s002220050192},
  note    = {arXiv:alg-geom/9601005}
}

@article{Hodge_integrals_and_Gromov-Witten_theory,
  author  = {Faber, Carel and Pandharipande, Rahul},
  title   = {Hodge integrals and {Gromov--Witten} theory},
  journal = {Inventiones Mathematicae},
  volume  = {139},
  number  = {1},
  year    = {2000},
  pages   = {173--199},
  doi     = {10.1007/s002229900028}
}

@article{Intersection_theory_on_M_14_and_elliptic_Gromov-Witten_invariants,
  author  = {Getzler, Ezra},
  title   = {Intersection theory on {$\overline{\mathcal{M}}_{1,4}$} and elliptic {Gromov--Witten} invariants},
  journal = {Journal of the American Mathematical Society},
  volume  = {10},
  number  = {4},
  year    = {1997},
  pages   = {973--998},
  doi     = {10.1090/S0894-0347-97-00246-4}
}

@book{J-holomorphic_curves_and_quantum_cohomology,
  author    = {McDuff, Dusa and Salamon, Dietmar A.},
  title     = {{$J$}-Holomorphic Curves and Quantum Cohomology},
  series    = {University Lecture Series},
  volume    = {6},
  publisher = {American Mathematical Society},
  address   = {Providence, RI},
  year      = {1994},
  doi       = {10.1090/ulect/006}
}

@book{Mirror_symmetry,
  author    = {Hori, Kentaro and Katz, Sheldon and Klemm, Albrecht and Pandharipande, Rahul and Thomas, Richard and Vafa, Cumrun and Vakil, Ravi and Zaslow, Eric},
  title     = {Mirror symmetry},
  series    = {Clay Mathematics Monographs},
  volume    = {1},
  publisher = {American Mathematical Society and Clay Mathematics Institute},
  address   = {Providence, RI and Cambridge, MA},
  year      = {2003},
  isbn      = {978-0-8218-2955-4}
}

@book{Mirror_symmetry_and_algebraic_geometry,
  author    = {Cox, David A. and Katz, Sheldon},
  title     = {Mirror symmetry and algebraic geometry},
  series    = {Mathematical Surveys and Monographs},
  volume    = {68},
  publisher = {American Mathematical Society},
  address   = {Providence, RI},
  year      = {1999},
  doi       = {10.1090/surv/068}
}

@misc{NIST_DLMF,
  key          = {{\relax DLMF}},
  title        = {{NIST Digital Library of Mathematical Functions}},
  howpublished = {\url{https://dlmf.nist.gov/}, Release 1.2.7 of 2026-06-15},
  url          = {https://dlmf.nist.gov/},
  note         = {F.~W.~J. Olver, A.~B. Olde Daalhuis, D.~W. Lozier,
                  B.~I. Schneider, R.~F. Boisvert, C.~W. Clark,
                  B.~R. Miller, B.~V. Saunders, H.~S. Cohl,
                  and M.~A. McClain, eds.}
}

@misc{On_asymptotic_behavior_of_GW_invariants,
  author       = {Zinger, Aleksey},
  title        = {On asymptotic behavior of {GW}-invariants},
  howpublished = {Informal note},
  year         = {2013},
  note         = {Dated 23 September 2013, \url{https://www.math.stonybrook.edu/~azinger/research/GWest0.pdf}}
}

@article{On_degree_zero_elliptic_orbifold_Gromov-Witten_invariants,
  author  = {Tseng, Hsian-Hua},
  title   = {On {Degree-0} elliptic orbifold {Gromov--Witten} invariants},
  journal = {International Mathematics Research Notices},
  volume  = {2011},
  number  = {11},
  year    = {2011},
  pages   = {2444--2468},
  doi     = {10.1093/imrn/rnq158}
}

@article{Pseudo_holomorphic_curves_in_symplectic_manifolds,
  author  = {Gromov, Mikhail},
  title   = {Pseudo holomorphic curves in symplectic manifolds},
  journal = {Inventiones Mathematicae},
  volume  = {82},
  number  = {2},
  year    = {1985},
  pages   = {307--347},
  doi     = {10.1007/BF01388806}
}

@incollection{Quantum_intersection_rings,
  author    = {Di Francesco, Philippe and Itzykson, Claude},
  title     = {Quantum intersection rings},
  editor    = {Dijkgraaf, Robbert H. and Faber, Carel F. and van der Geer, Gerard B. M.},
  booktitle = {The Moduli Space of Curves},
  series    = {Progress in Mathematics},
  volume    = {129},
  publisher = {Birkh{\"a}user Boston},
  address   = {Boston, MA},
  year      = {1995},
  pages     = {81--148},
  doi       = {10.1007/978-1-4612-4264-2_4}
}

@article{Recursions_and_asymptotics_of_intersection_numbers,
  author  = {Liu, Kefeng and Mulase, Motohico and Xu, Hao},
  title   = {Recursions and asymptotics of intersection numbers},
  journal = {International Journal of Mathematics},
  volume  = {27},
  number  = {9},
  year    = {2016},
  pages   = {1650072},
  doi     = {10.1142/S0129167X16500725}
}

@article{Relations_on_Mgn_via_3-spin_structures,
  author  = {Pandharipande, Rahul and Pixton, Aaron and Zvonkine, Dimitri},
  title   = {Relations on {$\overline{\mathcal{M}}_{g,n}$} via {3}-spin structures},
  journal = {Journal of the American Mathematical Society},
  volume  = {28},
  number  = {1},
  year    = {2015},
  pages   = {279--309},
  doi     = {10.1090/S0894-0347-2014-00808-0}
}

@incollection{Some_conjectures_on_the_asymptotic_behavior_of_Gromov-Witten_invariants,
  author    = {Zinger, Aleksey},
  title     = {Some conjectures on the asymptotic behavior of {Gromov--Witten} invariants},
  editor    = {Ji, Lizhen and Wu, Baosen and Yau, Shing-Tung},
  booktitle = {Handbook for Mirror Symmetry of Calabi--Yau and Fano Manifolds},
  series    = {Advanced Lectures in Mathematics},
  volume    = {47},
  publisher = {Higher Education Press and International Press},
  address   = {Beijing and Somerville, MA},
  year      = {2019},
  pages     = {523--550},
  note      = {arXiv:1610.02971v2}
}

@article{The_intrinsic_normal_cone,
  author  = {Behrend, Kai and Fantechi, Barbara},
  title   = {The intrinsic normal cone},
  journal = {Inventiones Mathematicae},
  volume  = {128},
  number  = {1},
  year    = {1997},
  pages   = {45--88},
  doi     = {10.1007/s002220050136}
}

@article{The_singularity_of_Kontsevichs_solution_for_QHCP2,
  author  = {Guzzetti, Davide},
  title   = {The Singularity of {Kontsevich}'s Solution for {$\mathrm{QH}^*(\mathrm{CP}^2)$}},
  journal = {Mathematical Physics, Analysis and Geometry},
  volume  = {8},
  number  = {1},
  year    = {2005},
  pages   = {41--58},
  doi     = {10.1007/s11040-004-0936-z}
}

@article{The_structure_of_2D_semi-simple_field_theories,
  author  = {Teleman, Constantin},
  title   = {The structure of {2D} semi-simple field theories},
  journal = {Inventiones Mathematicae},
  volume  = {188},
  number  = {3},
  year    = {2012},
  pages   = {525--588},
  doi     = {10.1007/s00222-011-0352-5}
}

@article{Virtual_moduli_cycles_and_Gromov-Witten_invariants_of_algebraic_varieties,
  author  = {Li, Jun and Tian, Gang},
  title   = {Virtual moduli cycles and {Gromov--Witten} invariants of algebraic varieties},
  journal = {Journal of the American Mathematical Society},
  volume  = {11},
  number  = {1},
  year    = {1998},
  pages   = {119--174},
  doi     = {10.1090/S0894-0347-98-00250-1}
}

@incollection{Virtual_moduli_cycles_and_Gromov-Witten_invariants_of_general_symplectic_manifolds,
  author    = {Li, Jun and Tian, Gang},
  title     = {Virtual moduli cycles and {Gromov--Witten} invariants of general symplectic manifolds},
  editor    = {Stern, Ronald J.},
  booktitle = {Topics in Symplectic 4-Manifolds (Irvine, CA, 1996)},
  series    = {First International Press Lecture Series},
  volume    = {1},
  publisher = {International Press},
  address   = {Cambridge, MA},
  year      = {1998},
  pages     = {47--83},
  mrnumber  = {1635695},
  note      = {arXiv:alg-geom/9608032}
}

\end{document}